\newcommand{\autorefcheckize}[1]{%
  \expandafter\let\csname @@\string#1\endcsname#1%
  \expandafter\DeclareRobustCommand\csname relax\string#1\endcsname[1]{%
    \csname @@\string#1\endcsname{##1}\wrtusdrf{##1}}%
  \expandafter\let\expandafter#1\csname relax\string#1\endcsname
}
\theoremstyle{plain}
\newtheorem{theorem}{Theorem}[section]
\newaliascnt{lem}{theorem}
\newtheorem{lem}[lem]{Lemma}
\newaliascnt{cor}{theorem}
\newtheorem{cor}[cor]{Corollary}
\newaliascnt{prop}{theorem}
\theoremstyle{remark}
\newtheorem{rem}{Remark}[section]
\newtheorem*{claim}{Claim}
\theoremstyle{definition}
\newtheorem{eg}{Example}[section]
\numberwithin{equation}{section}
\newcommand{\abs}[1]{\left\lvert#1\right\rvert}
\newcommand{\set}[1]{\left\{#1\right\}}
\newcommand{\hin}[2]{\left\langle#1,#2\right\rangle}
\newcommand*{\Rmn}[1]{\uppercase\expandafter{\romannumeral#1}}
\newcommand*{\To}{\longrightarrow}
\newcommand*{\dif}{\mathop{}\!\mathrm{d}}
\DeclareMathOperator{\Div}{div}
\journal{XXX}
\begin{document}

\begin{frontmatter}

\title{Mean curvature flow of surfaces in a hyperk\"ahler $4$-manifold \tnoteref{QS}}

\author[whu1,whu2]{Hongbing Qiu}
\ead{hbqiu@whu.edu.cn}

\author[whu1,whu2]{Linlin Sun \corref{sll1}}
\ead{sunll@whu.edu.cn}



\tnotetext[QS]{This work is partilly supported by NSFC (Nos. 11771339, 11801420), Fundamental Research Funds for the Central Universities (Nos. 2042019kf0198, 2042018kf0044) and the Youth Talent Training Program of Wuhan University. The first author would like to express his sincere gratitude to Professor Y. L. Xin who brought the related problem to the author, he also thanks Professor Tobias H. Colding for his invitation, to MIT for their hospitality. The authors thank Professors Qun Chen and Jixiang Fu for their suggestions and support. They also thank Dr. Hui Liu, Sheng Rao and Jun Sun for helpful discussions.}
\address[whu1]{School of Mathematics and Statistics, Wuhan University, Wuhan 430072, China}
\address[whu2]{Hubei Key Laboratory of Computational Science, Wuhan University, Wuhan, 430072, China
}

\cortext[sll1]{Corresponding author.}

\begin{abstract}
In this paper, we firstly prove that every hyper-Lagrangian submanifold $L^{2n} (n > 1)$ in a hyperk\"ahler $4n$-manifold  is a complex Lagrangian submanifold. Secondly, we 
demonstrate an optimal rigidity theorem with the condition on the complex phase map of self-shrinking surfaces in $\mathbb{R}^4$. Last but not least, by using the previous rigidity result, we show that the mean curvature flow from a closed surface with the image of the complex phase map contained in $\mathbb{S}^2\setminus \overline{\mathbb{S}}^{1}_{+}$ in a hyperk\"ahler $4$-manifold does not develop any Type \Rmn{1} singularity. 

\end{abstract}

\begin{keyword}
 hyper-Lagrangian\sep self-shrinker\sep rigidity\sep mean curvature flow\sep singularity

 \MSC[2020] 53E10, 53C24, 53C26

\end{keyword}

\end{frontmatter}


\section{Introduction}

Let $\widetilde{M}$ be a closed $m$-dimensional differential  manifold and  $(N, h)$ be an $\bar{n}$-dimensional Riemannian manifold which can be embedded into some Euclidean space. The mean curvature flow (MCF) in $N$ is a smooth one-parameter family of immersions $F_t = F(\cdot, t): {\widetilde{M}}^m \to N^{\bar{n}}$ with the corresponding image $\widetilde{M}_t = F_t(\widetilde{M})$ such that 
\begin{align}\label{ODE-add}
\begin{cases}
\frac{\partial}{\partial t}F(x,t)=\mathbf{H}(x,t), &(x,t)\in \widetilde{M}\times(0,T);\\
F(x,0)=F_0(x),&x\in \widetilde{M},
\end{cases}
\end{align}
is satisfied, where $\mathbf{H}(x, t)$ is the mean curvature vector of the isometric immersion  $\widetilde{M}_t$ in $N$ at $F(x, t)$ in $N^{\bar{n}}$. The  MCF \eqref{ODE-add} is a (degenerate) quasilinear parabolic evolution equation. By using the DeTurck's  trick (cf. \cite{DeT83}), one can prove that the MCF \eqref{ODE-add} has a smooth solution for short time interval $[0,T)$. Moreover, the maximum existence time $T$ satisfying (cf. \cite[Theorem 8.1]{Hui84})
\begin{align*}
    \limsup_{t\to T}\max_{\widetilde{M}_t}\abs{\mathbf{B}}=\infty,
\end{align*}
where $\mathbf{B}(x,t)$ is the second fundamental form of the isometric immersion $\widetilde{M}_t$ in $N$ at $F(x,t)$.
There are many significant works on MCF, see the references (not exhaustive): \cite{BreCho19, CheTia00, Eck95,Eck13,EckHui89,EckHui91, HanSun12,HasKle17,Hui84, Hui86, Hui90, Hui93, HuiSin99, HuiSin09, Ilm94,Ilm95, LeSes10, Smo12, SmoWan02, Wan02, Wan03, Wan11a, Whi05, Xin08} and the references therein.

Brakke \cite{Bra78} firstly studied the motion of a submanifold moving by its mean curvature from the viewpoint of geometric measure theory. In Huisken's seminal paper \cite{Hui84}, he showed that the closed convex hypersurfaces in Euclidean space $\mathbb{R}^{m+1} (m > 1)$ contracts to a single point under the MCF in finite time and the normalized  flow (area is fixed) converges to a sphere of the same area in infinite time. Later, Huisken \cite{Hui86} generalized his results to closed and uniformly convex hypersurfaces in a complete Riemannian manifold with bounded geometry. As time evolves, the MCF may develop singularities which can be classified as Type \Rmn{1} and Type \Rmn{2} according to the blow up rate of the second fundamental form with respect to time $t$. And Huisken \cite{Hui90} proved that after appropriate rescaling near the Type \Rmn{1} singularity the hypersurfaces converge to a self-similar solution of the MCF.  

 In the past twenty years, the MCF of higher codimension has made much progress. And symplectic MCF and Lagrangian MCF are two important class among them. Chen-Li \cite{CheLi01}  studied the symplectic MCF from a closed symplectic surface in a K\"ahler-Einstein $4$-manifold, by establishing a new monotonicity formula, and using blow up argument, they proved that the MCF has no Type \Rmn{1} singularity if the initial symplectic surface is closed in a  K\"ahler-Einstein surface with nonnegative scalar curvature. Almost at the same time, Wang \cite{Wan01} demonstrated the same conclusion by removing the condition on the curvature of the ambient manifold. Smoczyk \cite{Smo96} showed that the Lagrangian condition is preserved by the MCF when the ambient space is a Calabi-Yau $2n$-manifold (which is a closed $2n$-dimensional Riemannian manifold with holonomy contained in $\mathrm{SU}(n)$).  Afterwards, Wang \cite{Wan01} observed that almost calibrated Lagrangian submanifolds in a Calabi-Yau manifold can not develop Type \Rmn{1} singularities.  Chen-Li \cite{CheLi04} manifested that in this setting the tangent cone of the MCF at a singular point $(X_0, T)$ (here $T$ is the first blow up time of the MCF) is an integer rectifiable stationary Lagrangian varifold. Furthermore, Neves \cite{Nev07} studied finite time singularities for zero-Maslov class Lagrangian submanifolds in $\mathbb{C}^n$, a more general  condition than being almost calibrated. As a consequence, he showed that the Lagrangian MCF with zero-Maslov class does not develop any  Type \Rmn{1} singularity.  On the other hand, self-shrinkers are Type \Rmn{1} singularity models of the MCF, and there is a multitude  of excellent work on the classification and uniqueness problem for self-shrinkers (see e.g. \cite{AbrLan86, CaoLi13, ChaCheYua12, CheQiu16, ChePen15,CheOga16, CheWei15, ColMin12, Din18, DinXin14, DinXin14a, DinXinYan16, GuaZhu17, GuaZhu18, HuaWan11, LeSes11, Smo05, Son14, Wan11, Wan14, Wan16}). 

\vspace{2ex}

In this paper, we shall focus on the case where the ambient space is a hyperk\"ahler manifold.  A   hyperk\"ahler $4n$-manifold $M$  is a Riemannian manifold with holonomy contained in $\mathrm{Sp}(n)$.  It admits a 2-sphere-family of complex structures $J$ and the associated holomorphic symplectic form $\Omega_J \in \Omega^{2, 0}(M,J)$.  Leung-Wan \cite{LeuWan07} firstly introduced the concept of hyper-Lagrangian manifolds which is a generalization of complex Lagrangian submanifolds: A submanifold $L^{2n} \subset M^{4n}$ is called \emph{ hyper-Lagrangian} if each tangent space $T_xL \subset T_xM$ is a complex Lagrangian subspace with respect to  $\Omega_{J(x)}$ with varying $J(x) \in \mathbb{S}^2$. There are many restrictions for the hyper-Lagrangian submanifold, e.g., every   hyper-Lagrangian submanifold is a K\"ahler manifold with holomorphic normal bundle (cf. \cite[Corollary 4.2]{LeuWan07}). One can check that every  oriented surface immersed in a $4$-hyperk\"ahler manifold is automatically hyper-Lagrangian. Unfortunately, up to now, we do not know any nontrivial examples of hyper-Lagrangian submanifolds $L^{2n}$ in $M^{4n}$ for $n>1$.  Therefore, it is very important for us to  construct nontrivial examples of these submanifolds. Along this direction, we give the following restriction for the hyper-Lagrangian submanifold.

\begin{theorem}\label{thm:main1}
Every  hyper-Lagrangian submanifold $L^{2n} (n>1)$ in a hyperk\"ahler $4n$-manifold is a complex Lagrangian submanifold. 
\end{theorem}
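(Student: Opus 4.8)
The plan is to encode the hyper-Lagrangian condition into a single algebraic identity among the pulled-back K\"ahler forms and then differentiate. Fix a parallel quaternionic triple of complex structures $J_1,J_2,J_3$ on $M$ with K\"ahler forms $\omega_1,\omega_2,\omega_3$; every compatible complex structure is $J_\phi=\sum_i\phi_iJ_i$ for some $\phi=(\phi_1,\phi_2,\phi_3)\in\mathbb{S}^2$, and if $\{\phi,e,f\}$ is a positively oriented orthonormal frame of $\mathbb{R}^3$ then the holomorphic symplectic form of $J_\phi$ is $\Omega_{J_\phi}=\omega_e+\sqrt{-1}\,\omega_f$, where $\omega_v:=\sum_iv_i\omega_i$. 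Writing $\alpha_i:=F^*\omega_i$ for the immersion $F\colon L\to M$, the three $\alpha_i$ are closed $2$-forms on $L$ because the $\omega_i$ are parallel, hence closed; this closedness is the only input I need from the hyperk\"ahler structure.

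First I would record what the hyper-Lagrangian hypothesis says pointwise. At each $x$ the tangent space $T_xL$ is complex Lagrangian for $J_{\phi(x)}$, which means $\Omega_{J_{\phi(x)}}$ vanishes on $T_xL$, i.e.\ $\omega_v|_{T_xL}=0$ for every $v\perp\phi(x)$, while $\omega_{\phi(x)}|_{T_xL}$ is the nondegenerate induced K\"ahler form. Thus the linear map $v\mapsto\omega_v|_{T_xL}$ has kernel exactly $\phi(x)^{\perp}$, so it equals $v\mapsto\hin{v}{\phi(x)}\sigma_x$ with $\sigma_x:=\omega_{\phi(x)}|_{T_xL}$. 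Evaluating at $v=e_i$ yields the key identity
\begin{equation}\label{eq:keyid}
\alpha_i=\phi_i\,\sigma\qquad(i=1,2,3),
\end{equation}
where $\sigma:=\sum_i\phi_i\alpha_i$ is the induced K\"ahler form and $\sum_i\phi_i^2=1$. Here $\sigma$ is smooth, being the induced K\"ahler form, and then $\phi_i=(\alpha_i\wedge\sigma^{n-1})/\sigma^n$ is smooth because $\sigma^n$ is a nonvanishing volume form; so the complex phase map $\phi$ is smooth.

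Next I would differentiate \eqref{eq:keyid}. Using $\dif\alpha_i=0$ together with $\sum_i\phi_i\,\dif\phi_i=\tfrac12\,\dif\big(\sum_i\phi_i^2\big)=0$, I obtain
\begin{equation*}
\dif\sigma=\sum_i\dif\phi_i\wedge\alpha_i=\Big(\sum_i\phi_i\,\dif\phi_i\Big)\wedge\sigma=0,
\end{equation*}
so the induced K\"ahler form is symplectic, and then $0=\dif\alpha_i=\dif\phi_i\wedge\sigma+\phi_i\,\dif\sigma$ collapses to
\begin{equation}\label{eq:lef}
\dif\phi_i\wedge\sigma=0\qquad(i=1,2,3).
\end{equation}

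Finally I would invoke symplectic linear algebra. For a symplectic form $\sigma$ on a $2n$-dimensional space the Lefschetz map $\beta\mapsto\beta\wedge\sigma$ from $1$-forms to $3$-forms is injective precisely when $n\ge2$ (in Darboux coordinates $\dif x_1\wedge\sigma\ne0$ as soon as a second symplectic pair is present). Hence for $n>1$ equation \eqref{eq:lef} forces $\dif\phi_i=0$, so $\phi$ is locally constant and, $L$ being connected, constant; the complex structure $J_\phi$ is then fixed and $L$ is complex Lagrangian with respect to it. The crux of the argument---and the only place the hypothesis $n>1$ enters---is this Lefschetz injectivity; for $n=1$ the $3$-form in \eqref{eq:lef} lives on a surface and vanishes identically, which is exactly why the phase of a surface in a hyperk\"ahler $4$-manifold may vary and why the surface case requires the separate analysis carried out in the remainder of the paper.
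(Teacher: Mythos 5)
Your proof is correct, and it takes a genuinely different route from the paper's. The paper argues extrinsically through the second fundamental form: it first shows (Lemmas 3.1 and 3.2) that the hyper-Lagrangian condition forces $J_{\alpha}\vert_{TL\To TL}=\lambda_{\alpha}\tilde J$ with $\nabla\tilde J=0$, then derives (Lemma 3.3, equation (3.2)) an identity expressing $Y(\lambda_{\alpha})\tilde JX$ through $\mathbf{B}$ and the shape operator, and finally traces that identity in two different ways to obtain $n\nabla\lambda_{\alpha}=\sum_{j}\mathbf{A}^{(J_{\alpha}e_j)^{\bot}}(\tilde Je_j)$ and $\nabla\lambda_{\alpha}=\sum_{j}\mathbf{A}^{(J_{\alpha}e_j)^{\bot}}(\tilde Je_j)$ (equations (3.4) and (3.5)), whence $(n-1)\nabla\lambda_{\alpha}=0$. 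You instead encode the hypothesis as the single identity $\alpha_i=\phi_i\sigma$ and use only that the $\alpha_i$ are closed and that wedging a $1$-form with a nondegenerate $2$-form is injective in dimension $2n\geq 4$. Your route is shorter and more elementary---no connection, no second fundamental form, no K\"ahler property of $L$---and it uses only closedness, not parallelism, of the $\omega_i$; it also makes transparent exactly where $n>1$ enters (Lefschetz injectivity fails on a surface because $3$-forms vanish there), which is the same phenomenon behind the paper's factor $n-1$. What the paper's heavier route buys is the machinery the rest of the paper runs on: equations (3.1)--(3.5) and the parallelism of $\tilde J$ and $\tilde J^{\bot}$ are reused in Sections 4 and 5 (for instance, (3.5) reappears in the proof of Theorem 5.2), so for the paper the extrinsic computation is not wasted effort.

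Two small points to tighten. First, your smoothness remark is circular as written: $\sigma$ is defined through $\phi$, so you cannot first declare $\sigma$ smooth and then deduce smoothness of $\phi_i=(\alpha_i\wedge\sigma^{n-1})/\sigma^n$. This is harmless, since smoothness of the phase map is implicitly built into the definition (the paper differentiates $\lambda_{\alpha}$ freely); if you want it from scratch, note that $\sum_i\alpha_i\otimes\alpha_i=\sigma\otimes\sigma$ is a smooth tensor and $\sigma$ is nowhere zero, so $\sigma$, hence $\phi$, is locally smooth up to a sign, which suffices for concluding $\dif\phi_i=0$. Second, the nondegeneracy of $\sigma$, which your Lefschetz step needs, deserves a line of proof: since $\omega_v\vert_{T_xL}=0$ for every $v\perp\phi(x)$, a dimension count shows that $J_e$ and $J_f$ carry $T_xL$ onto $T_x^{\bot}L$, hence $J_{\phi(x)}=\pm J_eJ_f$ preserves $T_xL$ and $\sigma_x\left(X,J_{\phi(x)}X\right)=\abs{X}^2>0$.
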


 The authors \cite{LeuWan07} showed that the \emph{complex phase map} $J: L \to \mathbb{S}^2, x\mapsto J(x)$ satisfies the evolving harmonic map heat flow along the MCF and the hyper-Lagrangian condition  is preserved under the mean curvature flow. Moreover, they demonstrated that the MCF does not develop Type \Rmn{1} singularities if the image of $J$ of the initial closed hyper-Lagrangian submanifold is contained in an open hemisphere.  When $n=1$, their results are in accordance with \cite[Theorem 4.7]{CheLi01} and  \cite[Theorem A]{Wan01}. In addition, the method of the proof of  \cite[Theorem 5.1]{LeuWan07} could also be applied to almost calibrated Lagrangian submanifolds in a Calabi-Yau manifold and arrived at the same conclusion we mention previously. Recently, Kunikawa-Takahashi \cite{KunTak20} proved the longtime existence and convergence under the condition that the initial hyper-Lagrangian submanifold has sufficiently small twistor energy.   Due to \autoref{thm:main1}, it suffices to study surfaces in a hyperk\"ahler $4$-manifold. Notice that closed hyperk\"ahler $4$-manifolds are coincide with Calabi-Yau $4$-manifolds since $\mathrm{Sp}(1)=\mathrm{SU}(2)$.
 
 As we mention above, the problem of  singularities is  an extremely crucial topic in the MCF, we  are mainly interested in the geometry of surfaces in a hyperk\"ahler $4$-manifold and the corresponding mean curvature flow.  Note that in a hyperk\"ahler $4$-manifold, one can check that a surface being symplectic is equivalent to the condition that the image of the complex phase map is contained in an open hemisphere while a surface being Lagrangian is equivalent to the condition that the image of the complex phase map is contained in a great circle. Moreover, a Lagrangian  surface being almost calibrated is equivalent to the condition that the image of the complex phase map is contained in an open half great circle. Recall that  when  Jost-Xin-Yang \cite{JosXinYan12} studied the regularity of harmonic maps into spheres $\mathbb{S}^n$, they assumed that the image of harmonic maps is contained in  $\mathbb{S}^n\setminus \overline{\mathbb{S}}^{n-1}_{+}$ which is the maximal open convex supporting subset of $\mathbb{S}^n$.  Accordingly, it is natural to restrict the image of $J$ in $ \mathbb{S}^2\setminus \overline{\mathbb{S}}^{1}_{+}$ when we consider the MCF from a closed  surface in a hyperk\"ahler $4$-manifold, which can be regarded as a generalization of  both symplectic and almost calibrated Lagrangian MCF in a hyperk\"ahler $4$-manifold. In order to study the existence of the Type \Rmn{1} singularity of this MCF, we firstly study the geometry of the Type \Rmn{1} singularity, namely, the self-shrinking surface in $\mathbb{R}^4$, and we find that its complex phase map is a generalized harmonic map (cf. \cite{CheJosWan15}). Based on this observation, by using integral method, we obtain
 
 \begin{theorem}\label{thm:rigid}
 Let $X:\Sigma \to \mathbb{R}^4$ be a complete proper self-shrinking surface in $\mathbb{R}^4$. If the image of $J: \Sigma \to \mathbb{S}^2$ is contained in $\mathbb{S}^2\setminus \overline{\mathbb{S}}^{1}_{+}$, then $\Sigma$ must be a plane. 
 
\end{theorem}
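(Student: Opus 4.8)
The plan is to exploit the self-shrinker equation together with the fact that the complex phase map $J:\Sigma\to\mathbb{S}^2$ is a generalized harmonic map, and then run a vanishing argument on the auxiliary function built from the image constraint. Since the image of $J$ lies in $\mathbb{S}^2\setminus\overline{\mathbb{S}}^1_+$, there is a fixed vector $\mathbf{a}\in\mathbb{S}^2$ (a pole of the removed closed half-equator) and a strictly positive lower bound so that the height function $u:=\hin{J}{\mathbf{a}}$ stays bounded away from the degenerate region; more precisely, the geometry of $\mathbb{S}^2\setminus\overline{\mathbb{S}}^1_+$ should give a convex function $\phi(J)$ on this set whose composition with a harmonic map is subharmonic. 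First I would identify, in the hyperk\"ahler $4$-surface setting, the precise analytic meaning of ``$J$ is a generalized harmonic map'': namely that along a self-shrinker the Euler--Lagrange operator for $J$ picks up a first-order drift term coming from the conformal weight $e^{-|X|^2/4}$, so that $J$ is harmonic with respect to the weighted (Gaussian) metric $e^{-|X|^2/4}\,g_\Sigma$ rather than $g_\Sigma$ itself.

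Once that is set up, the core computation is a \emph{Bochner-type / weighted integral identity}. I would compute the drift-Laplacian $\mathcal{L}u = \Delta u - \tfrac{1}{2}\hin{X}{\nabla u}$ of $u=\hin{J}{\mathbf{a}}$, using that $J$ is a weighted-harmonic map into $\mathbb{S}^2$; this yields an equation of the schematic form
\begin{align}\label{eq:drift-u}
\mathcal{L}u = |\nabla J|^2\,u + (\text{terms controlled by the image constraint}),
\end{align}
together with the companion identity for the energy density $e=\tfrac{1}{2}|\nabla J|^2$. The image condition $J(\Sigma)\subset\mathbb{S}^2\setminus\overline{\mathbb{S}}^1_+$ is exactly what guarantees a sign on the right-hand side of \eqref{eq:drift-u}, so that a suitable function of $u$ is a drift-subsolution. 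Integrating against the Gaussian weight $e^{-|X|^2/4}$ over the (complete, proper) surface $\Sigma$ and integrating by parts, the drift term is the one that makes the weighted divergence theorem applicable: properness plus the polynomial volume growth of self-shrinkers ensures the boundary terms at infinity vanish, so the weighted integral of $|\nabla J|^2$ (times a positive factor) must be zero.

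From $\nabla J\equiv 0$ I would conclude that $J$ is constant, i.e. $\Sigma$ has a single fixed compatible complex structure; this forces $\Sigma$ to be a \emph{complex} (equivalently, a special Lagrangian / calibrated) self-shrinking curve in the corresponding $\mathbb{C}^2$, and such a minimal-type self-shrinker with constant phase must be flat. The final step is to upgrade ``$\nabla J=0$, hence $\Sigma$ is calibrated'' to ``$\Sigma$ is a plane'': a constant complex phase makes $\Sigma$ a holomorphic (hence area-minimizing, hence minimal) self-shrinker, and a minimal self-shrinker in $\mathbb{R}^4$ satisfies $\mathbf{H}=0$ together with $\mathbf{H}=-\tfrac{1}{2}X^{\perp}$, giving $X^{\perp}=0$; a complete proper cone through the origin that is smooth is a plane.

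\textbf{The main obstacle.} The delicate point is the integration-by-parts / cut-off step: I must justify discarding the boundary-at-infinity contributions in the weighted integral. Completeness and properness give polynomial volume growth for self-shrinkers, but to push the Gaussian-weighted Stokes theorem through I expect to need a logarithmic cut-off together with an a priori bound on $|\nabla J|$ (or a Caccioppoli-type estimate absorbing the gradient term), and to control the curvature factor $|\nabla J|^2$ that couples the $u$-equation to the energy identity. Getting the \emph{sharp} (optimal) form of the hypothesis $\mathbb{S}^2\setminus\overline{\mathbb{S}}^1_+$ to feed exactly the right sign into \eqref{eq:drift-u} — rather than the weaker open-hemisphere condition already handled in \cite{LeuWan07} — is where the convex-geometry input on $\mathbb{S}^2$ and the analytic vanishing argument have to be matched precisely.
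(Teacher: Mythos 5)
Your overall framework---the drift-harmonic map equation $\tau(J)=\tfrac12\,\dif J(X^{\top})$ for self-shrinkers, the Gaussian-weighted Caccioppoli argument using properness (hence finite weighted area $\int_{\Sigma}e^{-|X|^2/4}<\infty$), and the endgame ($\dif J\equiv 0\Rightarrow \mathbf{H}\equiv 0\Rightarrow X^{\bot}=0\Rightarrow$ totally geodesic $\Rightarrow$ plane)---matches the paper's proof. But the core mechanism you propose for exploiting the hypothesis $J(\Sigma)\subset \mathbb{V}:=\mathbb{S}^2\setminus\overline{\mathbb{S}}^1_{+}$ has a genuine gap. You want a single height function $u=\langle J,\mathbf{a}\rangle$ (or a single convex function on $\mathbb{V}$) whose composition with the drift-harmonic map $J$ acquires a sign. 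No such function exists: $\mathbb{V}$ contains antipodal pairs (e.g.\ $(\pm 1,0,0)$), so it is not contained in any open hemisphere and admits no globally strictly convex function; the Jost--Xin--Yang convexity is available only on compact subsets of $\mathbb{V}$, whereas the image $J(\Sigma)$ of a noncompact $\Sigma$ may a priori accumulate on the removed half-circle. Hence your schematic drift equation for $u$ only has the needed sign under the open-hemisphere hypothesis, which is strictly weaker than the stated theorem (and is exactly the case already treated in the first half of the paper's proof).

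The paper fills this gap with a reduction step you are missing. Writing points of $\mathbb{V}$ via the planar projection as $(r\sin\varphi,r\cos\varphi,x_3)$ with $r\in(0,1]$ and a single-valued angle $\varphi\in(0,2\pi)$, the identity $\mathrm{Hess}\,\varphi=-r^{-1}(d\varphi\otimes \dif r+\dif r\otimes d\varphi)$ combined with drift-harmonicity of $J$ yields the exact (not merely signed) conservation law
\begin{align*}
\Div\left((r\circ J)^2\, e^{-|X|^2/4}\,\nabla(\varphi\circ J)\right)=0 .
\end{align*}
Since $\varphi\circ J$ is bounded, the routine cutoff argument with finite Gaussian area forces $\nabla(\varphi\circ J)\equiv 0$, i.e.\ $\varphi\circ J\equiv\varphi_0$. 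Only at this point does a height function appear: with $b_0=(\sin\varphi_0,\cos\varphi_0,0)$ one gets $\langle J,b_0\rangle=r\circ J>0$, so the image lies in an open hemisphere, and the hemisphere argument (the paper uses $\psi=1-\cos\rho$, for which $\Delta_{-X^{\top}/2}(\psi\circ J)=(\cos\rho)\,|\dif J|^2\geq 0$) finishes the proof. Note also that the integration-by-parts step you flagged as the main obstacle is in fact the easy part---linear cutoffs plus finite weighted area suffice, with no a priori bound on $|\dif J|$ needed; the genuinely hard ingredient is the angle-function reduction from $\mathbb{V}$ to a hemisphere, which your proposal does not supply.
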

 
 This theorem improves and generalizes the result of \cite{AreSun13}. We can give an example to illustrate that the above restriction on the image of $J$ is optimal (see \autoref{eg:cylinder} in section 5). Furthermore, we show that  if the image of the complex phase map of  the initial closed surface is contained in $\mathbb{S}^2\setminus \overline{\mathbb{S}}^{1}_{+}$, then the image of the complex phase map of the evolved surface is contained in some fixed compact subset of $\mathbb{S}^2\setminus \overline{\mathbb{S}}^{1}_{+}$ under the MCF.  Consequently, by using \autoref{thm:rigid} and applying the blow up analysis of MCFs, we prove the nonexistence of the Type \Rmn{1} singularity of the MCF.
 
 \begin{theorem}\label{thm:sing}
Let $\Sigma_0$ a closed surface immersed in hyperk\"ahler $4$-manifold $M$. Let $\Sigma_t \subset M (t\in [0, T) $ for some $T>0)$  be a family of  surfaces given by the mean curvature flow.  Suppose that  the image of the complex phase map $J: \Sigma_0 \to \mathbb{S}^2$
is contained in $\mathbb{S}^2\setminus \overline{\mathbb{S}}^{1}_{+}$, then the mean curvature flow has no Type \Rmn{1} singularity.
\end{theorem}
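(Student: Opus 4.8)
The plan is to argue by contradiction through a blow-up analysis at the hypothetical first Type~\Rmn{1} singular time $T$, reducing the problem to the rigidity result \autoref{thm:rigid}. The first step I would establish is a preservation property: along the MCF the complex phase map $J_t\colon\Sigma_t\to\mathbb{S}^2$ satisfies an evolving harmonic map heat flow (Leung--Wan), and since $\mathbb{S}^2\setminus\overline{\mathbb{S}}^1_+$ is the maximal open convex supporting subset of $\mathbb{S}^2$, I would pick a convex exhaustion function $\phi$ of this domain and compute $\left(\frac{\partial}{\partial t}-\Delta_{\Sigma_t}\right)\phi(J_t)$. The Hessian of $\phi$ contributes a favorable sign, while the curvature of $M$ contributes terms that stay bounded on the finite interval $[0,T)$ because $M$ is closed, so a maximum principle confines the image of $J_t$ to a fixed compact set $K\subset\mathbb{S}^2\setminus\overline{\mathbb{S}}^1_+$ for all $t\in[0,T)$.

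Next, assuming a Type~\Rmn{1} singularity forms at a point $X_0$ at time $T$, I would run Huisken's parabolic rescaling centered at $(X_0,T)$, dilating by $(2(T-t))^{-1/2}$ and setting $s=-\tfrac12\log(T-t)$. Because $M$ is hyperk\"ahler, the blow-up of the ambient geometry at $X_0$ is the flat $\mathbb{R}^4$ with its standard parallel hyperk\"ahler structure, so the $\mathbb{S}^2$-family of complex structures converges to the constant family. Using Huisken's monotonicity formula together with the Type~\Rmn{1} bound $\abs{\mathbf{B}}^2(T-t)\le C$, the rescaled flows subconverge smoothly to a complete, properly immersed self-shrinking surface $\Sigma_\infty\subset\mathbb{R}^4$ with bounded second fundamental form.

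The crucial observation is that the complex phase map depends only on the oriented tangent plane and is therefore invariant under the translations and dilations defining the rescaling; hence the complex phase map of $\Sigma_\infty$ takes values in the compact set $K\subset\mathbb{S}^2\setminus\overline{\mathbb{S}}^1_+$. Then \autoref{thm:rigid} forces $\Sigma_\infty$ to be a plane. On the other hand, at a genuine singular point the Gaussian density exceeds $1$, by the monotonicity formula and White's local regularity theorem, whereas a multiplicity-one plane has density exactly $1$. This contradiction shows that no Type~\Rmn{1} singularity can form.

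I expect the main obstacle to lie in two places. The first is the preservation step: constructing the correct convex function on $\mathbb{S}^2\setminus\overline{\mathbb{S}}^1_+$ and absorbing the ambient curvature terms in the evolution inequality, where the special geometry of this domain (the sphere with a closed half great circle removed) is exactly what makes a convex supporting function available, in the spirit of Jost--Xin--Yang. The second is verifying that the complex phase map descends to the blow-up limit with its image still inside $K$; this relies on the scale- and translation-invariance of the tangent-plane datum together with the convergence of the hyperk\"ahler structure of $M$ to its flat model on $\mathbb{R}^4$.
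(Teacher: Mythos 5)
Your overall strategy coincides with the paper's in its two main reductions: first confine the image of $J$ to a fixed compact subset $K\subset\mathbb{S}^2\setminus\overline{\mathbb{S}}^1_+$ by combining the Leung--Wan equation $\partial_t J=\tau(J)$ with a convex function in the spirit of Jost--Xin--Yang and the parabolic maximum principle; then blow up at a hypothetical Type~\Rmn{1} singularity and invoke \autoref{thm:rigid}. One correction on the first step: the evolution of $\phi\circ J$ contains no ambient curvature terms at all, since $\partial_t J=\tau(J)$ holds exactly; the chain rule gives $\left(\partial_t-\Delta\right)(\phi\circ J)=-\sum_i\mathrm{Hess}\,\phi\left(\dif J(e_i),\dif J(e_i)\right)\leq 0$. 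This matters: if there really were bounded forcing terms, the maximum principle would only yield $\phi\circ J\leq c+Ct$, and a drifting bound does not by itself keep the image inside a fixed compact subset of $\mathbb{V}$ (the image could drift toward the deleted half great circle, out of the region where $\phi$ is defined and convex). Note also that the paper needs a small topological claim (comparing sublevel sets of the convex function in $\bar U$ and in a slightly enlarged neighbourhood) together with a continuity-in-time argument to make the confinement rigorous on all of $[0,T)$; this should not be skipped.

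The genuine gap is in your final contradiction. After Huisken's parabolic rescaling at $(X_0,T)$, you conclude from \autoref{thm:rigid} that the limit $\Sigma_\infty$ is a plane, and contradict this with ``density $>1$ at a singular point, while a multiplicity-one plane has density exactly $1$.'' But nothing in your argument rules out that the smooth subconvergence occurs with multiplicity $k\geq 2$, or that the limit is a union of parallel planes: in that case every sheet is a plane, which is perfectly consistent with \autoref{thm:rigid}, and the Gaussian density of the limit is $k\geq 2>1$, which is perfectly consistent with White's theorem --- so no contradiction arises. Establishing multiplicity one for blow-up limits of immersed surfaces in higher codimension is not a routine step. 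The paper sidesteps this entirely by a different normalization of the blow-up: it rescales by $\varepsilon_k=\max_{t\leq t_k}\abs{\mathbf{B}}$, attained at points $(x_k,t_k)$, so that the rescaled flows satisfy $\abs{\mathbf{B}_k}\leq 1$ and $\abs{\mathbf{B}_k(x_k,0)}=1$ (the Type~\Rmn{1} hypothesis, together with the standard lower bound on the blow-up rate, makes $\varepsilon_k$ comparable to $(T-t_k)^{-1/2}$, so the limit is still a complete proper self-shrinker). The limit then carries a point with $\abs{\mathbf{B}_\infty}(p,0)=1$, and \autoref{thm:rigid} --- which forces the limit to be a totally geodesic plane --- gives an immediate contradiction regardless of multiplicity. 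You should either switch to this maximum-curvature rescaling or supply a genuine multiplicity-one argument; as written, the density step fails.
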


As a consequence, if the image of $J$ for the initial surface is contained in a great circle avoid a point, then the Lagrangian MCF has no Type \Rmn{1} singularity in a hyperk\"ahler $4$-manifold. The restriction on the image of $J$ is sharp in \autoref{thm:sing}, see \autoref{eg:calabi-tori} in section 5. 
When we consider that the ambient manifold is a  hyperk\"ahler $4$-manifold, \autoref{thm:sing} generalizes the corresponding results of \cite{LeuWan07, CheLi01, Wan01}. 

\vspace{2ex}

The article will be organized  as follows. We shall give some preliminaries in Section 2. In Section 3, we firstly give an equivalent condition of the hyper-Lagrangian,  from which it is easy to see that any surface in hyperk\"ahler 4-manifold is hyper-Lagrangian, then we prove that every hyper-Lagrangian submanifolds $L^{2n} (n > 1)$ in a hyperk\"ahler manifold $M^{4n}$ must be complex Lagrangian (\autoref{thm:main1}). Subsequently, we study the geometry of  surfaces in a hyperk\"ahler $4$-manifold in Section 4. Finally, in Section 5,  we demonstrate some rigidity theorems of self-shrinking surfaces and translating soliton surfaces in $\mathbb{R}^4$ (\autoref{thm:rigid}, \autoref{thm:soliton}), after that, we show that the MCF from a closed surface with the image of the complex phase map $J$ contained in $\mathbb{S}^2\setminus \overline{\mathbb{S}}^{1}_{+}$ does not develop any Type I singularity (\autoref{thm:sing}).


\section{Preliminaries}
In this section, we set some notations that will be used throughout the paper and recall some relevant definitions and results.

Let $M^{4n}$ be a $4n$-dimensional  hyperk\"ahler manifold, i.e., there exists two covariant constant anti-commutative almost complex structures $J_1, J_2$, i.e., $J_1, J_2$ are parallel with respect to the Levi-Civita connection and $J_1J_2=-J_2J_1$. Denote $J_3\coloneqq J_1J_2$, then the following quaternionic identities hold
\begin{align*}
J_1^2=J_2^2=J_3^2=J_1J_2J_3=-1. 
\end{align*}

Every $\mathrm{SO}(3)$ matrix preserves the quaternionic identities, i.e., $\set{\tilde J^{\alpha}\coloneqq\sum_{\beta=1}^3a_{\alpha\beta}J^{\beta}}$ satisfies the quaternionic identities
\begin{equation*}
\tilde J_1^2=\tilde J_2^2=\tilde J_3^2=\tilde J_1\tilde J_2\tilde J_3=-1.
\end{equation*}
In particular, for every unit vector $(a_1,a_2,a_3)\in\mathbb{R}^3$, we get a covariant constant almost complex structure $\sum_{\alpha=1}^3a_{\alpha}J_{\alpha}$, and this implies that $\left(M,\sum_{\alpha=1}^3a_{\alpha}J_{\alpha}\right)$ is a K\"ahler manifold.

\par

Let $\hat J=\sum_{\alpha=1}^3\lambda_{\alpha}J_{\alpha}$ be an almost complex structure on $M$. Let $\omega_{\hat J}$ be the K\"ahler form with respect to $\hat J$, then the associated symplectic $2$-form $\Omega_{\hat J}\in\Omega^{2,0}\left(M,\hat J\right)$ is given by
\begin{align*}
\Omega_{\hat J}=\omega_{K}+\sqrt{-1}\omega_{K\hat J},
\end{align*}
where $K=\sum_{\alpha=1}^3\mu_{\alpha}J_{\alpha}$ is an almost complex structure which is orthogonal to $\hat J$ in the sense that $\sum_{\alpha=1}^3\lambda_{\alpha}\mu_{\alpha}=0$. If $\hat J$ is parallel, then $\Omega_{\hat J}$ is holomorphic with respect to the covraiant constant almost complex structure $\hat J$.


Let $\omega_{\alpha}$ be the K\"ahler form associated with the almost complex structure $J_{\alpha}$, then $\left(M,J_1\right)$ is a K\"ahler manifold and 
\begin{align*}
\Omega_{J_1}=\omega_2+\sqrt{-1}\omega_3\in H^{2,0}\left(M,J_1\right)
\end{align*}
is the associated holomorphic symplectic $2$-form. We say that a submanifold $L^{2n}$ of $M^{4n}$ is complex Lagrangian if for some covariant constant complex structure $\hat J$ of $M$ such that the associated holomorphic symplectic $2$-form $\Omega_{\hat J}$ vanished everywhere on $L$. Without loss of generality, assume $\hat J=J_1$, then $L$ is a K\"ahler submanifold of the K\"ahler manifold $(M,J_1)$. In particular, $L$ is a minimal submanifold of $M$. Moreover, both $L\subset\left(M, J_2\right)$ and $L\subset\left(M, J_3\right)$ are  Lagrangian immersions.

We say that $L^{2n}$ is a hyper-Lagrangian submanifold of $M^{4n}$ if there is an almost complex structure $\hat J=\sum_{\alpha=1}^3\lambda_{\alpha}J_{\alpha}$ such that the associated symplectic $2$-form $\Omega_{\hat J}$ vanished everywhere on $L$.  The map 
\begin{align*}
J:L\To\mathbb{S}^2,\quad x\mapsto J(x)\coloneqq(\lambda_1,\lambda_2,\lambda_3)
\end{align*}
is called the complex phase map. In other words, $L$ is hyper-Lagrangian iff each $T_xL$ is a  complex Lagrangian subspace of $T_xM$. Here we say that $T_xL$ is a complex Lagrangian subspace of $T_xM$ if for some complex structure $\hat J=\sum_{\alpha=1}^3\lambda_{\alpha}J_{\alpha}$ we have
\begin{align*}
\bar g\left(K\cdot,\cdot\right)\vert_{T_xL}=0,
\end{align*}
for all almost complex structures $K=\sum_{\alpha=1}^3\mu_{\alpha}J_{\alpha}$ which are orthogonal to $\hat  J$. Therefore, $L$ is complex Lagrangian iff $L$ is hyper-Lagrangian with  constant complex phase map.

The complex phase map $J$ defines an almost complex structure $\tilde J=\sum_{\alpha=1}^3\lambda_{\alpha}J_{\alpha}\vert_{TL\To TL}$ on $L$ and an almost complex structure $\tilde J^{\bot}=\sum_{\alpha=1}^3\lambda_{\alpha}J_{\alpha}\vert_{T^{\bot}L\To T^{\bot}L}$ on $T^{\bot}L$. Denoted $\bar\nabla, \nabla$ and $\nabla^{\bot}$ by the Levi-Civita connections on $TM,TL$ and $T^{\bot}L$ respectively. Denoted $\bar R, R$ and $R^{\bot}$ by the Riemannian curvatures on $TM,TL$ and $T^{\bot}L$ respectively. For $V\in \Gamma(TL)$, let $\Delta_V:= \Delta + \left< V, \nabla \cdot \right>$, where $\Delta$ is the usual Laplacian operator with respect to $\nabla$.

\section{Every hyper-Lagrangian submanifold but surface  is complex Lagrangian}

In this section, we shall give a proof of \autoref{thm:main1}. In particular, every hyper-Lagrangian submanifold $L^{2n}$ in a hyperk\"ahler manifold $M^{4n}$ is minimal when $n>1$. 

First, we have the following
\begin{lem}\label{lem:hyper-Lagrangian}
$L$ is hyper-Lagrangian iff
\begin{align*}
J_{\alpha}\vert_{TL\To TL}=\lambda_{\alpha}\tilde J,\quad\alpha=1,2,3,
\end{align*}
iff
\begin{align*}
J_{\alpha}\vert_{T^{\bot}L\To T^{\bot}L}=\lambda_{\alpha}\tilde J^{\bot},\quad\alpha=1,2,3.
\end{align*}
\end{lem}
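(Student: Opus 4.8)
The plan is to reduce the statement to pointwise linear algebra on $T_xL$ and $T^{\bot}_xL$, so fix $x\in L$ and set $\lambda=J(x)=(\lambda_1,\lambda_2,\lambda_3)\in\mathbb{S}^2$ together with $\hat J=\sum_\alpha\lambda_\alpha J_\alpha$. First I would choose a right-handed orthonormal frame $(\lambda,\mu,\nu)$ of $\mathbb{R}^3$, so that $J_\mu\coloneqq\sum_\alpha\mu_\alpha J_\alpha$ and $J_\nu\coloneqq\sum_\alpha\nu_\alpha J_\alpha$ are complex structures orthogonal to $\hat J$ and every $K=\sum_\alpha c_\alpha J_\alpha$ orthogonal to $\hat J$ is a linear combination of $J_\mu$ and $J_\nu$. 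Writing $\omega_K(\cdot,\cdot)=\bar g(K\cdot,\cdot)$, the defining condition for $T_xL$ to be complex Lagrangian, namely $\bar g(K\cdot,\cdot)\vert_{T_xL}=0$ for all such $K$, is then equivalent to the two equations $\omega_\mu\vert_{T_xL}=0$ and $\omega_\nu\vert_{T_xL}=0$. Since $\omega_\mu(u,v)=\bar g(J_\mu u,v)$ vanishes for all $v\in T_xL$ exactly when the tangential part $(J_\mu u)^{\top}$ vanishes, this says precisely that $J_\mu\vert_{TL\To TL}=0$ and $J_\nu\vert_{TL\To TL}=0$.

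The first ``iff'' is now linear algebra. Consider the linear map $\Phi\colon\mathbb{R}^3\To\operatorname{End}(T_xL)$, $a\mapsto(J_a\cdot)^{\top}=\sum_\alpha a_\alpha\,J_\alpha\vert_{TL\To TL}$, and note that $\tilde J=\Phi(\lambda)$. By the previous paragraph, $L$ is hyper-Lagrangian at $x$ iff $\Phi$ vanishes on the plane $\lambda^{\bot}=\operatorname{span}\set{\mu,\nu}$, which is in turn equivalent to $\Phi(a)=\hin{a}{\lambda}\tilde J$ for every $a$; evaluating at $a=e_\alpha$ gives $J_\alpha\vert_{TL\To TL}=\lambda_\alpha\tilde J$, and conversely this identity forces $\Phi$ to kill $\lambda^{\bot}$. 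This proves the equivalence of the hyper-Lagrangian condition with the tangential system.

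For the normal system I would pass from $TL$ to $T^{\bot}L$ using that each $J_\alpha$ is a $\bar g$-isometry with $J_\alpha^2=-1$ and that $\dim T_xL=\dim T^{\bot}_xL=2n$. Indeed, $J_\mu\vert_{TL\To TL}=0$ means $J_\mu(T_xL)\bot T_xL$; by the equal-dimension count this forces $J_\mu(T_xL)=T^{\bot}_xL$, whence $J_\mu(T^{\bot}_xL)=J_\mu^2(T_xL)=T_xL\bot T^{\bot}_xL$, i.e. $J_\mu\vert_{T^{\bot}L\To T^{\bot}L}=0$; the reverse implication is symmetric. Thus the tangential vanishing of $J_\mu,J_\nu$ is equivalent to their normal vanishing, and repeating the linear-algebra argument of the second paragraph verbatim with the endomorphisms $J_\alpha\vert_{T^{\bot}L\To T^{\bot}L}$ and $\tilde J^{\bot}$ in place of $\Phi$ yields $J_\alpha\vert_{T^{\bot}L\To T^{\bot}L}=\lambda_\alpha\tilde J^{\bot}$.

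I expect the only genuinely delicate points to be the two structural facts that feed the linear algebra: that the complex structures orthogonal to $\hat J$ are exactly $\operatorname{span}\set{J_\mu,J_\nu}$ (so that the defining condition collapses to two scalar equations), and the equal-dimension swapping $J_\mu(T_xL)=T^{\bot}_xL$ that transfers vanishing between the tangent and normal bundles. Everything is pointwise, so no differentiation of the frame $(\lambda,\mu,\nu)$ is needed and the argument applies uniformly at each $x\in L$.
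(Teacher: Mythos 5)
Your proof is correct and takes essentially the same approach as the paper: the paper completes $\lambda$ to a matrix $A\in\mathrm{SO}(3)$ and observes that the hyper-Lagrangian condition forces the rotated structures $\tilde J_2,\tilde J_3$ to be block anti-diagonal with respect to $T_xM=T_xL\oplus T_x^{\bot}L$ (precisely your frame $(\lambda,\mu,\nu)$ and the vanishing of $J_\mu,J_\nu$ on $TL$ and $T^{\bot}L$), and then recovers $J_\alpha\vert_{TL\To TL}=\lambda_\alpha\tilde J$ by inverting the rotation, which is exactly your map $\Phi$. The only difference is one of detail: your dimension-swap argument $J_\mu(T_xL)=T_x^{\bot}L$ explicitly justifies the vanishing of the normal diagonal blocks, a point the paper's proof asserts without comment.
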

\begin{proof}
Under the orthogonal decomposition $T_xM=T_xL\oplus T_x^{\bot}L$, we write
\begin{align*}
J_{\alpha}=\begin{pmatrix}A_{\alpha}&B_{\alpha}\\
-B_{\alpha}^T&C_{\alpha}
\end{pmatrix},\quad 
\alpha=1,2,3.
\end{align*}
Let $A=(a_{\alpha\beta})_{1\leq \alpha,\beta\leq 3}\in\mathrm{SO}(3)$ where $\lambda_{\alpha}=a_{1\alpha}$. Set $\tilde J_{\beta}=\sum_{\alpha=1}^3a_{\beta\alpha}J_{\alpha}$. Since $L$ is hyper-Lagrangian, we get
\begin{align*}
\tilde J_{2}=\begin{pmatrix}0&\sum_{\alpha=1}^3a_{2\alpha}B_{\alpha}\\
-\sum_{\alpha=1}^3a_{2\alpha}B_{\alpha}^{T}&0
\end{pmatrix},\quad \tilde J_{3}=\begin{pmatrix}0&\sum_{\alpha=1}^3a_{3\alpha}B_{\alpha}\\
-\sum_{\alpha=1}^3a_{3\alpha}B_{\alpha}^{T}&0
\end{pmatrix},
\end{align*}
or equivalently,
\begin{align*}
A_{\alpha}=\lambda_{\alpha}\tilde J,\quad\alpha=1,2,3,
\end{align*}
which is also equivalent to 
\begin{align*}
C_{\alpha}=\lambda_{\alpha}\tilde J^{\bot},\quad\alpha=1,2,3.
\end{align*}
\end{proof}
\begin{rem}This Lemma claims that every surface immersed in a hyperk\"ahler $4$-manifold is automatically hyper-Lagrangian.
\end{rem}

The first restriction of hyper-Lagrangian submanifolds is the following (\cite[Corollary 4.2]{LeuWan07}).
\begin{lem}\label{lem:restriction1}
If $L$ is hyper-Lagrangian, then $\left(L,\tilde J\right)$ is a K\"ahler manifold with a holomorphic normal bundle.
\end{lem}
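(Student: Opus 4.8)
The plan is to reduce the statement to two parallelism identities: that $(L,\tilde J)$ is K\"ahler is equivalent to $\nabla\tilde J=0$, and the normal bundle is holomorphic once $\nabla^{\bot}\tilde J^{\bot}=0$ is supplemented by a curvature-type condition. First I would record the relevant block structure. By the complex Lagrangian characterization of Section 2 together with \autoref{lem:hyper-Lagrangian}, the pointwise complex structure $\hat J=\sum_{\alpha}\lambda_{\alpha}J_{\alpha}$ preserves both $T_xL$ and $T_x^{\bot}L$, restricting to $\tilde J$ and $\tilde J^{\bot}$ respectively, and in the notation of that lemma $A_{\alpha}=\lambda_{\alpha}\tilde J$, $C_{\alpha}=\lambda_{\alpha}\tilde J^{\bot}$. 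Recall also that $(L,g,\tilde J)$ is K\"ahler precisely when $\tilde J$ is parallel for the Levi-Civita connection $\nabla$.

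The core is a direct computation exploiting that each $J_{\alpha}$ is parallel on $M$ while the coefficients $\lambda_{\alpha}$ vary along $L$. For $X,Y\in\Gamma(TL)$ I would differentiate $\hat J Y=\tilde J Y$, using $\bar\nabla J_{\alpha}=0$ and the Gauss formula $\bar\nabla_XY=\nabla_XY+\mathbf B(X,Y)$, to obtain
\begin{align*}
\bar\nabla_X(\tilde J Y)=\sum_{\alpha}(X\lambda_{\alpha})J_{\alpha}Y+\tilde J\nabla_XY+\tilde J^{\bot}\mathbf B(X,Y).
\end{align*}
Taking tangential parts and inserting $A_{\alpha}=\lambda_{\alpha}\tilde J$ gives
\begin{align*}
(\nabla_X\tilde J)Y=\Big(\sum_{\alpha}\lambda_{\alpha}\,X\lambda_{\alpha}\Big)\tilde J Y=\tfrac12\,X\!\Big(\sum_{\alpha}\lambda_{\alpha}^2\Big)\tilde J Y=0,
\end{align*}
since $\sum_{\alpha}\lambda_{\alpha}^2\equiv1$ because $J$ maps into $\mathbb S^2$. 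Hence $\tilde J$ is parallel and $(L,\tilde J)$ is K\"ahler. The identical argument with the Weingarten formula $\bar\nabla_X\xi=-A_{\xi}X+\nabla^{\bot}_X\xi$ for $\xi\in\Gamma(T^{\bot}L)$, taking normal parts and using $C_{\alpha}=\lambda_{\alpha}\tilde J^{\bot}$, yields $(\nabla^{\bot}_X\tilde J^{\bot})\xi=\tfrac12\,X(\sum_{\alpha}\lambda_{\alpha}^2)\tilde J^{\bot}\xi=0$, so $\tilde J^{\bot}$ is parallel for the normal connection $\nabla^{\bot}$.

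Finally I would upgrade $\nabla^{\bot}\tilde J^{\bot}=0$ to holomorphicity of the normal bundle. Parallelism makes $\nabla^{\bot}$ a Hermitian connection compatible with $\tilde J^{\bot}$, so $\bar\partial:=(\nabla^{\bot})^{0,1}$ is a candidate Dolbeault operator, and $(T^{\bot}L,\tilde J^{\bot})$ is holomorphic exactly when $\bar\partial^2=0$, i.e. $(R^{\bot})^{0,2}=0$, equivalently $R^{\bot}(\tilde J X,\tilde J Y)=R^{\bot}(X,Y)$. I expect this $(1,1)$-type verification to be the main obstacle. To handle it I would insert the Ricci equation into the type condition: at each point $\hat J$ makes $(M,\hat J)$ K\"ahler, so the hyperk\"ahler curvature satisfies $\bar R(\hat J X,\hat J Y)=\bar R(X,Y)$, which controls the ambient contribution, and I would then analyze the remaining second fundamental form term $[A_{\xi},A_{\eta}]$, using the interaction of $\mathbf B$ with $\hat J$ encoded in the block relations (including the varying-phase correction visible in the normal part of the displayed identity above). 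Establishing the $\tilde J$-invariance of this quadratic shape-operator term is the delicate step where the full hyperk\"ahler structure of $M$ must genuinely be used.
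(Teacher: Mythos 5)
Your two parallelism computations are correct and constitute essentially the same proof as the paper's. The paper likewise starts from \autoref{lem:hyper-Lagrangian}, differentiates the identity $\left(J_{\alpha}X\right)^{\top}=\lambda_{\alpha}\tilde JX$ using $\bar\nabla J_{\alpha}=0$ and the Gauss formula, and kills the $\dif\lambda_{\alpha}$ terms by contracting against $\lambda_{\alpha}$ and using $\sum_{\alpha}\lambda_{\alpha}^{2}=1$; your variant (differentiating $\hat JY$ directly and then projecting) is a tidier arrangement of the same computation, and the paper handles $\nabla^{\bot}\tilde J^{\bot}=0$ exactly as you do, disposed of with the word ``similarly''.

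The divergence is your third step, and you should know it is not part of the paper's proof at all: the paper stops at $\nabla\tilde J=0$ and $\nabla^{\bot}\tilde J^{\bot}=0$, treating the latter (as in Leung--Wan) as the assertion that the normal bundle is holomorphic, and never discusses the Koszul--Malgrange integrability condition $\left(R^{\bot}\right)^{0,2}=0$. Your instinct that this is the delicate point is sound, but your sketch leaves it open, and a direct pointwise attack is genuinely obstructed when the phase varies: polarizing \eqref{eq:2} gives
\begin{align*}
\hin{\left(\mathbf{A}^{\xi}\tilde J+\tilde J\mathbf{A}^{\xi}\right)Y}{X}=\sum_{\alpha=1}^{3}\left[X\left(\lambda_{\alpha}\right)\hin{J_{\alpha}Y}{\xi}-Y\left(\lambda_{\alpha}\right)\hin{J_{\alpha}X}{\xi}\right],
\end{align*}
so the shape operators do not anticommute with $\tilde J$ when $\dif J\neq0$, and $\left[\mathbf{A}^{\xi},\mathbf{A}^{\eta}\right]$ need not commute with $\tilde J$. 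The clean way to finish is a case split rather than a curvature identity: for $n=1$ the base is a complex curve, on which $(0,2)$-forms vanish identically, so every $\bar\partial$-operator is integrable; for $n>1$, \autoref{thm:main1} (whose proof uses only the K\"ahler half of this lemma, so there is no circularity) forces $\dif J=0$, whence the displayed obstruction vanishes, $\mathbf{A}^{\xi}$ anticommutes with $\tilde J$, and $\left[\mathbf{A}^{\xi},\mathbf{A}^{\eta}\right]$ commutes with $\tilde J$; combined with the type $(1,1)$ property of the ambient K\"ahler curvature $\bar R$ and the Ricci equation, this yields $\left(R^{\bot}\right)^{0,2}=0$.
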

\begin{proof}
We will give an alternative proof here. 
By \autoref{lem:hyper-Lagrangian}, for all $X\in\Gamma(TL)$, we have
\begin{align*}
\lambda_{\alpha}\tilde JX=\left(J_{\alpha}X\right)^{\top},\quad\alpha=1,2,3.
\end{align*}
For all $Y\in\Gamma(TL)$, we get
\begin{align*}
&Y(\lambda_{\alpha})\tilde JX+\lambda_{\alpha}\left(\nabla_Y\tilde J\right)X+\lambda_{\alpha}\tilde J\nabla_YX
= Y(\lambda_{\alpha})\tilde JX+\lambda_{\alpha}\nabla_Y(\tilde JX)
= \nabla_Y(\lambda_{\alpha}\tilde JX)\\
=&\nabla_Y\left(J_{\alpha}X\right)^{\top}
=\bar\nabla_Y\left(J_{\alpha}X\right)^{\top}-\mathbf{B}\left(Y,\left(J_{\alpha}X\right)^{\top}\right)
=\bar\nabla_Y\left(\sum_{j=1}^{2n}\hin{J_{\alpha}X}{e_j}e_j\right)-\lambda_{\alpha}\mathbf{B}\left(Y,\tilde JX\right)\\
=&\left(\bar\nabla_Y(J_{\alpha}X)\right)^{\top}+\sum_{j=1}^{2n}\hin{J_{\alpha}X}{\mathbf{B}(Y,e_j)}e_j+\sum_{j=1}^{2n}\hin{J_{\alpha}X}{e_j}\mathbf{B}(Y,e_j)-\lambda_{\alpha}\mathbf{B}\left(Y,\tilde JX\right)\\
=&\left(J_{\alpha}\nabla_YX+J_{\alpha}\mathbf{B}(Y,X)\right)^{\top}+\mathbf{A}^{\left(J_{\alpha}X\right)^{\bot}}(Y),
\end{align*}
where $\set{e_j}_{1\leq j\leq 2n}$ is a local orthonormal frame of $TL$ and $\mathbf{A}$ is the shape operator. Thus,
\begin{align*}
Y(\lambda_{\alpha})\tilde JX+\lambda_{\alpha}\left(\nabla_Y\tilde J\right)X=\left(J_{\alpha}\mathbf{B}(Y,X)\right)^{\top}+\mathbf{A}^{\left(J_{\alpha}X\right)^{\bot}}(Y).
\end{align*}
Since $\sum_{\alpha=1}^3\lambda_{\alpha}^2=1$, we get
\begin{align*}
\left(\nabla_Y\tilde J\right)X=\left(\tilde J_1\mathbf{B}(Y,X)\right)^{\top}+\mathbf{A}^{\left(\tilde J_1X\right)^{\bot}}(Y)=0.
\end{align*}
Therefore, $\nabla\tilde J=0$ which implies that $\left(L,\tilde J\right)$ is a K\"ahler manifold. Similarly, one can prove that $\nabla^{\bot}\tilde J^{\bot}=0$.
\end{proof}

\begin{lem}If $L$ is hyper-Lagrangian, then
\begin{align}\label{eq:0}
\hin{\mathbf{B}(X,Y)}{J_{\alpha}Z}=\hin{\mathbf{B}(X,Z)}{J_{\alpha}Y}-X\left(\lambda_{\alpha}\right)\hin{\tilde JY}{Z},\quad\forall X, Y, Z\in TL.
\end{align}
or equivalently
\begin{align}\label{eq:1}
Y(\lambda_{\alpha})\tilde JX=\left(J_{\alpha}\mathbf{B}(Y,X)\right)^{\top}+\mathbf{A}^{\left(J_{\alpha}X\right)^{\bot}}(Y),\quad\forall X, Y\in TL.
\end{align}
Moreover,
\begin{align}\label{eq:2}
\mathbf{B}\left(X,\tilde JY\right)=\tilde J^{\bot}\mathbf{B}\left(X,Y\right)+\sum_{\alpha=1}^3X\left(\lambda_{\alpha}\right)J_{\alpha}Y.
\end{align}
\end{lem}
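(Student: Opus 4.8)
The plan is to handle the three displayed identities in order, noting that \eqref{eq:1} is already available from the preceding argument and that \eqref{eq:0} is simply its tangential dual, so that the only genuinely new content is \eqref{eq:2}.

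First, \eqref{eq:1} is exactly the identity produced at the end of the proof of \autoref{lem:restriction1}: there we obtained $Y(\lambda_{\alpha})\tilde JX+\lambda_{\alpha}\left(\nabla_Y\tilde J\right)X=\left(J_{\alpha}\mathbf{B}(Y,X)\right)^{\top}+\mathbf{A}^{\left(J_{\alpha}X\right)^{\bot}}(Y)$, and since $\nabla\tilde J=0$ the second term on the left drops out. To pass from \eqref{eq:1} to \eqref{eq:0} I would pair \eqref{eq:1} with an arbitrary $Z\in\Gamma(TL)$. Using that $J_{\alpha}$ is skew-adjoint (being an orthogonal complex structure, $\hin{J_{\alpha}u}{v}=-\hin{u}{J_{\alpha}v}$), together with the defining relation of the shape operator $\hin{\mathbf{A}^{\nu}(Y)}{Z}=\hin{\mathbf{B}(Y,Z)}{\nu}$ and the fact that $\mathbf{B}$ is normal-valued, the two terms on the right become $-\hin{\mathbf{B}(Y,X)}{J_{\alpha}Z}$ and $\hin{\mathbf{B}(Y,Z)}{J_{\alpha}X}$; after relabeling $X\leftrightarrow Y$ this is precisely \eqref{eq:0}. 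Reversing the steps gives the converse, so \eqref{eq:0} and \eqref{eq:1} are equivalent.

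For \eqref{eq:2} I would exploit that each $J_{\alpha}$ is parallel on $M$, computing $\bar\nabla_X(J_{\alpha}Y)$ two ways for $X,Y\in\Gamma(TL)$. On one hand $\bar\nabla_X(J_{\alpha}Y)=J_{\alpha}\bar\nabla_XY=J_{\alpha}\nabla_XY+J_{\alpha}\mathbf{B}(X,Y)$, whose normal part, after splitting via \autoref{lem:hyper-Lagrangian} so that $J_{\alpha}\mathbf{B}(X,Y)=\left(J_{\alpha}\mathbf{B}(X,Y)\right)^{\top}+\lambda_{\alpha}\tilde J^{\bot}\mathbf{B}(X,Y)$, equals $\left(J_{\alpha}\nabla_XY\right)^{\bot}+\lambda_{\alpha}\tilde J^{\bot}\mathbf{B}(X,Y)$. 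On the other hand, writing $J_{\alpha}Y=\lambda_{\alpha}\tilde JY+\left(J_{\alpha}Y\right)^{\bot}$ and applying the Gauss and Weingarten formulas (again using $\nabla\tilde J=0$), the normal part of $\bar\nabla_X(J_{\alpha}Y)$ is $\lambda_{\alpha}\mathbf{B}(X,\tilde JY)+\nabla^{\bot}_X\left(\left(J_{\alpha}Y\right)^{\bot}\right)$. Equating, multiplying by $\lambda_{\alpha}$, summing over $\alpha$, and using $\sum_{\alpha}\lambda_{\alpha}^2=1$ yields $\mathbf{B}(X,\tilde JY)=\tilde J^{\bot}\mathbf{B}(X,Y)+\sum_{\alpha}\lambda_{\alpha}\left(J_{\alpha}\nabla_XY\right)^{\bot}-\sum_{\alpha}\lambda_{\alpha}\nabla^{\bot}_X\left(\left(J_{\alpha}Y\right)^{\bot}\right)$.

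The main obstacle is the last normal-derivative sum. The first sum vanishes: since $T_xL$ is complex with respect to $\hat J\coloneqq\sum_{\alpha}\lambda_{\alpha}J_{\alpha}$, the operator $\hat J$ preserves $TL$, whence $\sum_{\alpha}\lambda_{\alpha}\left(J_{\alpha}\nabla_XY\right)^{\bot}=\left(\hat J\nabla_XY\right)^{\bot}=0$. For the remaining sum, the key trick is the pointwise identity $\sum_{\alpha}\lambda_{\alpha}\left(J_{\alpha}Y\right)^{\bot}=\left(\hat JY\right)^{\bot}=0$; applying $\nabla^{\bot}_X$ to it and using the Leibniz rule gives $\sum_{\alpha}\lambda_{\alpha}\nabla^{\bot}_X\left(\left(J_{\alpha}Y\right)^{\bot}\right)=-\sum_{\alpha}X(\lambda_{\alpha})\left(J_{\alpha}Y\right)^{\bot}$, so that the awkward derivative term is traded for a lower-order one. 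Finally, since $\left(J_{\alpha}Y\right)^{\top}=\lambda_{\alpha}\tilde JY$ and $\sum_{\alpha}X(\lambda_{\alpha})\lambda_{\alpha}=\tfrac12X\!\left(\sum_{\alpha}\lambda_{\alpha}^2\right)=0$, one upgrades $\left(J_{\alpha}Y\right)^{\bot}$ to $J_{\alpha}Y$, obtaining the residual term $\sum_{\alpha}X(\lambda_{\alpha})J_{\alpha}Y$ and hence \eqref{eq:2}.
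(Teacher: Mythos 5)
Your proposal is correct, but it is organized quite differently from the paper's own proof, so a comparison is worthwhile. For \eqref{eq:0}--\eqref{eq:1} the paper works in the opposite direction: it proves \eqref{eq:0} from scratch, computing $\hin{\mathbf{B}(X,Y)}{J_{\alpha}Z}=\hin{\bar\nabla_XY}{J_{\alpha}Z}$ at a point where $\nabla X=\nabla Y=\nabla Z=0$, using metric compatibility, $\bar\nabla J_{\alpha}=0$, the relation $\hin{Y}{J_{\alpha}Z}=\lambda_{\alpha}\hin{Y}{\tilde JZ}$ from \autoref{lem:hyper-Lagrangian}, and $\nabla\tilde J=0$ from \autoref{lem:restriction1}, and it leaves the equivalence with \eqref{eq:1} implicit. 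You instead extract \eqref{eq:1} directly from the displayed identity in the proof of \autoref{lem:restriction1} (legitimate, since that identity was established there before $\nabla\tilde J=0$ was concluded) and then dualize by pairing against an arbitrary tangent $Z$, using skew-adjointness of $J_{\alpha}$ and the shape-operator relation; this has the added merit of actually verifying the ``or equivalently'' claim in the statement, which the paper never spells out. For \eqref{eq:2} the paper is more economical: since $\tilde JY=\sum_{\alpha}\lambda_{\alpha}J_{\alpha}Y$ as a vector in $TM$, it expands $\mathbf{B}\left(X,\tilde JY\right)=\left(\bar\nabla_X\left(\sum_{\alpha}\lambda_{\alpha}J_{\alpha}Y\right)\right)^{\bot}$ in a single pass via the Leibniz rule and parallelism of $J_{\alpha}$, then reads off the normal part. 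Your two-way computation of $\bar\nabla_X\left(J_{\alpha}Y\right)$, followed by weighting with $\lambda_{\alpha}$, summing, and the Leibniz trick applied to the identity $\sum_{\alpha}\lambda_{\alpha}\left(J_{\alpha}Y\right)^{\bot}=0$, reaches the same conclusion with more steps; in effect you re-derive, term by term, what the paper obtains by differentiating the products $\lambda_{\alpha}J_{\alpha}Y$ directly, and both arguments ultimately rest on the same three ingredients (parallelism of $J_{\alpha}$, the splitting of \autoref{lem:hyper-Lagrangian}, and $\sum_{\alpha}\lambda_{\alpha}X\left(\lambda_{\alpha}\right)=0$). Incidentally, your final formula is stated correctly where the paper's last display has a typo: the term $\tilde J^{\bot}\mathbf{B}\left(X,Y\right)$ appears there as $\mathbf{B}\left(X,Y\right)$.
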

\begin{proof} Without loss of generality, we may assume $\nabla X=\nabla Y=\nabla Z=0$ at a considered point. We shall compute at this considered point,
\begin{align*}
\hin{\mathbf{B}(X,Y)}{J_{\alpha}Z}=&\hin{\bar\nabla_XY}{J_{\alpha}Z}\\
=&\bar\nabla_X\hin{Y}{J_{\alpha}Z}-\hin{Y}{J_{\alpha}\bar\nabla_XZ}\\
=&X\left(\lambda_{\alpha}\hin{Y}{\tilde JZ}
\right)-\hin{Y}{J_{\alpha}\mathbf{B}(X,Z)}\\
=&X\left(\lambda_{\alpha}\right)\hin{Y}{\tilde JZ}+\hin{J_{\alpha}Y}{\mathbf{B}(X,Z)}.
\end{align*}
Here the last two equalities followed from  \autoref{lem:hyper-Lagrangian} and \autoref{lem:restriction1} respectively.

For the second claim, we compute
\begin{align*}
\mathbf{B}\left(X,\tilde JY\right)=&\left(\bar\nabla_X\left(\tilde JY\right)\right)^{\bot}\\
=&\sum_{\alpha=1}^3\left(\bar\nabla_X\left(\lambda_{\alpha}J_{\alpha}Y\right)\right)^{\bot}\\
=&\left[\sum_{\alpha=1}^3X\left(\lambda_{\alpha}\right)J_{\alpha}Y+\sum_{\alpha=1}^3\lambda_{\alpha}J_{\alpha}\mathbf{B}\left(X,Y\right)+\sum_{\alpha=1}^3\lambda_{\alpha}J_{\alpha}\nabla_XY\right]^{\bot}\\
=&\sum_{\alpha=1}^3X\left(\lambda_{\alpha}\right)J_{\alpha}Y+\mathbf{B}\left(X,Y\right).
\end{align*}
\end{proof}

Now we can give the following
\begin{proof}[Proof of \autoref{thm:main1}]
For each $\alpha\in\set{1,2,3}$, on one hand, according to \eqref{eq:1}, we have
\begin{align*}
2nY(\lambda_{\alpha})=&\sum_{j=1}^{2n}\hin{\left(J_{\alpha}\mathbf{B}(Y,e_j)\right)^{\top}}{\tilde Je_j}+\sum_{j=1}^{2n}\hin{\mathbf{A}^{\left(J_{\alpha}e_j\right)^{\bot}}(Y)}{\tilde Je_j}\\
=&\sum_{j=1}^{2n}\hin{J_{\alpha}\mathbf{B}(Y,e_j)}{\tilde Je_j}+\sum_{j=1}^{2n}\hin{\mathbf{B}(Y,\tilde Je_j)}{J_{\alpha}e_j}\\
=&2\sum_{j=1}^{2n}\hin{\mathbf{B}(Y,\tilde Je_j)}{J_{\alpha}e_j}\\
=&2\sum_{j=1}^{2n}\hin{\mathbf{A}^{\left(J_{\alpha}e_j\right)^{\bot}}\left(\tilde Je_j\right)}{Y}.
\end{align*}
Thus
\begin{align}\label{eq:1-n}
n\nabla\lambda_{\alpha}=\sum_{j=1}^{2n}\mathbf{A}^{\left(J_{\alpha}e_j\right)^{\bot}}\left(\tilde Je_j\right).
\end{align}
On the other hand, from \eqref{eq:0}, we derive
\begin{align*}
\sum_{j=1}^{2n}\hin{\mathbf{B}\left(\tilde Je_j,X\right)}{J_{\alpha}e_j}=&\sum_{j=1}^{2n}\hin{\mathbf{B}\left(\tilde Je_j,e_j\right)}{J_{\alpha}X}+\sum_{j=1}^{2n}\hin{\nabla\lambda_{\alpha}}{\tilde Je_j}\hin{X}{\tilde Je_j}\\
=&\hin{\nabla\lambda_{\alpha}}{X}.
\end{align*}
Thus,
\begin{align}\label{eq:1-1}
\nabla\lambda_{\alpha}=\sum_{j=1}^{2n}\mathbf{A}^{\left(J_{\alpha}e_j\right)^{\bot}}\left(\tilde Je_j\right).
\end{align}
Combining \eqref{eq:1-n} with \eqref{eq:1-1}, we conclude that
\begin{align*}
(n-1)\nabla\lambda_{\alpha}=0,\quad\alpha=1,2,3.
\end{align*}
Consequently, if $n>1$, then $\dif J=0$ which implies that the complex phase map $J$ is a constant map. In particular, $L$ is complex Lagrangian when $n>1$.
\end{proof}

\section{Surfaces }
Let $\Sigma$ be a closed surface immersed in a hyperk\"ahler $4$-manifold $M$. As shown in the previous section, we know that $\Sigma$ is a hyper-Lagrangian surface in $M$ with holomorphic norm bundle $T^{\bot}\Sigma$.  Leung and Wan obtained the following formula (\cite[equation (1.1)]{LeuWan07})
\begin{align}\label{eq:LW}
    \partial J=\dfrac{\sqrt{-1}}{2}\iota_{\mathbf{H}}\Omega_{J}.
\end{align}
Introduce the curvature form $H\in\Gamma\left(T^{*}\Sigma\otimes J^{-1}T\mathbb{S}^2\right)$ as follows:
\begin{align*}
    H(X)\coloneqq\left(\hin{\mathbf{H}}{J_1X},\hin{\mathbf{H}}{J_2X},\hin{\mathbf{H}}{J_3X}\right)\in T_{J(x)}\mathbb{S}^2,\quad\forall X\in T_x\Sigma.
\end{align*}

Recall the complex structure $J_{\mathbb{S}^2}$ on $T\mathbb{S}^2$: for every tangent vector field $(a,b,c)\in T\mathbb{S}^2\subset T\mathbb{R}^3$,
\begin{align*}
J_{\mathbb{S}^2}(a,b,c)=&\left(\lambda_1,\lambda_2,\lambda_3\right)\times(a,b,c)\\
=&\left(\begin{vmatrix}\lambda_2&\lambda_3\\
b&c
\end{vmatrix},\begin{vmatrix}\lambda_3&\lambda_1\\
c&a
\end{vmatrix},\begin{vmatrix}\lambda_1&\lambda_2\\
a&b
\end{vmatrix}\right).
\end{align*}

 We can reformulate \eqref{eq:LW} as follows
\begin{lem}\label{lem:LW}
\begin{align}\label{eq:LW1}
\partial J=&-\dfrac{\sqrt{-1}}{4}H-\dfrac{1}{4}J_{\mathbb{S}^2}\circ H.
\end{align}
Consequently,
\begin{align*}
    \abs{\partial J}^2=\dfrac14\abs{\mathbf{H}}^2.
\end{align*}
In particular, $\Sigma$ is minimal iff $J$ is anti-holomorphic.
\end{lem}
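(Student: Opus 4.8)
The statement is a reformulation of the Leung--Wan identity \eqref{eq:LW}, so the plan is to establish \eqref{eq:LW1} by unwinding both sides pointwise and then read off the two consequences. Fix $x\in\Sigma$ and choose a local orthonormal frame $\{e_1,e_2=\tilde Je_1\}$ of $T\Sigma$. At the value $J(x)=(\lambda_1,\lambda_2,\lambda_3)$ pick a unit vector $f\in T_{J(x)}\mathbb{S}^2\subset\mathbb{R}^3$ and set $K=\sum_\alpha f_\alpha J_\alpha$, the compatible complex structure orthogonal to $\hat J=\sum_\alpha\lambda_\alpha J_\alpha$; a short quaternionic computation gives $\hat JK=K_{J_{\mathbb{S}^2}f}$, so that $\{f,J_{\mathbb{S}^2}f\}$ is an oriented orthonormal frame of $T_{J(x)}\mathbb{S}^2$ and $\Omega_J$ is represented by $\omega_K\pm\sqrt{-1}\,\omega_{\hat JK}$ in this frame. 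The freedom in choosing $f$ is exactly the $\mathrm{U}(1)$-phase that identifies the complex line $J^{-1}T\mathbb{S}^2\otimes\mathbb{C}$ with $\mathbb{C}$, and it is through this identification that the scalar $\iota_{\mathbf H}\Omega_J$ becomes the $T\mathbb{S}^2$-valued object $\partial J$.

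Next I would contract. Since each $J_\alpha$ is skew-adjoint, $\omega_K(\mathbf H,X)=\langle K\mathbf H,X\rangle=-\langle\mathbf H,KX\rangle=-\sum_\alpha f_\alpha\langle\mathbf H,J_\alpha X\rangle=-\langle f,H(X)\rangle$, and similarly $\omega_{\hat JK}(\mathbf H,X)=-\langle J_{\mathbb{S}^2}f,H(X)\rangle$. Hence $\iota_{\mathbf H}\Omega_J(X)$ is the complex number whose real and imaginary parts are the components of $H(X)$ in the frame $\{f,J_{\mathbb{S}^2}f\}$. Writing $H(X)=\langle f,H(X)\rangle f+\langle J_{\mathbb{S}^2}f,H(X)\rangle J_{\mathbb{S}^2}f$ and pushing this number through the identification above (where multiplication by $\sqrt{-1}$ corresponds to the action of $J_{\mathbb{S}^2}$), the factor $\tfrac{\sqrt{-1}}{2}$ in \eqref{eq:LW} reassembles $\iota_{\mathbf H}\Omega_J$ into $-\tfrac{\sqrt{-1}}{4}H-\tfrac14 J_{\mathbb{S}^2}\circ H$, which is \eqref{eq:LW1}. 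Equivalently one may note $-\tfrac14(\sqrt{-1}+J_{\mathbb{S}^2})\circ H=-\tfrac{\sqrt{-1}}{2}P^{1,0}H$ with $P^{1,0}=\tfrac12(\mathrm{Id}-\sqrt{-1}J_{\mathbb{S}^2})$, so \eqref{eq:LW1} says that $\partial J$ is $-\tfrac{\sqrt{-1}}{2}$ times the $(1,0)$-part of the curvature form.

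For the norm identity I would compute directly from \eqref{eq:LW1}. Writing $u=H(e_i)$ and $v=J_{\mathbb{S}^2}H(e_i)$, the cross terms cancel and $|\partial J(e_i)|^2=\tfrac1{16}(|u|^2+|v|^2)=\tfrac18|H(e_i)|^2$ because $J_{\mathbb{S}^2}$ is an isometry. It then remains to show $|H|^2=2|\mathbf H|^2$. This is the one genuinely geometric input: for each $i$ the vectors $J_1e_i,J_2e_i,J_3e_i$ are orthonormal (by skew-adjointness and the quaternion relations) and each is orthogonal to $e_i$, so they span $e_i^{\perp}\subset T_xM$; since $\mathbf H\perp T\Sigma$ lies in $e_i^{\perp}$ we get $\sum_\alpha\langle\mathbf H,J_\alpha e_i\rangle^2=|\mathbf H|^2$, whence $|H|^2=\sum_{i}|\mathbf H|^2=2|\mathbf H|^2$ and $|\partial J|^2=\tfrac14|\mathbf H|^2$. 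Finally, $\Sigma$ is minimal iff $\mathbf H=0$ iff $|\partial J|^2=0$ iff $\partial J=0$, i.e.\ iff $J$ is anti-holomorphic with respect to $(\tilde J,J_{\mathbb{S}^2})$; this is immediate from the norm identity.

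The main obstacle is the bookkeeping in the first two steps: correctly implementing the identification between the $\mathrm{U}(1)$-phase ambiguity of the holomorphic symplectic form $\Omega_J$ and the complex structure $J_{\mathbb{S}^2}$ on the target, and thereby pinning down the constant $\tfrac14$ together with the relative sign of the $J_{\mathbb{S}^2}\circ H$ term. The contraction and the norm computation are routine once the conventions for $\Omega_J=\omega_K+\sqrt{-1}\,\omega_{K\hat J}$, for the $(1,0)$-differential $\partial J$, and for the orientation of $\{f,J_{\mathbb{S}^2}f\}$ are fixed consistently; getting all of these to line up so that no spurious factor of $2$ or sign survives is where the care is needed.
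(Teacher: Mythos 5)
Your peripheral computations are correct: the contraction formulas $\omega_K(\mathbf H,X)=-\hin{f}{H(X)}$ and $\omega_{\hat JK}(\mathbf H,X)=-\hin{J_{\mathbb{S}^2}f}{H(X)}$, the quaternionic identity $\hat JK=\sum_\gamma(\lambda\times f)_\gamma J_\gamma$, and the norm argument ($\{e_i,J_1e_i,J_2e_i,J_3e_i\}$ orthonormal, $\mathbf H\perp e_i$, hence $\abs{H}^2=2\abs{\mathbf H}^2$ and $\abs{\partial J}^2=\tfrac18\abs{H}^2=\tfrac14\abs{\mathbf H}^2$) are all sound, and the minimality statement follows. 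But the proof of \eqref{eq:LW1} itself — the actual content of the lemma — is not there. The entire derivation rests on the sentence that the factor $\tfrac{\sqrt{-1}}{2}$ in \eqref{eq:LW} ``reassembles'' $\iota_{\mathbf H}\Omega_J$ into $-\tfrac{\sqrt{-1}}{4}H-\tfrac14 J_{\mathbb{S}^2}\circ H$, and that reassembly is never performed; you yourself flag it as ``where the care is needed.'' This is a genuine gap, not a routine omission, because \eqref{eq:LW} equates a $\mathbb{C}$-valued $1$-form with a $(T\mathbb{S}^2\otimes\mathbb{C})$-valued one, so it has no content until one fixes (i) the phase of $\Omega_J$ (the paper writes $\omega_K+\sqrt{-1}\,\omega_{K\hat J}$, and $\omega_{K\hat J}=-\omega_{\hat JK}$ since $K$ and $\hat J$ anticommute), (ii) whether the scalar multiplies the $(1,0)$- or the $(0,1)$-vector $\tfrac12\left(f\mp\sqrt{-1}J_{\mathbb{S}^2}f\right)$, and (iii) its normalization. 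These choices change the answer: e.g.\ taking $\omega_K+\sqrt{-1}\,\omega_{\hat JK}$ with the $(1,0)$-vector yields exactly \eqref{eq:LW1}, while taking the paper's literal $\omega_K+\sqrt{-1}\,\omega_{K\hat J}$ with the $(0,1)$-vector yields $-\tfrac{\sqrt{-1}}{4}H+\tfrac14 J_{\mathbb{S}^2}\circ H$ (wrong sign), and mismatched choices even produce an expression that is not invariant under $f\mapsto J_{\mathbb{S}^2}f$, i.e.\ not well defined. So your argument, as written, only shows that \eqref{eq:LW1} is consistent with \eqref{eq:LW} for \emph{some} choice of conventions; it does not prove \eqref{eq:LW1}.

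The paper avoids this trap entirely by not using \eqref{eq:LW} as an input. Its proof is self-contained: from the structure equation \eqref{eq:2} of hyper-Lagrangian surfaces one gets $\mathbf H=\sum_{\alpha}\tilde J^{\bot}J_{\alpha}\nabla\lambda_{\alpha}$; combining this with \eqref{eq:3-1} gives the real, convention-free identity $H=\dif J\circ\tilde J+J_{\mathbb{S}^2}\circ\dif J$; substituting into the intrinsic definition $\partial J=\tfrac14\left(\dif J-J_{\mathbb{S}^2}\circ\dif J\circ\tilde J\right)-\tfrac{\sqrt{-1}}{4}\left(\dif J\circ\tilde J+J_{\mathbb{S}^2}\circ\dif J\right)$ then forces the constant $\tfrac14$ and both signs with no external bookkeeping. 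If you want to salvage your route, you must carry out step (i)--(iii) explicitly against Leung--Wan's conventions; alternatively, prove the intermediate identity $H=\dif J\circ\tilde J+J_{\mathbb{S}^2}\circ\dif J$ directly from \eqref{eq:2} and \eqref{eq:3-1} as the paper does, after which your norm computation finishes the lemma.
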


\begin{proof}

For every tangent vector $X\in T\Sigma$ and normal vector $V\in T^{\bot}\Sigma$, we have
\begin{align*}
\hin{V}{J_{1}\tilde JX}=&\lambda_2\hin{V}{J_3X}-\lambda_3\hin{V}{J_2X},\\
\hin{V}{J_{2}\tilde JX}=&-\lambda_1\hin{V}{J_3X}+\lambda_3\hin{V}{J_1X},\\
\hin{V}{J_{3}\tilde JX}=&\lambda_1\hin{V}{J_2X}-\lambda_2\hin{V}{J_1X}.
\end{align*}
In other words,
\begin{align}\label{eq:3-1}
\left(\hin{V}{J_1\tilde JX},\hin{V}{J_2\tilde JX},\hin{V}{J_3\tilde JX}\right)=J_{\mathbb{S}^2}\left(\hin{V}{J_1X},\hin{V}{J_2X},\hin{V}{J_3X}\right).
\end{align}
Thus,
\begin{align*}
    H\circ\tilde J=J_{\mathbb{S}^2}\circ H.
\end{align*}
According to \eqref{eq:2}, we get
\begin{align*}
    \mathbf{H}=\sum_{\alpha=1}^3\tilde J^{\bot}J_{\alpha}\nabla\lambda_{\alpha}.
\end{align*}
It follows that
\begin{align*}
    \hin{\mathbf{H}}{J_{\alpha}X}=&\hin{\sum_{\beta=1}^3\tilde J^{\bot}J_{\beta}\nabla\lambda_{\beta}}{J_{\alpha}X}
    =-\hin{\sum_{\beta=1}^3J_{\beta}\nabla\lambda_{\beta}}{\tilde J^{\bot}J_{\alpha}X}
    =\hin{\sum_{\beta=1}^3J_{\beta}\nabla\lambda_{\beta}}{J_{\alpha}\tilde JX}.
\end{align*}
Combining with \eqref{eq:3-1}, we get
\begin{align*}
    H\left(X\right)=&J_{\mathbb{S}^2}\left(\hin{\sum_{\beta=1}^3J_{\beta}\nabla\lambda_{\beta}}{J_{1}X},\hin{\sum_{\beta=1}^3J_{\beta}\nabla\lambda_{\beta}}{J_{2}X},\hin{\sum_{\beta=1}^3J_{\beta}\nabla\lambda_{\beta}}{J_{3}X}\right).
\end{align*}
Direct calculation yields
\begin{align*}
    \hin{\sum_{\beta=1}^3J_{\beta}\nabla\lambda_{\beta}}{J_{1}X}=&\hin{\nabla\lambda_1+\lambda_2\tilde J\nabla\lambda_3-\lambda_3\tilde J\nabla\lambda_2}{X},\\
    \hin{\sum_{\beta=1}^3J_{\beta}\nabla\lambda_{\beta}}{J_{2}X}=&\hin{\nabla\lambda_2+\lambda_3\tilde J\nabla\lambda_1-\lambda_1\tilde J\nabla\lambda_3}{X},\\
    \hin{\sum_{\beta=1}^3J_{\beta}\nabla\lambda_{\beta}}{J_{3}X}=&\hin{\nabla\lambda_3+\lambda_1\tilde J\nabla\lambda_2-\lambda_2\tilde J\nabla\lambda_1}{X},
\end{align*}
which implies
\begin{align*}
    \left(\hin{\sum_{\beta=1}^3J_{\beta}\nabla\lambda_{\beta}}{J_{1}X},\hin{\sum_{\beta=1}^3J_{\beta}\nabla\lambda_{\beta}}{J_{2}X},\hin{\sum_{\beta=1}^3J_{\beta}\nabla\lambda_{\beta}}{J_{3}X}\right)=&\dif J\left(X\right)-J_{\mathbb{S}^2}\dif J\left(\tilde JX\right).
\end{align*}
Therefore, we have
\begin{align*}
    H=\dif J\circ\tilde J+J_{\mathbb{S}^2}\circ\dif J.
\end{align*}
By the definition
\begin{align*}
\partial J=&\dfrac14\left(\dif J-J_{\mathbb{S}^2}\circ\dif J\circ\tilde J\right)-\dfrac{\sqrt{-1}}{4}\left(\dif J\circ\tilde J+J_{\mathbb{S}^2}\circ\dif J\right).
\end{align*}
Hence we obtain 
\begin{align*}
\partial J=&-\dfrac{\sqrt{-1}}{4}H-\dfrac{1}{4}J_{\mathbb{S}^2}\circ H.
\end{align*}
\end{proof}

\begin{theorem}
Let $\Sigma$ be a closed surface immersed in a hyperk\"ahler $4$-manifold $M$, then
\begin{align}\label{eq:tension}
\tau\left(J\right)=&J_{\mathbb{S}^2}\left(\Div\left(J_{1}\mathbf{H}\right)^{\top}, \Div\left(J_{2}\mathbf{H}\right)^{\top}, \Div\left(J_{3}\mathbf{H}\right)^{\top}\right).
\end{align}
Moreover,
\begin{align*}
\det\left(\dif J\right)=&\kappa+\kappa^{\bot},
\end{align*}
where
\begin{align*}
\kappa=R(e_1,e_2,e_1,e_2),\quad\kappa^{\bot}=\hin{R^{\bot}\left(e_1,e_2\right)\nu_2}{\nu_1}.
\end{align*}
Here $e_1,e_2,\nu_1,\nu_2$ determines the orientation of $M$. As a consequence,
\begin{align*}
2\deg(J)=\chi\left(T\Sigma\right)+\chi\left(T^{\bot}\Sigma\right).
\end{align*}
\end{theorem}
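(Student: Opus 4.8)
The statement bundles three assertions, and I would prove them in the displayed order. For the tension-field identity, the plan is to combine the harmonic-map-into-a-sphere formula with the expression for the curvature form $H$ extracted from the proof of \autoref{lem:LW}. Regarding $J=(\lambda_1,\lambda_2,\lambda_3)$ as an $\mathbb{R}^3$-valued map and using that the second fundamental form of $\mathbb{S}^2\subset\mathbb{R}^3$ is $-\hin{\cdot}{\cdot}J$, the composition rule for tension fields gives $\tau(J)=\Delta J+\abs{\dif J}^2J$, with $\Delta$ the componentwise rough Laplacian on $\Sigma$. I would then transform the right-hand side of the claim. Skew-symmetry of $J_\alpha$ gives $(J_\alpha\mathbf{H})^{\top}=-\sum_i\hin{\mathbf{H}}{J_\alpha e_i}e_i$, so at a point with $\nabla e_i=0$ one has $\Div(J_\alpha\mathbf{H})^{\top}=-\sum_i e_i\hin{\mathbf{H}}{J_\alpha e_i}$. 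Substituting $H=\dif J\circ\tilde J+J_{\mathbb{S}^2}\circ\dif J$ (from the proof of \autoref{lem:LW}), the $\dif J\circ\tilde J$ part telescopes to the commutator $[e_1,e_2]\lambda_\alpha=0$ — here I use $\nabla\tilde J=0$ from \autoref{lem:restriction1} to arrange $\tilde Je_1=e_2$ — while the $J_{\mathbb{S}^2}\circ\dif J$ part reduces to $-(J\times\Delta J)_\alpha$ once the term $\sum_i\nabla_{e_i}J\times\nabla_{e_i}J=0$ is discarded. Thus the triple of divergences equals $-J\times\Delta J$, and applying $J_{\mathbb{S}^2}(v)=J\times v$ together with $J\times(J\times\Delta J)=\hin{J}{\Delta J}J-\Delta J$ and $\hin{J}{\Delta J}=-\abs{\dif J}^2$ (differentiate $\abs{J}^2\equiv1$) recovers $\Delta J+\abs{\dif J}^2J=\tau(J)$.

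The Jacobian identity $\det(\dif J)=\kappa+\kappa^{\bot}$ is where I expect the genuine work. I would start from $\det(\dif J)=\hin{J}{\nabla_{e_1}J\times\nabla_{e_2}J}$, the pullback of the area form of $\mathbb{S}^2$ in the oriented frame, and express the components $\nabla_{e_i}\lambda_\alpha$ geometrically. Using the adapted oriented frame $e_1,e_2=\tilde Je_1,\nu_1,\nu_2=\tilde J^{\bot}\nu_1$ and the pointwise identity $\lambda_\alpha=\hin{J_\alpha e_1}{e_2}$, differentiation together with $\bar\nabla J_\alpha=0$ rewrites $\dif\lambda_\alpha$ in terms of the second fundamental form and the tangential and normal connection $1$-forms $\omega_{12},\omega_{34}$; one may equally feed $\mathbf{B}$ into the triple product via \eqref{eq:2}. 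I then expect the resulting $2$-form to split into the curvatures of $\omega_{12}$ and of $\omega_{34}$, which the Gauss equation identifies as $\kappa=R(e_1,e_2,e_1,e_2)$ and the Ricci equation as $\kappa^{\bot}=\hin{R^{\bot}(e_1,e_2)\nu_2}{\nu_1}$, the ambient-curvature terms organizing correctly because $\bar R$ commutes with each parallel $J_\alpha$. The delicate points are this Gauss–Ricci bookkeeping and the orientation convention: the hypothesis that $e_1,e_2,\nu_1,\nu_2$ is positively oriented makes the $J_\alpha$ self-dual, which is exactly what fixes the sign to $\kappa+\kappa^{\bot}$ rather than $\kappa-\kappa^{\bot}$.

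The degree formula follows by integrating the previous identity over the closed surface. Writing $\det(\dif J)\,dA=J^{\ast}\omega_{\mathbb{S}^2}$ gives $\int_\Sigma\det(\dif J)\,dA=\deg(J)\,\area(\mathbb{S}^2)=4\pi\deg(J)$. With the paper's curvature conventions, $\kappa$ is the Gaussian curvature of $\Sigma$ and $\kappa^{\bot}$ is the Euler density of the oriented rank-$2$ normal bundle, so the Gauss–Bonnet theorem gives $\int_\Sigma\kappa\,dA=2\pi\chi(T\Sigma)$ and the Chern–Gauss–Bonnet theorem gives $\int_\Sigma\kappa^{\bot}\,dA=2\pi\chi(T^{\bot}\Sigma)$. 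Dividing by $2\pi$ yields $2\deg(J)=\chi(T\Sigma)+\chi(T^{\bot}\Sigma)$. The main obstacle is the second paragraph; the first is a direct computation and the third is a routine application of the two Gauss–Bonnet theorems.
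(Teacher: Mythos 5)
Your first and third claims are in good shape. For the tension-field identity you take a route that differs in packaging from the paper's: you work with the real formulation $\tau(J)=\Delta J+\abs{\dif J}^2J$ of the harmonic map equation into $\mathbb{S}^2$ and reduce everything to cross-product algebra, starting from $H=\dif J\circ\tilde J+J_{\mathbb{S}^2}\circ\dif J$ (established in the proof of \autoref{lem:LW}). Your computation checks out: the $\dif J\circ\tilde J$ contribution is $-[e_1,e_2]\lambda_\alpha=0$ at a point with $\nabla e_i=0$ (using $\nabla\tilde J=0$), the remaining part gives $-J\times\Delta J$, and $J\times(J\times\Delta J)=\hin{J}{\Delta J}J-\Delta J$ together with $\hin{J}{\Delta J}=-\abs{\dif J}^2$ closes the loop. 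The paper instead takes the trace of $\nabla\partial J$ in the complex formulation \eqref{eq:LW1} and identifies it with $\frac14\left(1-\sqrt{-1}J_{\mathbb{S}^2}\right)\tau(J)$, computing $\Div\left(J_{\alpha}\mathbf{H}\right)^{\top}=\sum_j\hin{J_{\alpha}\nabla^{\bot}_{e_j}\mathbf{H}}{e_j}$ along the way; the two arguments are equivalent in substance. The degree formula is handled exactly as in the paper: integrate the Jacobian identity and apply Gauss--Bonnet to $T\Sigma$ and $T^{\bot}\Sigma$.

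The genuine gap is the middle identity $\det\left(\dif J\right)=\kappa+\kappa^{\bot}$, which you yourself call the place ``where I expect the genuine work'' and then do not carry out: ``I then expect the resulting $2$-form to split into the curvatures of $\omega_{12}$ and of $\omega_{34}$'' is an expectation, not an argument, and it is exactly the step on which the theorem — including the sign question $\kappa+\kappa^{\bot}$ versus $\kappa-\kappa^{\bot}$ that you flag as delicate — hinges. Moreover the mechanism you describe is not quite right: differentiating $\lambda_\alpha=\hin{J_\alpha e_1}{e_2}$ using $\bar\nabla J_\alpha=0$ gives
\begin{align*}
X\left(\lambda_\alpha\right)=\hin{J_\alpha\mathbf{B}\left(X,e_1\right)}{e_2}-\hin{J_\alpha\mathbf{B}\left(X,e_2\right)}{e_1},
\end{align*}
so the connection $1$-forms $\omega_{12},\omega_{34}$ drop out entirely (as they must, $\lambda_\alpha$ being independent of the choice of oriented frame); the Jacobian is therefore a quadratic expression in $\mathbf{B}$, and the curvatures can only re-enter through the Gauss and Ricci equations. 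That algebraic identification is precisely the missing computation. The paper avoids this moving-frames bookkeeping by a shortcut you do not exploit: by \autoref{lem:LW}, $\abs{\partial J}^2=\frac14\abs{\mathbf{H}}^2$, hence
\begin{align*}
\det\left(\dif J\right)=\abs{\partial J}^2-\abs{\bar\partial J}^2=2\abs{\partial J}^2-\dfrac12\abs{\dif J}^2=\dfrac12\abs{\mathbf{H}}^2-\dfrac12\abs{\dif J}^2;
\end{align*}
then \eqref{eq:2} yields $\abs{\dif J}^2=\abs{\mathbf{B}}^2-\sum_{j,k}\hin{\mathbf{B}\left(e_j,\tilde Je_k\right)}{\tilde J^{\bot}\mathbf{B}\left(e_j,e_k\right)}$, and the Gauss and Ricci equations (with the ambient terms controlled because $\bar R(e_1,e_2)$ commutes with each parallel $J_\alpha$) convert $\abs{\dif J}^2-\abs{\mathbf{H}}^2$ into $-2\kappa-2\kappa^{\bot}$. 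Until you either carry out your triple-product computation in full or adopt such a reduction, the central claim of the theorem remains unproven.
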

\begin{proof}

Since
\begin{align*}
\partial J=&\dfrac14\left(\dif J-J_{\mathbb{S}^2}\circ\dif J\circ\tilde J\right)-\dfrac{\sqrt{-1}}{4}\left(\dif J\circ\tilde J+J_{\mathbb{S}^2}\circ\dif J\right),
\end{align*}
we have
\begin{align*}
\sum_{j=1}^2\left(\nabla_{e_j}\partial J\right)\left(e_j\right)=\dfrac14\left(1-\sqrt{-1}J_{\mathbb{S}^2}\right)\tau\left(J\right).
\end{align*}
Moreover,
\begin{align*}
\Div\left(J_{\alpha}\mathbf{H}\right)^{\top}=&\sum_{j=1}^2\hin{\nabla_{e_j}\left(J_{\alpha}\mathbf{H}\right)^{\top}}{e_j}\\
=&\sum_{j=1}^2e_j\hin{J_{\alpha}\mathbf{H}}{e_j}-\sum_{j=1}^2\hin{J_{\alpha}\mathbf{H}}{\nabla_{e_j}e_j}\\
=&\sum_{j=1}^2\hin{J_{\alpha}\nabla_{e_j}^{\bot}\mathbf{H}}{e_j}-\sum_{j=1}^2\hin{J_{\alpha}\mathbf{A}^{\mathbf{H}}\left(e_j\right)}{e_j}+\sum_{j=1}^2\hin{J_{\alpha}\mathbf{H}}{\mathbf{H}}\\
=&\sum_{j=1}^2\hin{J_{\alpha}\nabla_{e_j}^{\bot}\mathbf{H}}{e_j}.
\end{align*}
Thus,
\begin{align}\label{eq:divJH}
    \Div\left(J_{\alpha}\mathbf{H}\right)^{\top}=\sum_{j=1}^2\hin{J_{\alpha}\nabla_{e_j}^{\bot}\mathbf{H}}{e_j}.
\end{align}
From \eqref{eq:LW1}, we get
\begin{align*}
\tau\left(J\right)=&J_{\mathbb{S}^2}\left(\Div\left(J_{1}\mathbf{H}\right)^{\top}, \Div\left(J_{2}\mathbf{H}\right)^{\top}, \Div\left(J_{3}\mathbf{H}\right)^{\top}\right).
\end{align*}
According to \autoref{lem:LW}, 
\begin{align}
\det\left(\dif J\right)=&\abs{\partial J}^2-\abs{\bar\partial J}^2\notag\\
=&2\abs{\partial J}^2-\dfrac12\abs{\dif J}^2\notag\\
=&\dfrac12\abs{\mathbf{H}}^2-\dfrac12\abs{\dif J}^2.\label{eq:detJ-1}
\end{align}
By using \eqref{eq:2}, we have
\begin{align*}
    \abs{\dif J(X)}^2\abs{Y}^2=\abs{\mathbf{B}\left(X,\tilde JY\right)-\tilde J^{\bot}\mathbf{B}\left(X,Y\right)}^2.
\end{align*}
It follows that
\begin{align*}
\abs{\dif J}^2=&\dfrac12\sum_{j,k=1}^2\abs{\mathbf{B}\left(e_j,\tilde Je_k\right)-\tilde J^{\bot}\mathbf{B}\left(e_j,e_k\right)}^2\\
=&\dfrac{1}{2}\abs{\mathbf{B}\left(e_j,\tilde Je_k\right)}^2+\dfrac{1}{2}\abs{\tilde J^{\bot}\mathbf{B}\left(e_j,e_k\right)}^2-\sum_{j,k=1}^2\hin{\mathbf{B}\left(e_j,\tilde Je_k\right)}{\tilde J^{\bot}\mathbf{B}\left(e_j,e_k\right)}\\
=&\abs{\mathbf{B}}^2-\sum_{j,k=1}^2\hin{\mathbf{B}\left(e_j,\tilde Je_k\right)}{\tilde J^{\bot}\mathbf{B}\left(e_j,e_k\right)}.
\end{align*}
Set $e_2=\tilde Je_1=\tilde J_1e_1, \nu_1=\tilde J_2e_1$ and $\nu_2=\tilde J^{\bot}\nu_1=\tilde J_3e_1$, then by applying the Gauss equation and Ricci equation, we obtain
\begin{align*}
\abs{\dif J}^2-\abs{\mathbf{H}}^2=&\abs{\mathbf{B}}^2-\abs{\mathbf{H}}^2-\sum_{\alpha=1}^2\sum_{j,k=1}^{2}\hin{\mathbf{B}\left(e_j,\tilde Je_k\right)}{\nu_{\alpha}}\hin{\nu_{\alpha}}{\tilde J^{\bot}\mathbf{B}(e_j,e_k)}\\
=&\abs{\mathbf{B}}^2-\abs{\mathbf{H}}^2\\
&+\sum_{j,k=1}^{2}\hin{\mathbf{B}\left(e_j,\tilde Je_k\right)}{\nu_{1}}\hin{\mathbf{B}(e_j,e_k)}{\nu_2}-\sum_{j,k=1}^{2}\hin{\mathbf{B}\left(e_j,\tilde Je_k\right)}{\nu_{2}}\hin{\mathbf{B}(e_j,e_k)}{\nu_1}\\
=&\abs{\mathbf{B}}^2-\abs{\mathbf{H}}^2+\sum_{j=1}^2\hin{R^{\bot}\left(\tilde Je_j,e_j\right)\nu_2}{\nu_1}-\sum_{j=1}^2\bar R\left(\tilde Je_j,e_j,\nu_1,\nu_2\right)\\
=&\abs{\mathbf{B}}^2-\abs{\mathbf{H}}^2-2\hin{R^{\bot}\left(e_1,e_2\right)\nu_2}{\nu_1}-2\bar R\left(e_1,e_2,\tilde J_2e_1,\tilde J_2e_2\right)\\
=&\abs{\mathbf{B}}^2-\abs{\mathbf{H}}^2-2\hin{R^{\bot}\left(e_1,e_2\right)\nu_2}{\nu_1}-2\bar R\left(e_1,e_2,e_1,e_2\right)\\
=&-2R(e_1,e_2,e_1,e_2)-2\hin{R^{\bot}\left(e_1,e_2\right)\nu_2}{\nu_1}.
\end{align*}
Namely,
\begin{align}\label{eq:detJ-2}
\abs{\dif J}^2-\abs{\mathbf{H}}^2=&-2\kappa-2\kappa^{\bot}.
\end{align}
Substituting \eqref{eq:detJ-2} into \eqref{eq:detJ-1}, we derive
\begin{align*}
\det\left(\dif J\right)=\kappa+\kappa^{\bot}.
\end{align*}
By applying the Gauss-Bonnet formula, we get
\begin{align*}
4\pi\deg(J)=\int_{\Sigma}\det\left(\dif J\right)=\int_{\Sigma}\kappa+\int_{\Sigma}\kappa^{\bot}=2\pi\chi\left(T\Sigma\right)+2\pi\chi\left(T^{\bot}\Sigma\right),
\end{align*}
i.e.,
\begin{align*}
2\deg(J)=\chi\left(T\Sigma\right)+\chi\left(T^{\bot}\Sigma\right).
\end{align*}
\end{proof}

\begin{cor}Let $\Sigma$ be a closed  surface immersed in a hyperk\"ahler $4$-manifold $M$. Assume
\begin{align*}
\Div\left(\left(J_{\alpha}\mathbf{H}\right)^{\top}\right)=0,\quad\alpha=1,2,3,
\end{align*}
and $2\chi\left(T\Sigma\right)+\chi\left(T^{\bot}\Sigma\right)>0$, then the complex phase map $J$ is holomorphic.
\end{cor}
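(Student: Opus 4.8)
The plan is to show that the two hypotheses force the $(0,1)$-part $\bar\partial J$ to vanish identically, which is precisely the assertion that $J$ is holomorphic. Note first that \autoref{lem:LW} shows $\partial J=0$ characterizes the minimal (equivalently anti-holomorphic) case, so holomorphicity of $J$ means $\bar\partial J\equiv 0$. The opening move is to feed the hypothesis $\Div\left(\left(J_{\alpha}\mathbf{H}\right)^{\top}\right)=0$ for $\alpha=1,2,3$ into the tension-field formula \eqref{eq:tension}: since $J_{\mathbb{S}^2}$ is a linear isomorphism on each tangent space of $\mathbb{S}^2$, the simultaneous vanishing of the three divergences gives $\tau(J)=J_{\mathbb{S}^2}(0,0,0)=0$. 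Hence $J:\Sigma\to\mathbb{S}^2$ is a harmonic map between closed oriented Riemann surfaces, with $\Sigma$ carrying the complex structure $\tilde J$ and $\mathbb{S}^2$ the structure $J_{\mathbb{S}^2}$.

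Next I would invoke the Eells--Wood analysis of harmonic maps between Riemann surfaces. By the Koszul--Malgrange holomorphic structure on the pulled-back bundle $J^{-1}T^{1,0}\mathbb{S}^2$ induced by harmonicity, $\partial J$ is a holomorphic section of $\left(T^{1,0}\Sigma\right)^{*}\otimes J^{-1}T^{1,0}\mathbb{S}^2$; regarding $J$ instead as a harmonic map into the conjugate surface $\overline{\mathbb{S}}^2$ and taking the complex conjugate, $\bar\partial J$ becomes a holomorphic section of $\left(T^{1,0}\Sigma\right)^{*}\otimes J^{-1}T^{1,0}\overline{\mathbb{S}}^2$. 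Consequently each of $\partial J$ and $\bar\partial J$ either vanishes identically or has only isolated zeros, and in the latter case the total zero multiplicity equals the degree of the corresponding line bundle over the compact surface $\Sigma$. Writing $q$ for the total zero multiplicity of $\bar\partial J$, one computes
\begin{align*}
q=\deg\left(\left(T^{1,0}\Sigma\right)^{*}\otimes J^{-1}T^{1,0}\overline{\mathbb{S}}^2\right)=-\chi(T\Sigma)-2\deg(J),
\end{align*}
using that $\left(T^{1,0}\Sigma\right)^{*}$ has degree $-\chi(T\Sigma)$, that $c_1\left(T^{1,0}\mathbb{S}^2\right)=\chi\left(\mathbb{S}^2\right)=2$, and that $\deg(J:\Sigma\to\overline{\mathbb{S}}^2)=-\deg(J)$.

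Now I would substitute the degree identity $2\deg(J)=\chi(T\Sigma)+\chi(T^{\bot}\Sigma)$ established in the preceding theorem to obtain $q=-\left(2\chi(T\Sigma)+\chi(T^{\bot}\Sigma)\right)$. The second hypothesis $2\chi(T\Sigma)+\chi(T^{\bot}\Sigma)>0$ then yields $q<0$. Since the total zero multiplicity of a non-trivial holomorphic section over a compact Riemann surface is non-negative, $\bar\partial J$ cannot be a non-zero section with such a count; therefore $\bar\partial J\equiv 0$, i.e. $J$ is holomorphic, which completes the argument.

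The main obstacle—and the only genuinely non-elementary input—is the justification that $\partial J$ and $\bar\partial J$ behave like holomorphic sections whose (possibly empty) zero sets have non-negative total multiplicity equal to the bundle degree; this rests on the Koszul--Malgrange holomorphic structure together with the harmonicity secured in the first step, and is exactly the Eells--Wood mechanism. Everything else is bookkeeping with the degree formula already proved. One should also double-check the orientation and sign conventions to make sure the negative count lands on $\bar\partial J$ rather than $\partial J$; this is pinned down by \autoref{lem:LW}, where $\partial J=0$ is shown to be equivalent to minimality and hence to anti-holomorphicity, so that a forced vanishing coming from a negative degree yields holomorphicity, as claimed.
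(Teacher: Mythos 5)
Your proof is correct, and it rests on the same two pillars as the paper's: harmonicity of $J$ extracted from \eqref{eq:tension}, and the Eells--Wood principle that for a harmonic map from a closed surface the $(0,1)$-part is either identically zero or has isolated zeros of positive order with a topologically determined count. The difference is in how that count is organized. The paper quotes the pointwise Bochner identity $\Delta\log\abs{\bar\partial J}=\det(\dif J)+\kappa=2\kappa+\kappa^{\bot}$ (valid where $\bar\partial J\neq 0$) and leaves the conclusion implicit: integrating over $\Sigma$, the left side produces $-2\pi$ times the total zero multiplicity of $\bar\partial J$, hence is nonpositive, while Gauss--Bonnet makes the right side equal to $2\pi\left(2\chi(T\Sigma)+\chi(T^{\bot}\Sigma)\right)>0$, forcing $\bar\partial J\equiv 0$. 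You instead realize that zero count globally as the degree $-\chi(T\Sigma)-2\deg(J)$ of the holomorphic line bundle $\left(T^{1,0}\Sigma\right)^{*}\otimes J^{-1}T^{1,0}\overline{\mathbb{S}}^{2}$ (Koszul--Malgrange structure), and then insert the degree identity $2\deg(J)=\chi(T\Sigma)+\chi(T^{\bot}\Sigma)$ from the preceding theorem. The two computations are literally equivalent --- your bundle-degree count is exactly the integrated form of the paper's identity --- so neither argument is stronger; your version makes explicit the zero-counting step the paper suppresses, at the cost of importing the Koszul--Malgrange/Chern-class machinery, whereas the paper stays within its own curvature identities plus Gauss--Bonnet. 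Your closing orientation check is also consistent with the paper's companion corollary: the same count applied to $\partial J$ gives bundle degree $\chi(T^{\bot}\Sigma)$, which recovers the minimality conclusion when $\chi(T^{\bot}\Sigma)<0$.
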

\begin{proof}According to \eqref{eq:tension}, the assumption means that $J$ is a harmonic map. Then this Corollary is a consequence of the following observation: if $J$ is not holomorphic, then 
\begin{align*}
\Delta\log\abs{\bar\partial J}=&\det\left(\dif J\right)+\kappa\\
=&2\kappa+\kappa^{\bot},
\end{align*}
holds when $\bar\partial J\neq0$. 
\end{proof}

Moreover,
\begin{cor}Let $\Sigma$ be a closed  surface immersed in a hyperk\"ahler $4$-manifold $M$. Assume
\begin{align*}
\Div\left(\left(J_{\alpha}\mathbf{H}\right)^{\top}\right)=0,\quad\alpha=1,2,3,
\end{align*}
and $\chi\left(T^{\bot}\Sigma\right)<0$, then $\Sigma$ is minimal.
\end{cor}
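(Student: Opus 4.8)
The plan is to mirror the proof of the preceding corollary, but this time to force the $(1,0)$-part $\partial J$ to vanish instead of $\bar\partial J$. First I would note that the hypothesis $\Div\left(\left(J_{\alpha}\mathbf{H}\right)^{\top}\right)=0$ together with \eqref{eq:tension} gives $\tau(J)=0$, so the complex phase map $J$ is a harmonic map into $\mathbb{S}^2$. By \autoref{lem:LW}, $\Sigma$ is minimal precisely when $J$ is anti-holomorphic, i.e. $\partial J\equiv 0$. Hence it suffices to rule out $\partial J\not\equiv 0$, which I would do by contradiction.

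So suppose $\partial J\not\equiv 0$. Since $J$ is harmonic, $\partial J$ is a holomorphic section of the line bundle $T^{*(1,0)}\Sigma\otimes J^{-1}T^{(1,0)}\mathbb{S}^2$, so its zeros are isolated with positive integer multiplicities $k_1,\dots,k_m$, and near each zero $p_i$ one has the local model $\log\abs{\partial J}=k_i\log\abs{z}+O(1)$. The Bochner formula conjugate to the one used in the preceding corollary reads $\Delta\log\abs{\partial J}=\kappa-\det\left(\dif J\right)$ on $\set{\partial J\neq 0}$, which by $\det\left(\dif J\right)=\kappa+\kappa^{\bot}$ simplifies to $\Delta\log\abs{\partial J}=-\kappa^{\bot}$. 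Accounting for the logarithmic poles, the distributional identity becomes
\[\Delta\log\abs{\partial J}=-\kappa^{\bot}+2\pi\sum_{i=1}^m k_i\,\delta_{p_i}.\]
Integrating over the closed surface $\Sigma$ and using the Gauss--Bonnet formula $\int_{\Sigma}\kappa^{\bot}=2\pi\chi\left(T^{\bot}\Sigma\right)$ yields $0=-2\pi\chi\left(T^{\bot}\Sigma\right)+2\pi\sum_i k_i$, so $\sum_i k_i=\chi\left(T^{\bot}\Sigma\right)$. (Equivalently, this is the index count $\sum_i k_i=\deg\left(T^{*(1,0)}\Sigma\otimes J^{-1}T^{(1,0)}\mathbb{S}^2\right)=-\chi\left(T\Sigma\right)+2\deg(J)=\chi\left(T^{\bot}\Sigma\right)$, using $2\deg(J)=\chi\left(T\Sigma\right)+\chi\left(T^{\bot}\Sigma\right)$.) Since each $k_i\geq 1$, this forces $\chi\left(T^{\bot}\Sigma\right)\geq 0$, contradicting the hypothesis $\chi\left(T^{\bot}\Sigma\right)<0$. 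Therefore $\partial J\equiv 0$, and $\Sigma$ is minimal.

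I expect the main obstacle to be justifying the zero count rigorously rather than the formal integration: one must verify that $\partial J$ is genuinely a holomorphic section when $J$ is harmonic, so that its multiplicities $k_i$ are \emph{positive} integers, and one must control the distributional Laplacian of $\log\abs{\partial J}$ at the poles in order to pass from the pointwise formula $\Delta\log\abs{\partial J}=-\kappa^{\bot}$ to the global integral identity. Once these analytic points are settled, everything else is bookkeeping with the formulas $\det\left(\dif J\right)=\kappa+\kappa^{\bot}$, $2\deg(J)=\chi\left(T\Sigma\right)+\chi\left(T^{\bot}\Sigma\right)$, and \autoref{lem:LW} already established above.
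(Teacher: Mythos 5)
Your proof is correct and takes essentially the same route as the paper: the paper's own proof consists precisely of the observation that for harmonic $J$ one has $\Delta\log\abs{\partial J}=-\det\left(\dif J\right)+\kappa=-\kappa^{\bot}$ wherever $\partial J\neq 0$, combined with \autoref{lem:LW}, leaving the conclusion implicit. Your write-up simply supplies the steps the paper omits — holomorphicity of $\partial J$, positivity of the multiplicities at its isolated zeros, the distributional Laplacian identity, and Gauss--Bonnet for the normal bundle giving $\chi\left(T^{\bot}\Sigma\right)=\sum_i k_i\geq 0$ — and all of these are carried out correctly.
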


\begin{proof}It is a consequence of \autoref{lem:LW} and the following observation: if $J$ is not anti-holomorphic, then 
\begin{align*}
\Delta\log\abs{\partial J}=&-\det\left(\dif J\right)+\kappa\\
=&-\kappa^{\bot},
\end{align*}
holds when $\partial J\neq0$. 
\end{proof}

\section{Nonexistence of  Type \texorpdfstring{\Rmn{1}}{I} singularity of MCF} 

In this section, we consider the mean curvature flow from a closed  surface $\Sigma$ in a hyperk\"ahler $4$-manifold $M$, i.e, we consider
\begin{align}\label{eqn-MCF}
\begin{cases}
\dfrac{\partial F}{\partial t}=\mathbf{H},&\Sigma\times[0,T);\\
F\left(\cdot,t\right)=F_0\left(\cdot\right),&\Sigma.
\end{cases}
\end{align}
Here $F_0:\Sigma\To M$ is an  isometric immersion. This flow blows up when
\begin{align*}
\limsup_{t\to T}\max_{\Sigma_t}\abs{\mathbf{B}}=\infty.
\end{align*}
We say that the mean curvature flow $F$ has Type \Rmn{1} singularity at $T>0$ if
\begin{align*}
\limsup_{t\to T}\sqrt{T-t}\max_{\Sigma_t}\abs{\mathbf{B}}\leq C,
\end{align*}
for some positive constant $C$.

\vskip12pt

We shall need the following theorem which is owed to Leung-Wan (see \cite[Theorem 3.4]{LeuWan07}), here we would like to give an alternative proof.

\begin{theorem}[\cite{LeuWan07}]\label{thm: harmonic-heat}  
The complex phase maps of the mean curvature flow \eqref{eqn-MCF} $J: \Sigma_t \To \mathbb{S}^2$ form an evolving harmonic map heat flow, i.e., 
\begin{align*}
\dfrac{\partial J}{\partial t} = \tau(J),
\end{align*}
where $\tau(J)$ is the tension field of $J$ with respect to the induced metric $g_t$ on $\Sigma_t$.
\end{theorem}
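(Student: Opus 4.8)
The plan is to compute $\partial_t J$ directly from the definition of the complex phase map and match it against the tension field $\tau(J)$, which has already been computed explicitly in the preceding theorem as \eqref{eq:tension}. The starting point is the pointwise description of $J$: by \autoref{lem:hyper-Lagrangian}, if $\{e_1,e_2\}$ is a positively oriented orthonormal frame of $T_x\Sigma_t$ with $e_2=\tilde Je_1$, then $\lambda_\alpha=\langle J_\alpha e_1,e_2\rangle=\omega_\alpha(e_1,e_2)$, where $\omega_\alpha$ is the (parallel) K\"ahler form of $J_\alpha$. Thus $J=(\lambda_1,\lambda_2,\lambda_3)$ is, up to the parallel forms $\omega_\alpha$, nothing but the oriented Gauss map of $\Sigma_t$, and the whole computation reduces to understanding how the oriented tangent plane evolves under the flow.

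Second, I would carry out the evolution computation in a local coordinate frame $F_i=\partial_iF$, writing $\lambda_\alpha=\omega_\alpha(F_1,F_2)(\det g)^{-1/2}$ to account for the non-normalized frame. Differentiating in $t$ and using $\partial_tF=\mathbf{H}$, the parallelism $\bar\nabla\omega_\alpha=0$, the commutation $\bar\nabla_{\partial_t}F_i=\bar\nabla_i\mathbf{H}$, and the standard evolution $\partial_tg_{ij}=-2\langle\mathbf{H},\mathbf{B}(F_i,F_j)\rangle$, one splits $\bar\nabla_i\mathbf{H}=\nabla_i^\perp\mathbf{H}-\mathbf{A}^{\mathbf{H}}e_i$ into normal and tangential parts. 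The tangential part produces a term $-|\mathbf{H}|^2\lambda_\alpha$, which cancels exactly against the contribution $+|\mathbf{H}|^2\lambda_\alpha$ coming from $\partial_t(\det g)^{-1/2}=|\mathbf{H}|^2(\det g)^{-1/2}$ (evaluated at a point where the frame is orthonormal). What survives is the purely normal, antisymmetric combination
\[
\partial_t\lambda_\alpha=\langle J_\alpha\nabla_1^\perp\mathbf{H},e_2\rangle-\langle J_\alpha\nabla_2^\perp\mathbf{H},e_1\rangle .
\]

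Third, I would match this expression with $\tau(J)$. Writing $D_\beta:=\Div\left(J_\beta\mathbf{H}\right)^{\top}=\langle J_\beta\nabla_1^\perp\mathbf{H},e_1\rangle+\langle J_\beta\nabla_2^\perp\mathbf{H},e_2\rangle$ as in \eqref{eq:divJH}, the goal becomes $\partial_t\lambda_\alpha=[J_{\mathbb{S}^2}(D_1,D_2,D_3)]_\alpha$, which is precisely \eqref{eq:tension}. The mechanism is the tangent--sphere correspondence \eqref{eq:3-1}: for a normal vector $V$, the triple $P(V,X):=(\langle V,J_1X\rangle,\langle V,J_2X\rangle,\langle V,J_3X\rangle)$ obeys $P(V,\tilde JX)=J_{\mathbb{S}^2}P(V,X)$. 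Using $e_2=\tilde Je_1$, $e_1=-\tilde Je_2$, and the skew-symmetry $\langle J_\alpha V,X\rangle=-\langle V,J_\alpha X\rangle$, the antisymmetric combination above becomes $-J_{\mathbb{S}^2}$ applied to $P(\nabla_1^\perp\mathbf{H},e_1)+P(\nabla_2^\perp\mathbf{H},e_2)=-(D_1,D_2,D_3)$, hence equals $[J_{\mathbb{S}^2}(D_1,D_2,D_3)]_\alpha=\tau(J)_\alpha$. Along the way one records $\sum_\alpha\lambda_\alpha D_\alpha=\langle\tilde J^{\bot}\nabla_i^\perp\mathbf{H},e_i\rangle=0$, so that $(D_1,D_2,D_3)\in T_{J(x)}\mathbb{S}^2$ and $J_{\mathbb{S}^2}$ is legitimately applied.

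I expect the main obstacle to be the bookkeeping in this last step: converting the \emph{curl-type} antisymmetric combination of normal derivatives of $\mathbf{H}$ into the \emph{divergence-type} expression $J_{\mathbb{S}^2}\circ\Div$, which hinges on trading the rotation $\tilde J$ on the tangent side for the rotation $J_{\mathbb{S}^2}$ on the sphere side via \eqref{eq:3-1}. A secondary care point is keeping the two $|\mathbf{H}|^2\lambda_\alpha$ cancellations straight (tangential second fundamental form against the evolving area element), since a sign slip there would destroy the clean heat-flow form; computing at a fixed point with a geodesic orthonormal frame at the chosen time keeps all of these terms manageable.
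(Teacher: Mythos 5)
Your proposal is correct: the two $\abs{\mathbf{H}}^2\lambda_\alpha$ cancellations and the final conversion of the curl-type expression into $J_{\mathbb{S}^2}$ applied to the divergence triple all check out, and the resulting identity agrees with the paper's. The overall strategy coincides with the paper's proof --- both compute $\partial_t\lambda_\alpha$ directly and then match it against \eqref{eq:tension} by means of \eqref{eq:divJH} and the rotation identity \eqref{eq:3-1} --- but the intermediate bookkeeping is genuinely different. The paper works with an evolving orthonormal frame $\set{e_i}$, differentiates the identity $\lambda_\alpha\delta_{ij}=\hin{J_\alpha e_i}{\tilde Je_j}$ in time, and disposes of the tangential (frame-rotation) terms by multiplying by $\lambda_\alpha$ and summing over $\alpha$, i.e.\ by exploiting the constraint $\sum_\alpha\lambda_\alpha^2=1$; no $\abs{\mathbf{H}}^2$ terms ever appear there. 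You instead fix a coordinate frame, write $\lambda_\alpha=\omega_\alpha(F_1,F_2)(\det g)^{-1/2}$ (the Gauss-map viewpoint), and let the shape-operator contribution $-\abs{\mathbf{H}}^2\lambda_\alpha$ cancel against the contribution $+\abs{\mathbf{H}}^2\lambda_\alpha$ from the evolving area element. Your route avoids evolving orthonormal frames altogether (and with them the slightly delicate requirement that $[e_i,\partial_t]$ and $[\tilde Je_i,\partial_t]$ be tangential), and it makes transparent that $J$ is exactly a Gauss map twisted by the parallel forms $\omega_\alpha$; the paper's route avoids the metric-determinant computation and stays entirely within the orthonormal-frame identities of Section 4. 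Both arrive at the same expression $\partial_t\lambda_\alpha=\sum_j\hin{\nabla^{\bot}_{\tilde Je_j}\mathbf{H}}{J_\alpha e_j}$ before invoking \eqref{eq:tension}, and your closing observation that $\sum_\alpha\lambda_\alpha D_\alpha=0$, so that $(D_1,D_2,D_3)$ really lies in $T_{J(x)}\mathbb{S}^2$ and applying $J_{\mathbb{S}^2}$ is legitimate, is a point the paper leaves implicit.
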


\begin{proof} Let $\set{e_i}$ be a local orthonormal evolving frame field on $\Sigma_t$, then both  $[e_i,\partial_t]$ and  $[\tilde Je_i,\partial_t]$ are local tangent vector fields and
\begin{align*}
\lambda_{\alpha}\delta_{ij}=\hin{J_{\alpha}e_i}{\tilde Je_j}.
\end{align*}
Differentiate with respect to t on both sides of the above equality, 
\begin{align*}
&\dfrac{\partial\lambda_{\alpha}}{
\partial t}\delta_{ij}=\dfrac{\partial}{\partial t}\hin{J_{\alpha}e_i}{\tilde Je_j}
=\hin{J_{\alpha}\bar\nabla_{\partial_t}e_i}{\tilde Je_j}+\hin{J_{\alpha}e_i}{\bar\nabla_{\partial_t}\tilde Je_j}\\
=&\lambda_{\alpha}\hin{\tilde J\left(\bar\nabla_{\partial_t}e_i\right)^{\top}}{\tilde Je_j}+\hin{J_{\alpha}\left(\bar\nabla_{\partial_t}e_i\right)^{\bot}}{\tilde Je_j}
+\lambda_{\alpha}\hin{\tilde Je_i}{\left(\bar\nabla_{\partial_t}\tilde Je_j\right)^{\top}}+\hin{J_{\alpha}e_i}{\left(\bar\nabla_{\partial_t}\tilde Je_j\right)^{\bot}}\\
=&\lambda_{\alpha}\hin{\bar\nabla_{\partial_t}e_i}{\tilde Je_j}+\hin{J_{\alpha}\left(\bar\nabla_{e_i}\partial_t\right)^{\bot}}{\tilde Je_j}
+\lambda_{\alpha}\hin{\tilde Je_i}{\left(\bar\nabla_{\partial_t}\tilde Je_j\right)^{\top}}+\hin{J_{\alpha}e_i}{\left(\bar\nabla_{\tilde Je_j}\partial_t\right)^{\bot}}.
\end{align*}
Since $\sum_{\alpha=1}^3\lambda_{\alpha}^2=1$, we get
\begin{align*}
0=&\hin{\bar\nabla_{\partial_t}e_i}{\tilde Je_j}+\hin{\tilde Je_i}{\left(\bar\nabla_{\partial_t}\tilde Je_j\right)^{\top}}.
\end{align*}
Thus
\begin{align*}
\dfrac{\partial\lambda_{\alpha}}{
\partial t}\delta_{ij}=&\hin{J_{\alpha}\left(\bar\nabla_{e_i}\partial_t\right)^{\bot}}{\tilde Je_j}+\hin{J_{\alpha}e_i}{\left(\bar\nabla_{\tilde Je_j}\partial_t\right)^{\bot}}.
\end{align*}
Consequently,
\begin{align*}
2\dfrac{\partial\lambda_{\alpha}}{\partial t}=&\sum_{j=1}^{2}\hin{J_{\alpha}\left(\bar\nabla_{e_j}\partial_t\right)^{\bot}}{\tilde Je_j}+\sum_{j=1}^{2}\hin{J_{\alpha}e_j}{\left(\bar\nabla_{\tilde Je_j}\partial_t\right)^{\bot}}
=2\sum_{j=1}^{2}\hin{J_{\alpha}e_j}{\left(\bar\nabla_{\tilde Je_j}\partial_t\right)^{\bot}}.
\end{align*}
By (\ref{eq:divJH}) and (\ref{eq:3-1}), we get
\[
(\tau(J))^\alpha = - \sum_{j=1}^2 \hin{ \nabla^{\bot}_{e_j} \mathbf{H}}{J_\alpha \tilde{J} e_j} = -  \sum_{j=1}^2 \hin{ \nabla^{\bot}_{\tilde{J}e_j} \mathbf{H}}{-J_\alpha  e_j } =  \sum_{j=1}^2 \hin{ \nabla^{\bot}_{\tilde{J}e_j} \mathbf{H}}{ J_\alpha  e_j}
\]
Hence
\begin{align*}
\dfrac{\partial \lambda_\alpha}{\partial t}-(\tau(J))^\alpha =\sum_{j=1}^2\hin{\left(\bar\nabla_{\tilde Je_j}\left(\frac{\partial F}{\partial t} - \mathbf{H}\right) \right)^{\bot}}{J_{\alpha}e_j} = 0, \quad  \alpha = 1, 2, 3.
\end{align*}
Namely, 
\[
\frac{\partial J}{\partial t} = \tau(J).
\]
\end{proof}

Next, we show that the complex phase map is a generalized harmonic map (cf. \cite{CheJosWan15})

\begin{theorem}\label{thm:self-shrinker}Let $X:\Sigma\To\mathbb{R}^4$ be a self-shrinker, i.e., $\mathbf{H}=-\frac12X^{\bot}$, then the complex phase map $J:\Sigma\To\mathbb{S}^2$ satisfies
\begin{align*}
\tau\left(J\right)=\dfrac12\dif J\left(X^{\top}\right).
\end{align*}

\end{theorem}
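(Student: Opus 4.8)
The plan is to start from the tension-field formula already established for surfaces. By \eqref{eq:tension} together with \eqref{eq:divJH}, it suffices to evaluate the three components $\Div\left(J_\alpha\mathbf{H}\right)^{\top}=\sum_{j=1}^2\hin{J_\alpha\nabla^{\bot}_{e_j}\mathbf{H}}{e_j}$ under the self-shrinker hypothesis $\mathbf{H}=-\frac12 X^{\bot}$, and then act by $J_{\mathbb{S}^2}$. Thus the first task is to compute $\nabla^{\bot}_{e_j}\mathbf{H}$ explicitly. Here I would exploit that the ambient space is flat, so the position vector satisfies $\bar\nabla_{e_j}X=e_j$. Writing $X=X^{\top}+X^{\bot}$ and applying the Gauss and Weingarten formulas to each summand, the normal part of $e_j=\bar\nabla_{e_j}X^{\top}+\bar\nabla_{e_j}X^{\bot}$ gives $\nabla^{\bot}_{e_j}X^{\bot}=-\mathbf{B}\left(e_j,X^{\top}\right)$, hence $\nabla^{\bot}_{e_j}\mathbf{H}=\frac12\mathbf{B}\left(e_j,X^{\top}\right)$. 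Substituting into \eqref{eq:divJH} and using the anti-symmetry of $J_\alpha$ yields $\Div\left(J_\alpha\mathbf{H}\right)^{\top}=-\frac12\sum_{j=1}^2\hin{\mathbf{B}\left(e_j,X^{\top}\right)}{J_\alpha e_j}$.

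The remaining work is purely algebraic. Applying the structural identity \eqref{eq:0} with the substitution $(X,Y,Z)=\left(e_j,X^{\top},e_j\right)$ and summing over $j$ converts the sum into $\hin{\mathbf{H}}{J_\alpha X^{\top}}-\hin{\tilde J X^{\top}}{\nabla\lambda_\alpha}$, so that $\Div\left(J_\alpha\mathbf{H}\right)^{\top}=-\frac12\hin{\mathbf{H}}{J_\alpha X^{\top}}+\frac12\hin{\tilde J X^{\top}}{\nabla\lambda_\alpha}$. In vector form the three components read $-\frac12 H\left(X^{\top}\right)+\frac12\dif J\left(\tilde J X^{\top}\right)$, where $H$ is the curvature form of \autoref{lem:LW} and I have used that $\hin{\mathbf{H}}{J_\alpha X^{\top}}$ is the $\alpha$-component of $H\left(X^{\top}\right)$ while $\hin{\tilde J X^{\top}}{\nabla\lambda_\alpha}$ is the $\alpha$-component of $\dif J\left(\tilde J X^{\top}\right)$. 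Feeding this into \eqref{eq:tension} and invoking the identity $H=\dif J\circ\tilde J+J_{\mathbb{S}^2}\circ\dif J$ established in the proof of \autoref{lem:LW}, I would expand $J_{\mathbb{S}^2}H\left(X^{\top}\right)=J_{\mathbb{S}^2}\dif J\left(\tilde J X^{\top}\right)+J_{\mathbb{S}^2}^2\dif J\left(X^{\top}\right)$; since $J_{\mathbb{S}^2}^2=-1$, the $J_{\mathbb{S}^2}\dif J\left(\tilde J X^{\top}\right)$ contributions cancel and one is left precisely with $\tau(J)=\frac12\dif J\left(X^{\top}\right)$.

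The main obstacle is the bookkeeping in this last step: one must correctly track the action of $J_{\mathbb{S}^2}$ on the two pieces $H\left(X^{\top}\right)$ and $\dif J\left(\tilde J X^{\top}\right)$, and it is exactly the relation $J_{\mathbb{S}^2}^2=-1$ that forces the $\dif J\left(\tilde J X^{\top}\right)$ terms to collapse and isolates the clean coefficient $\frac12$. The computation of $\nabla^{\bot}\mathbf{H}$ is routine once the flatness of $\mathbb{R}^4$ is invoked, so the real content lies in organizing the algebra so that the self-shrinker term $\frac12\mathbf{B}\left(\cdot,X^{\top}\right)$ reassembles, via \eqref{eq:0} and the $H$--$\dif J$ relation, into the gradient direction $\frac12\dif J\left(X^{\top}\right)$.
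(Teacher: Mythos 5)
Your proposal is correct: every step checks out against identities the paper actually establishes, namely $\nabla^{\bot}_{e_j}\mathbf{H}=\frac12\mathbf{B}\left(e_j,X^{\top}\right)$ from flatness of $\mathbb{R}^4$ and the self-shrinker equation, the substitution of \eqref{eq:0} with $(X,Y,Z)=\left(e_j,X^{\top},e_j\right)$, and the final cancellation via $H=\dif J\circ\tilde J+J_{\mathbb{S}^2}\circ\dif J$ together with $J_{\mathbb{S}^2}^2=-1$. Your overall strategy coincides with the paper's — reduce to $\Div\left(J_{\alpha}\mathbf{H}\right)^{\top}$ through \eqref{eq:tension} and \eqref{eq:divJH}, then exploit the self-shrinker equation and the Section 3 structural identities — but the algebra is organized differently at two points. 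The paper first rotates the frame in \eqref{eq:divJH}, replacing $e_j$ by $\tilde Je_j$ and invoking \eqref{eq:3-1}, so that the outer $J_{\mathbb{S}^2}$ in \eqref{eq:tension} is absorbed at once by $J_{\mathbb{S}^2}^2=-1$; it then applies \eqref{eq:1-1}, which is \eqref{eq:0} with the substitution $\left(\tilde Je_j,X,e_j\right)$, where the mean curvature term drops out automatically because $\sum_j\mathbf{B}\left(\tilde Je_j,e_j\right)=0$ by symmetry of $\mathbf{B}$. Consequently the quantity $H\left(X^{\top}\right)$ never appears in the paper's computation. Your substitution instead produces the term $\hin{\mathbf{H}}{J_{\alpha}X^{\top}}$, i.e.\ $H\left(X^{\top}\right)$, explicitly, and you must cancel it afterwards using the relation $H=\dif J\circ\tilde J+J_{\mathbb{S}^2}\circ\dif J$ from the proof of \autoref{lem:LW}. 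The two routes are equivalent in substance, since both ultimately rest on \eqref{eq:0} and on $J_{\mathbb{S}^2}^2=-1$: the paper's version is slightly leaner because the $\mathbf{H}$-trace is suppressed at the source, while yours keeps a fixed frame throughout and displays transparently how the self-shrinker term $\frac12\mathbf{B}\left(\cdot,X^{\top}\right)$ reassembles into $\frac12\dif J\left(X^{\top}\right)$.
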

\begin{proof}
Since
\begin{align*}
\bar\nabla_{e_j}X^{\bot}=&-\mathbf{A}^{X^{\bot}}\left(e_j\right)-\mathbf{B}\left(e_j,X^{\top}\right),
\end{align*}
we get
\begin{align*}
\sum_{j=1}^2\hin{\nabla^{\bot}_{\tilde Je_j}\mathbf{H}}{J_{\alpha}e_j}=&-\dfrac12\sum_{j=1}^2\hin{\nabla^{\bot}_{\tilde Je_j}X^{\bot}}{J_{\alpha}e_j}\\
=&-\dfrac12\sum_{j=1}^2\hin{\bar\nabla_{\tilde Je_j}X^{\bot}}{J_{\alpha}e_j}-\dfrac12\sum_{j=1}^2\hin{\mathbf{A}^{X^{\bot}}\left(\tilde Je_j\right)}{J_{\alpha}e_j}\\
=&\dfrac12\sum_{j=1}^2\hin{\mathbf{A}^{X^{\bot}}\left(\tilde Je_j\right)+\mathbf{B}\left(\tilde Je_j,X^{\top}\right)}{J_{\alpha}e_j}-\dfrac12\sum_{j=1}^2\hin{\mathbf{A}^{X^{\bot}}\left(\tilde Je_j\right)}{J_{\alpha}e_j}\\
=&\dfrac12\sum_{j=1}^2\hin{\mathbf{A}^{\left(J_{\alpha}e_j\right)^{\bot}}\left(\tilde Je_j\right)}{X}
=\dfrac12\hin{\nabla\lambda_{\alpha}}{X}.
\end{align*}
The last equality follows from \eqref{eq:1-1}. Applying \eqref{eq:divJH}, we conclude
\begin{align*}
\tau(J)=&J_{\mathbb{S}^2}\left(\Div\left(J_{1}\mathbf{H}\right)^{\top}, \Div\left(J_{2}\mathbf{H}\right)^{\top}, \Div\left(J_{3}\mathbf{H}\right)^{\top}\right)\\
=&J_{\mathbb{S}^2}\left(\sum_{j=1}^2\hin{J_1\nabla_{e_j}^{\bot}\mathbf{H}}{e_j}, \sum_{j=1}^2\hin{J_2\nabla_{e_j}^{\bot}\mathbf{H}}{e_j},\sum_{j=1}^2\hin{J_3\nabla_{e_j}^{\bot}\mathbf{H}}{e_j}\right)\\
=&-J_{\mathbb{S}^2}\left(\sum_{j=1}^2\hin{\nabla_{\tilde Je_j}^{\bot}\mathbf{H}}{J_1\tilde Je_j}, \sum_{j=1}^2\hin{\nabla_{\tilde Je_j}^{\bot}\mathbf{H}}{J_2\tilde Je_j},\sum_{j=1}^2\hin{\nabla_{\tilde Je_j}^{\bot}\mathbf{H}}{J_3\tilde Je_j}\right)\\
=&\left(\sum_{j=1}^2\hin{\nabla_{\tilde Je_j}^{\bot}\mathbf{H}}{J_1e_j}, \sum_{j=1}^2\hin{\nabla_{\tilde Je_j}^{\bot}\mathbf{H}}{J_2e_j},\sum_{j=1}^2\hin{\nabla_{\tilde Je_j}^{\bot}\mathbf{H}}{J_3e_j}\right)\\
=&\dfrac12\dif J\left(X^{\top}\right).
\end{align*}

\end{proof}

Using \autoref{thm:self-shrinker} and integral method, we shall prove \autoref{thm:rigid}.

Denote $\overline{\mathbb{S}}^1_{+}\coloneqq\set{\left. (x_1, x_2, x_3) \in \mathbb{S}^2\right| x_1 = 0, x_2\geq 0 }$ and put $\mathbb{V}\coloneqq\mathbb{S}^2 \setminus \overline{\mathbb{S}}^1_{+} $.

\begin{proof}[Proof of \autoref{thm:rigid}]
Firstly, we shall prove that when the image of the complex phase map $J$ is contained in an open hemisphere, then $\Sigma $ must be a plane. 

Indeed,  according to \autoref{thm:self-shrinker}, we know that 
\begin{align*}
\tau\left(J\right)=\dfrac12\dif J\left(X^{\top}\right).
\end{align*}
Let $\rho$ be the distance function on $\mathbb{S}^2$, and define $\psi\coloneqq(1-\cos \rho)$. 
Let $ u\coloneqq\psi \circ J $. Then
\begin{align*}
\Delta_{-\frac{X^{\top}}{2}} u = \Delta u - \frac{1}{2}\hin{ X^{\top}}{\nabla u} = \sum_{j=1}^2\mathrm{Hess(\psi)}\left(\dif J\left(e_j\right),\dif J\left(e_j\right)\right) + d\varphi\left(\tau(J) + dJ\left(-\frac{X^{\top}}{2}\right)\right)= (\cos\rho )\abs{dJ}^2 \geq 0.
\end{align*}
with the equality holds iff $\dif J=0$. Since 
\begin{align*}
\Delta_{-\frac{X^{\top}}{2}} u = \Delta u - \frac{1}{2}\hin{ X^{\top}}{\nabla u} = e^{\frac{\abs{X}^2}{4}}{\rm div}\left(e^{-\frac{\abs{X}^2}{4}}\nabla u\right).
\end{align*}
It follows that
\begin{align*}
\Div\left(e^{-\frac{\abs{X}^2}{4}}\nabla u\right)\geq0.
\end{align*}
For every compactly supported Lipschitz function $\eta$ on $\mathbb{R}^4$, since $\Sigma$ is proper, we know that $\eta\vert_{\Sigma}$ is also compactly supported in $\Sigma$. Consequently,
\begin{align*}
\int_{\Sigma}\Div\left(e^{-\frac{\abs{X}^2}{4}}\nabla u\right)\eta^2 u\geq0.
\end{align*}
 which implies that
 \begin{align*}
 \int_{\Sigma}\abs{\nabla u}^2\eta^2e^{-\frac{\abs{X}^2}{4}}+2\int_{\Sigma}\hin{\nabla u}{\nabla\eta}\eta ue^{-\frac{\abs{X}^2}{4}}\leq0.
 \end{align*}
 Notice that
 \begin{align*}
 \abs{\nabla\left(\eta u\right)}^2=\abs{\nabla\eta}^2u^2+\eta^2\abs{\nabla u}^2+2\hin{\nabla u}{\nabla\eta}\eta u,
 \end{align*}
 we obtain
 \begin{align*}
 \int_{\Sigma}\abs{\nabla\left(\eta u\right)}^2e^{-\frac{\abs{X}^2}{4}}\leq\int_{\Sigma}\abs{\nabla\eta}^2u^2e^{-\frac{\abs{X}^2}{4}}.
 \end{align*}
For every $R>0$, choose
 \begin{align*}
 \eta(y)=\begin{cases}
 1,&\abs{y}\leq R;\\
 \dfrac{2R-\abs{y}}{R},&R<\abs{y}<2R;\\
 0,&\abs{y}\geq 2R.
 \end{cases}
 \end{align*}
 Then $\nabla\eta=0$ for $\abs{x}<R$ or $\abs{x}>2R$. For $R<\abs{x}<2R$,
 \begin{align*}
 \abs{\nabla\eta(x)}\leq\dfrac{1}{R}.
 \end{align*}
 Consequently,
 \begin{align*}
 \int_{\Sigma\cap B_R}\abs{\nabla u}^2e^{-\frac{\abs{X}^2}{4}}\leq\dfrac{1}{R^2}\int_{\Sigma\cap\left(B_{2R}\setminus B_{R}\right)}u^2e^{-\frac{\abs{X}^2}{4}}\leq\dfrac{1}{R^2}\int_{\Sigma\cap\left(B_{2R}\setminus B_{R}\right)}e^{-\frac{\abs{X}^2}{4}}.
 \end{align*}
 Since $\Sigma$ is proper, we have
 \begin{align*}
 \int_{\Sigma}e^{-\frac{\abs{X}^2}{4}}<\infty.
 \end{align*}
 Letting $R\to\infty$, we get
 \begin{align*}
 \int_{\Sigma}\abs{\nabla u}^2e^{-\frac{\abs{X}^2}{4}}=0.
 \end{align*}
 This implies that $u \equiv constant$, namely  $J \equiv constant$. Hence $\Sigma$ is minimal and $X^{\bot}=0$. The fact $X=X^{\top}$ gives $\mathbf{B}\left(X,\cdot\right)=0$. By the minimal condition, we know that $\Sigma$ is totally geodesic. Since $\Sigma$ is complete, we conclude that $\Sigma$ is a plane.
 
 Secondly, we consider the projection $\pi$ from $\mathbb{S}^2$ onto $\overline{\mathbb{D}}^2$ (here $\overline{\mathbb{D}}^2$ is a 2-dimensional closed unit disk)
 
 \[  
 \pi: \mathbb{S}^2 \To \overline{\mathbb{D}}^2,\quad (x_1, x_2, x_3)\to (x_1, x_2).
 \]
 Then $x \in \mathbb{V}$ if and only if $\pi(x)$ is contained in the domain obtained by removing the radius connecting $(0, 0)$ and $(0, 1)$ from the closed unit disk. Therefore for any $x \in \mathbb{V}$, there exists a unique $(0, 1]-$valued function $r$ and a unique $(0, 2\pi)-$valued function $\varphi$ on $\mathbb{V}$, such that 
 \[
 \pi(x) = (r\sin\varphi, r\cos\varphi).
 \]
 Direct computation gives us (see  \cite[formula (2.12)]{JosXinYan12})
 \[
 {\rm Hess}\varphi = - r^{-1} (d\varphi \otimes \dif r + \dif r \otimes d\varphi).
 \]
 It follows that  
 \begin{align*}
 \Delta_{-\frac{X^{\top}}{2}}(\varphi\circ J) = {\rm Hess}\varphi (dJ(e_i), dJ(e_i)) = -2 (r\circ J)^{-1} \left< \nabla(r\circ J), \nabla (\varphi \circ J) \right>. 
 \end{align*}
 Thus 
 \begin{align*}
& \Div\left((r\circ J)^2e^{-\frac{\abs{X}^2}{4}}\nabla (\varphi\circ J)\right) 
  =  (r\circ J)^2 \Div\left( e^{-\frac{\abs{X}^2}{4}}\nabla(\varphi\circ J) \right) + \hin{ \nabla (r\circ J)^2}{ e^{-\frac{\abs{X}^2}{4}}\nabla(\varphi \circ J)} \\
  =& e^{-\frac{\abs{X}^2}{4}}(r\circ J)^2  \Delta_{-\frac{X^{\top}}{2}}(\varphi\circ J) + \hin{\nabla (r\circ J)^2}{ e^{-\frac{\abs{X}^2}{4}}\nabla(\varphi \circ J)}  \\
 =& -2 \hin{(r\circ J) \nabla(r\circ J)}{e^{-\frac{\abs{X}^2}{4}}\nabla (\varphi \circ J)} + \hin{ \nabla(r\circ J)^2}{ e^{-\frac{\abs{X}^2}{4}}\nabla(\varphi\circ J)} = 0.
 \end{align*}
 
 Let $\eta$ be as before, multiplying $\eta^{2}  \cdot (\varphi\circ J) $ with both sides of the above equality, 
  \begin{align*}
  0= & \int_{\Sigma} \eta^2 \cdot (\varphi\circ J) \Div \left( (r\circ J)^2 e^{-\frac{\abs{X}^2}{4}} \nabla(\varphi \circ J) \right) \\
  =& \int_\Sigma \Div \left( \eta^2 \cdot (\varphi\circ J) (r\circ J)^2 e^{-\frac{\abs{X}^2}{4}} \nabla(\varphi \circ J)  \right) - \int_\Sigma \hin{ \nabla (\eta^2 \cdot (\varphi \circ J))}{\nabla (\varphi\circ J)}(r\circ J)^2 e^{-\frac{\abs{X}^2}{4}}\\
  =& -\int_\Sigma \eta^2 \abs{\nabla (\varphi\circ J)}^2 ( r\circ J )^2 e^{-\frac{\abs{X}^2}{4}} - 2\int_\Sigma \hin{(\varphi\circ J) \nabla \eta}{ \eta \nabla(\varphi\circ J)} (r\circ J)^2 e^{-\frac{\abs{X}^2}{4}} \\
  \leq & -\int_{\Sigma} \eta^2 \abs{\nabla (\varphi\circ J)}^2 ( r\circ J )^2 e^{-\frac{\abs{X}^2}{4}}  + 2 \int_\Sigma (\varphi \circ J)^2\abs{\nabla \eta}^2 (r\circ J)^2 e^{-\frac{\abs{X}^2}{4}}\\
  &+ \frac{1}{2} \int_\Sigma  \eta^2 \abs{\nabla (\varphi\circ J)}^2 ( r\circ J )^2 e^{-\frac{\abs{X}^2}{4}}. 
  \end{align*}
 Therefore we obtain
 \begin{align*}
 \int_\Sigma \eta^2 \abs{\nabla (\varphi\circ J)}^2 ( r\circ J )^2 e^{-\frac{\abs{X}^2}{4}} \leq 4 \int_\Sigma (\varphi \circ J)^2|\nabla \eta|^2 (r\circ J)^2 e^{-\frac{\abs{X}^2}{4}}.
 \end{align*}
 It follows that
 \begin{align*}
 \int_{\Sigma \cap B_R} \abs{\nabla(\varphi \circ J)}^2 (r\circ J)^2 e^{-\frac{\abs{X}^2}{4}} \leq & \int_\Sigma \eta^2 |\nabla (\varphi\circ J)|^2 (r\circ J)^2 e^{-\frac{\abs{X}^2}{4}}\\
  \leq & 4 \int_\Sigma (\varphi\circ J)^2 \abs{\nabla \eta}^2 (r\circ J)^2 e^{-\frac{\abs{X}^2}{4}}  
 \leq \frac{16 \pi^2}{R^2} \int_{\Sigma \cap (B_{2R}\backslash B_R)} e^{-\frac{\abs{X}^2}{4}}.
 \end{align*}
 Letting $R\to \infty$, then we derive $\abs{\nabla (\varphi\circ J)} \equiv 0$. Thus $ \varphi\circ J \equiv \varphi_0 \in (0, 2\pi)$.
 
 Denote $b_0\coloneqq(\sin \varphi_0, \cos \varphi_0, 0)$, note that for any $p\in \Sigma$, $J(p) = ((r\circ J(p)) \sin(\varphi\circ J(p)), (r\circ J(p)) \cos(\varphi\circ J(p)), x_3 )$, then for any $p \in \Sigma$, the inner product of $J(p)$ and $b_0$ in $\mathbb{R}^3$ is $\langle J(p), b_0 \rangle = r(J(p))(\sin^2\varphi_0 + \cos^2\varphi_0) = r(J(p))>0$. This implies that the image of $J$ is contained in an open hemisphere centered at $b_0$. 
 Hence $\Sigma$ is a plane.
 \end{proof}

\begin{eg}[Cylinder]\label{eg:cylinder}
Consider the cylinder 
\begin{align*}
    \Sigma^2\coloneqq\set{ (x^1, x^2, x^3, x^4) \in \mathbb{R}^4 | (x^1)^2 + (x^2)^2 = 1, x^4 = 0 }\subset\mathbb{R}^4,
\end{align*}
which is a nontrivial self-shrinker. It is easy to see that $\nu\coloneqq x^1 \frac{\partial}{\partial x^1} + x^2 \frac{\partial }{\partial x^2}$ and $\frac{\partial}{\partial x^4}$ are normal vectors of $\Sigma$ in $\mathbb{R}^4$ and $e_1\coloneqq -x^2 \frac{\partial}{\partial x^1} + x^1 \frac{\partial}{\partial x^2}, e_2\coloneqq \frac{\partial}{\partial x^3}$ are tangent vectors of $\Sigma$. Let $\tilde{J}$ be the almost complex  structure on $\Sigma$ with $\tilde{J}e_1 = e_2, \tilde{J}e_2 = - e_1$. 

In $\mathbb{R}^4$, under the natural basis $\set{ \frac{\partial}{\partial x^1}, \frac{\partial}{\partial x^2}, \frac{\partial}{\partial x^3}, \frac{\partial}{\partial x^4}}$, we have
\begin{align*}
J_1=\begin{pmatrix}
         0 & -1 & 0 & 0 \\
         1 & 0 & 0 & 0 \\
         0 & 0 & 0 & -1 \\
         0 & 0 & 1 & 0
        \end{pmatrix},\quad
       J_2=\begin{pmatrix}
         0 & 0 & -1 & 0 \\
         0 & 0 & 0 & 1 \\
         1 & 0 & 0 & 0 \\
         0 & -1 & 0 & 0
        \end{pmatrix}, \quad
       J_3=\begin{pmatrix}
         0 & 0 & 0 & -1 \\
         0 & 0 & -1 & 0 \\
         0 & 1 & 0 & 0 \\
         1 & 0 & 0 & 0
        \end{pmatrix}.
           \end{align*}
Direct computation gives us 
\begin{align*}
J_1 e_1 =& x^2 \frac{\partial}{\partial x^2}  +  x^1 \frac{\partial}{\partial x^1} = \nu, \quad J_1 e_2 = - \frac{\partial}{\partial x^4},  \\
J_2 e_1 =& x^2 \frac{\partial}{\partial x^3} + x^1 \frac{\partial}{\partial x^4}, \quad \quad J_2 e_2 = \frac{\partial}{\partial x^1}, \\
J_3 e_1 =& x^2 \frac{\partial}{\partial x^4} -x^1 \frac{\partial}{\partial x^3}, \quad J_3e_2 = \frac{\partial}{\partial x^2}.
\end{align*}
Therefore we have
\begin{align*}
\left.J_1\right|_{\Sigma}= &0,  \\
\left.J_2\right|_{\Sigma} e_1 =& x^2 e_2 = x^2 \tilde{J}e_1, \quad \left.J_2\right|_{\Sigma} e_2= \frac{\partial}{\partial x^1} - \left< \frac{\partial}{\partial x^1}, \nu \right>\nu = -x^2 e_1 =x^2 \tilde{J} e_2, \\
\left.J_3\right|_{\Sigma} e_1 =&  -x^1 e_2 =-x^1 \tilde{J}e_1, \quad \left.J_3\right|_{\Sigma} e_2 = \frac{\partial}{\partial x^2} - \left< \frac{\partial}{\partial x^2}, \nu \right>\nu = x^1 e_1 = -x^1 \tilde{J}e_2.
\end{align*}
Thus the complex phase map $J$ can be represented by $(0, x^2, -x^1)$. Note that $ (x^1)^2 + (x^2)^2 = 1$, this implies the image of $J$ is a great circle. Clearly, even we add a point to $\mathbb{V}$, it will contain a great circle. Hence this example illustrates that the image restriction of the complex phase map in  Theorem \ref{thm:rigid} is optimal.
\end{eg}

\begin{cor}

Let $X: \Sigma^2 \to \mathbb{R}^4$ be a complete proper symplectic self-shrinking surface, then $\Sigma$ must be a  plane.

\end{cor}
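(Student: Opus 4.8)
The plan is to recognize that the symplectic hypothesis forces the image of the complex phase map $J$ into an \emph{open hemisphere} of $\mathbb{S}^2$, and then to invoke the opening step of the proof of \autoref{thm:rigid}, which already disposes of the hemisphere case. The only genuine work is thus the translation of ``symplectic'' into the language of $J$; everything afterwards is a citation.

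First I would fix conventions. After an $\mathrm{SO}(3)$ rotation of $(J_1,J_2,J_3)$ we may assume that $\Sigma$ is symplectic with respect to the K\"ahler form $\omega_1$ of $J_1$, i.e. $\omega_1\vert_\Sigma$ is a positive multiple of the area form. Choosing an oriented orthonormal frame $e_1,e_2$ of $T\Sigma$ with $\tilde Je_1=e_2$, \autoref{lem:hyper-Lagrangian} gives $(J_\alpha e_1)^{\top}=\lambda_\alpha\tilde Je_1=\lambda_\alpha e_2$, whence
\begin{align*}
\omega_\alpha(e_1,e_2)=\hin{J_\alpha e_1}{e_2}=\hin{(J_\alpha e_1)^{\top}}{e_2}=\lambda_\alpha,\quad\alpha=1,2,3.
\end{align*}
Therefore $\omega_\alpha\vert_\Sigma=\lambda_\alpha\,\dif A$, and the symplectic condition $\omega_1\vert_\Sigma>0$ is exactly $\lambda_1>0$ on all of $\Sigma$. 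Equivalently, the image of $J$ lies in the open hemisphere $\set{x_1>0}\subset\mathbb{S}^2$ centered at $b_0=(1,0,0)$.

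With this in hand I would quote verbatim the first paragraph of the proof of \autoref{thm:rigid}. There, setting $u=(1-\cos\rho)\circ J$ with $\rho$ the spherical distance from $b_0$, one uses \autoref{thm:self-shrinker} (so that $\tau(J)+\dif J(-X^{\top}/2)=0$) to obtain $\Delta_{-\frac{X^{\top}}{2}}u=(\cos\rho)\abs{\dif J}^2\geq0$, the inequality being genuine because $\rho<\pi/2$ on an open hemisphere. The weighted cut-off estimate, together with properness of $\Sigma$ ensuring $\int_\Sigma e^{-\abs{X}^2/4}<\infty$, forces $\nabla u\equiv0$, hence $J$ is constant; then $X^{\bot}=0$, $\mathbf{B}(X,\cdot)=0$, and $\Sigma$ is minimal and totally geodesic, so a plane by completeness. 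There is no real obstacle here, as the corollary is a strict specialization of \autoref{thm:rigid}; the one point deserving care is that an open hemisphere is \emph{not} contained in $\mathbb{V}=\mathbb{S}^2\setminus\overline{\mathbb{S}}^1_{+}$, so one cannot cite \autoref{thm:rigid} through the inclusion $J(\Sigma)\subset\mathbb{V}$ and must instead appeal directly to its hemisphere sub-case, which is precisely what the identity $\omega_\alpha\vert_\Sigma=\lambda_\alpha\,\dif A$ secures.
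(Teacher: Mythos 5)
Your proof is correct and follows essentially the same route as the paper: the paper's entire proof consists of the observation that the symplectic condition places the image of the complex phase map in an open hemisphere (which is exactly what your identity $\omega_\alpha\vert_\Sigma=\lambda_\alpha\,\dif A$, obtained from \autoref{lem:hyper-Lagrangian}, makes precise), followed by an appeal to \autoref{thm:rigid}, whose proof disposes of the hemisphere case in its first step. One correction to your closing caveat: after your normalization the hemisphere is $\set{x_1>0}$, which \emph{is} contained in $\mathbb{V}=\mathbb{S}^2\setminus\overline{\mathbb{S}}^1_{+}$, since $\overline{\mathbb{S}}^1_{+}$ lies entirely in the plane $\set{x_1=0}$; so \autoref{thm:rigid} could be cited directly through the inclusion $J(\Sigma)\subset\mathbb{V}$, although invoking its hemisphere sub-case, as you do, is equally valid.
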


\begin{proof}
The symplectic condition implies that the image of the complex phase map is contained in an open hemisphere.
\end{proof}

\begin{rem}
Arezzo-Sun \cite{AreSun13} proved that  complete proper symplectic self-shrinker surfaces in $\mathbb{R}^4$ must be  a plane under different conditions on the second fundamental form, flat normal bundle or bounded geometry (see \cite[Main Theorems 3, 4, 5]{AreSun13}).
\end{rem}

For the rigidity of translating soliton, we have the following

\begin{theorem}\label{thm:soliton}

Let $X: \Sigma^2 \to \mathbb{R}^4$ be a complete translating soliton  surface with flat normal bundle. Assume the image of the complex phase map is contained in a regular ball in $\mathbb{S}^2$, i.e., a geodesic ball $B_R(q)$ disjoint from the cut locus of $q$ and $R < \frac{\pi}{2}$, then $\Sigma$ has to be a plane.

\end{theorem}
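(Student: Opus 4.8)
The plan is to run the scheme that proves \autoref{thm:rigid}, adapted to the translating structure, and to isolate the one place where the translating case genuinely differs.

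\emph{Step 1 (the soliton equation for $J$).} First I would show that the complex phase map of a translating soliton is a generalized harmonic map, mirroring \autoref{thm:self-shrinker}. Write $\mathbf{H}=V^{\bot}$ for the fixed translating direction $V$. Since $V$ is parallel in $\mathbb{R}^4$, for every tangent $W$ one has $\bar\nabla_W V=0$, and taking the normal part of $\bar\nabla_W V^{\top}+\bar\nabla_W V^{\bot}=0$ gives $\nabla^{\bot}_W\mathbf{H}=-\mathbf{B}(W,V^{\top})$. Feeding this into \eqref{eq:divJH} and \eqref{eq:3-1} exactly as in \autoref{thm:self-shrinker}, and then using \eqref{eq:1-1}, I expect
\[
\tau(J)=-\dif J\left(V^{\top}\right),
\]
so that $J$ is a $V^{\top}$-harmonic (generalized harmonic, cf. \cite{CheJosWan15}) map; note this specializes correctly to the self-shrinker case upon replacing $V$ by $-\tfrac12 X$.

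\emph{Step 2 (a bounded drift-subharmonic function from convexity).} Let $\rho=d_{\mathbb{S}^2}(\cdot,q)$, put $\psi=1-\cos\rho$, and set $u=\psi\circ J$. Since $R<\tfrac{\pi}{2}$, on $B_R(q)$ we have $\mathrm{Hess}\,\psi=\cos\rho\,g_{\mathbb{S}^2}\ge\cos R\,g_{\mathbb{S}^2}>0$, so the composition formula together with Step 1 yields
\[
\Delta_{V^{\top}}u=\cos(\rho\circ J)\,\abs{\dif J}^2\ge\cos R\,\abs{\dif J}^2\ge0,\qquad 0\le u\le 1-\cos R<1 .
\]
Thus $u$ is a \emph{bounded} subsolution of the drift Laplacian $\Delta_{V^{\top}}=\Delta+\hin{V^{\top}}{\nabla\cdot}=e^{-\hin{V}{X}}\Div(e^{\hin{V}{X}}\nabla\cdot)$, and $\dif J\equiv0$ as soon as $u$ is shown to be constant.

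\emph{Step 3 (passing to $\dif J\equiv0$; the main obstacle).} Here the translating case departs from \autoref{thm:rigid}. The weight attached to $\Delta_{V^{\top}}$ is $e^{\hin{V}{X}}$, which grows along $V$ and is \emph{not} integrable over $\Sigma$; consequently the cutoff/Caccioppoli estimate used for self-shrinkers (which relied on $\int_\Sigma e^{-\abs{X}^2/4}<\infty$ and on absorbing the drift term) cannot be reproduced, and the undecaying first-order term $\hin{V^{\top}}{\nabla u}$ in fact obstructs any naive energy argument. Instead I would prove a Liouville statement for $V^{\top}$-harmonic maps into a regular ball in the spirit of \cite{JosXinYan12,CheJosWan15}: on a complete domain for which the weak (Omori--Yau) maximum principle for $\Delta_{V^{\top}}$ holds, a $V^{\top}$-harmonic map with image in $B_R(q)$, $R<\tfrac{\pi}{2}$, must be constant. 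The flat normal bundle is precisely the hypothesis I would use to make this rigorous: with $\kappa^{\bot}=0$ the computations of Section~4 give $\det(\dif J)=\kappa$ and $\abs{\dif J}^2=\abs{\mathbf{H}}^2-2\kappa$, which I would combine with a Bochner/Simons identity for the drift-tension to control the relevant Bakry--\'Emery curvature, verify the maximum principle, and absorb the positive target curvature of $\mathbb{S}^2$ against the convexity gain $\cos R$. Verifying this maximum principle (equivalently, the Liouville property) is the hard part of the argument.

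\emph{Step 4 (from $\dif J\equiv0$ to a plane).} Once $J$ is constant, $\partial J=0$, so \autoref{lem:LW} gives $\mathbf{H}=0$ and $\Sigma$ is minimal. Then $V^{\bot}=\mathbf{H}=0$, i.e. the constant vector $V$ is everywhere tangent to $\Sigma$; since $\bar\nabla_V V=0$, the straight lines $t\mapsto p+tV$ are geodesics of $\Sigma$ that are lines in $\mathbb{R}^4$, and by completeness they lie in $\Sigma$. Hence $\Sigma$ is a cylinder $\Gamma\times\mathbb{R}V$, whose mean curvature is the curvature vector of the profile $\Gamma$; minimality forces $\Gamma$ to be a straight line, so $\Sigma$ is a plane.
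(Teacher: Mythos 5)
Your Steps 1, 2 and 4 are correct and agree in substance with the paper: the paper derives $\tau(J)=\dif J(V_0^{\top})$ from the translator equation $\mathbf{H}=-V_0^{\bot}$ (the same identity as yours up to sign conventions), and where you finish with the cylinder argument the paper instead invokes \cite[Proposition 3.2]{HanLi09} to pass from $\mathbf{H}\equiv0$ to $\mathbf{B}\equiv0$; either closing works. The genuine gap is Step 3, which you yourself concede is ``the hard part'' and leave unproved: establishing $\dif J\equiv0$ \emph{is} the theorem, and a plan to ``verify the Omori--Yau maximum principle for $\Delta_{V^{\top}}$'' is not a proof. Worse, that plan is likely to fail under the stated hypotheses. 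The weak maximum principle for a drift Laplacian is normally obtained from a lower bound on the Bakry--\'Emery Ricci tensor, and on a translator the paper computes $\mathrm{Ric}_{-V}(e_i)=-\sum_{\alpha,j,k}h^{\alpha}_{ik}h^{\alpha}_{jk}e_j$, which is bounded below only if $\abs{\mathbf{B}}$ is bounded --- an assumption the theorem deliberately avoids (see the paper's remark contrasting this result with \cite{HanSun10}, where bounded second fundamental form is assumed). Your guess at how flatness of the normal bundle enters, via $\kappa^{\bot}=0$, $\det(\dif J)=\kappa$ and $\abs{\dif J}^2=\abs{\mathbf{H}}^2-2\kappa$, is also not the right mechanism: those identities do not control the Ricci contraction appearing in the Bochner formula, which is of size $\abs{\mathbf{B}}^2\abs{\dif J}^2$.

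What the paper actually does at this point is the following, and both ingredients are absent from your proposal. First, flatness of the normal bundle is used through the Ricci equation to diagonalize the two shape operators simultaneously, $\mathbf{A}^{\alpha}(e_i)=\Lambda^{\alpha}_ie_i$, which yields the pointwise inequality
\begin{align*}
\sum_{i,j}\hin{\dif J(e_i)}{\dif J(e_j)}^2-\sum_{i,j,\alpha,k}h^{\alpha}_{ik}h^{\alpha}_{jk}\hin{\dif J(e_i)}{\dif J(e_j)}
=2\left(\Lambda^1_{2}\Lambda^2_1-\Lambda^1_{1}\Lambda^2_2\right)\hin{\dif J(e_1)}{\dif J(e_2)}\geq0,
\end{align*}
so that in the Bochner formula of \cite{CheJosQiu12} for the $-V$-harmonic map $J$ the unbounded Ricci term is absorbed by the target curvature term, giving $\tfrac12\Delta_{-V}\abs{\dif J}^2\geq\abs{\nabla\dif J}^2-\abs{\dif J}^4$ with no appearance of $\abs{\mathbf{B}}$. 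Second, instead of any integral or Omori--Yau argument, the paper runs a Cheng--Yau type \emph{local gradient estimate} in the style of \cite[Theorem 2]{CheJosQiu12}: choosing $b$ with $\psi(R)<b<1$ (this is exactly where the regular-ball hypothesis $R<\pi/2$ enters), one applies the maximum principle to $f=(a^2-r^2)^2\abs{\dif J}^2/(b-\psi\circ J)^2$ on $\Sigma\cap B_a(o)$ with $r=\abs{X}$ the \emph{extrinsic} distance; the only geometric inputs are $\abs{\nabla r}\leq1$ and $\Delta r^2=4+2\hin{\mathbf{H}}{X}\leq4+2r$, which follow from the soliton equation alone and require no curvature bounds precisely because the distance is extrinsic. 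This produces $\abs{\dif J}^2\leq C_5/a^2$ on $\Sigma\cap B_{a/2}(o)$, and letting $a\to\infty$ forces $\dif J\equiv0$. Until you supply both the simultaneous-diagonalization Bochner inequality and a localized Liouville argument of this kind (or some other argument valid without bounded $\abs{\mathbf{B}}$), your proposal does not prove the theorem.
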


\begin{proof}

Since we can view $\Sigma$ as a hyper-Lagrangian submanifold in $\mathbb{R}^4$ with respect to some almost complex structure $J$,   Let $\set{ e_1, e_2=\tilde{J}e_1 }$ be a local orthonormal frame field on $\Sigma$  such that $\nabla e_i =0$ at the considered point. Denote $\nu_1=\tilde{J}_2 e_1, \nu_2 =\tilde J^{\bot}\nu_1= \tilde{J_1}\tilde J_2e_2 =\tilde{J}_3 e_1$, then $\set{\nu_1, \nu_2}$  is a local orthonormal frame  normal field along $\Sigma$.  Recall the translating soliton equation $\mathbf{H}=-V_{0}^{\bot}$, here $V_0$ is a fixed unit vector. Denote $V\coloneqq V_{0}^{\top}$,  we obtain
\begin{equation*}\aligned
L_{-V} e_i =& \overline{\nabla}_{-V}e_i - \overline{\nabla}_{e_i}(-V) = -\hin{V}{e_j} \overline{\nabla}_{e_j}e_i +\overline{\nabla}_{e_i}\left(\hin{V_0}{ e_j }e_j\right)   \\
=& -\hin{V_0}{ e_j}\mathbf{B}\left(e_j, e_i\right)  + \hin{V_0}{ \mathbf{B}\left(e_j, e_i\right)} e_j + \hin{V_0}{e_j}\mathbf{B}\left(e_i, e_j\right)\\
=&\hin{-\mathbf{H}}{\mathbf{B}\left(e_i,  e_j\right)}e_j =  - H^\alpha h^{\alpha}_{ij}e_j.
\endaligned
\end{equation*}
It follows that
 \begin{equation*}\aligned
 -\dfrac{1}{2}\left(L_{-V} g\right)(e_i) =& -\dfrac{1}{2}\left(L_{-V} g\right)\left(e_i, e_j\right)e_j = \dfrac{1}{2}g\left(L_{-V}e_i, e_j\right)e_j + \dfrac{1}{2}g\left(L_{-V}e_j, e_i\right)e_j
  =  - H^{\alpha}h^{\alpha}_{ij}e_j.
  \endaligned
 \end{equation*}
 The Gauss equation and the above equality imply that
\begin{equation*}\aligned
{\rm Ric}_{-V} (e_i)= {\rm Ric}(e_i) -  \frac{1}{2}(L_{-V} g)(e_i)
=(H^\alpha h^{\alpha}_{ij}- h^{\alpha}_{ik}h^{\alpha}_{jk})e_j  - H^\alpha h^{\alpha}_{ij}e_j = -  \sum_{\alpha,j, k}h^{\alpha}_{ik}h^{\alpha}_{jk}e_j.
\endaligned
 \end{equation*}
 From \eqref{eq:2}, we obtain
 \begin{align*}
     \hin{\dif J\left(e_i\right)}{\dif J\left(e_j\right)}=&\hin{\mathbf{B}\left(e_i,\tilde Je_1\right)-\tilde J^{\bot}\mathbf{B}\left(e_i,e_1\right)}{\mathbf{B}\left(e_j,\tilde Je_1\right)-\tilde J^{\bot}\mathbf{B}\left(e_j,e_1\right)}\\
     =&\hin{\mathbf{B}\left(e_i,e_2\right)-\tilde J^{\bot}\mathbf{B}\left(e_i,e_1\right)}{\nu_1}\hin{\mathbf{B}\left(e_j,e_2\right)-\tilde J^{\bot}\mathbf{B}\left(e_j,e_1\right)}{\nu_1}\\
     &+\hin{\mathbf{B}\left(e_i,e_2\right)-\tilde J^{\bot}\mathbf{B}\left(e_i,e_1\right)}{\nu_2}\hin{\mathbf{B}\left(e_j,e_2\right)-\tilde J^{\bot}\mathbf{B}\left(e_j,e_1\right)}{\nu_2}\\
     =&\left(h^1_{i2}+h^2_{i1}\right)\left(h^1_{j2}+h^2_{j1}\right)+\left(h^2_{i2}-h^1_{i1}\right)\left(h^2_{j2}-h^1_{j1}\right)\\
     =&\sum_{\alpha=1}^2\sum_{k=1}^2h^{\alpha}_{ik}h^{\alpha}_{jk}+h^{1}_{i2}h^{2}_{j1}+h^2_{i1}h^{1}_{j2}-h^{2}_{i2}h^1_{j1}-h^{1}_{i1}h^2_{j2}.
 \end{align*}
 We conclude that
 \begin{align*}
    &\hin{\dif J\left(e_i\right)}{\dif J\left(e_j\right)}\left(\hin{\dif J\left(e_i\right)}{\dif J\left(e_j\right)}-\sum_{\alpha=1}^2\sum_{k=1}^2h^{\alpha}_{ik}h^{\alpha}_{jk}\right) \\
    =&\hin{\dif J\left(e_i\right)}{\dif J\left(e_j\right)}\left(h^{1}_{i2}h^{2}_{j1}+h^2_{i1}h^{1}_{j2}-h^{2}_{i2}h^1_{j1}-h^{1}_{i1}h^2_{j2}\right)\\
    =&2\hin{\dif J\left(h^1_{2i}e_i\right)}{\dif J\left(h^2_{1j}e_j\right)}-\hin{\dif J\left(h^2_{2i}e_i\right)}{\dif J\left(h^{1}_{1j}e_j\right)}\\
    =&2\hin{\dif J\left(\mathbf{A}^1\left(e_2\right)\right)}{\dif J\left(\mathbf{A}^2\left(e_1\right)\right)}-2\hin{\dif J\left(\mathbf{A}^1\left(e_1\right)\right)}{\dif J\left(\mathbf{A}^2\left(e_2\right)\right)}.
 \end{align*}

Since the normal bundle is flat, by the Ricci equation,  the coefficients of the second fundamental form $h^\alpha_{ij}$ satisfy 
\begin{align*}
     \sum_{i=1}^2\left(h^\alpha_{ij}h^\beta_{ik} - h^\beta_{ij}h^\alpha_{ik}\right) = 0,
\end{align*}
 which means that two $(2\times 2)$ matrices 
 \[
 (h^1_{ij}), (h^2_{ij})
 \]
 can be diagonalized simultaneously at a fixed point. 
 
 Therefore for any $p\in \Sigma$, we can choose a local frame field $\{e_1, e_2\}$ around $p$ such that $h^\alpha_{ij} = \Lambda^\alpha_{i}\delta_{ij}$ at $p$, i.e., 
 \begin{align*}
     \mathbf{A}^{\alpha}\left(e_i\right)=\Lambda^{\alpha}e_i,\quad i,\alpha=1,2.
 \end{align*}
 Hence at the $p$,
 \begin{align*}
     \hin{\dif J\left(e_1\right)}{\dif J\left(e_2\right)}=&\Lambda^1_{2}\Lambda^2_1-\Lambda^1_{1}\Lambda^2_2.
 \end{align*}
 Thus,
 \begin{align*}
     &\hin{\dif J\left(e_i\right)}{\dif J\left(e_j\right)}\left(\hin{\dif J\left(e_i\right)}{\dif J\left(e_j\right)}-\sum_{\alpha=1}^2\sum_{k=1}^2h^{\alpha}_{ik}h^{\alpha}_{jk}\right) \\
     =&2\left(\Lambda^1_{2}\Lambda^2_1-\Lambda^1_{1}\Lambda^2_2\right)\hin{\dif J\left(e_1\right)}{\dif J\left(e_2\right)}\\
     \geq&0.
 \end{align*}
 
It is easy to see that  the curvature tensor of $\mathbb{S}^2$ satisfies
\[
\sum_{i, j}R^{\mathbb{S}^2}(dJ(e_i), dJ(e_j), dJ(e_i), dJ(e_j)) = \abs{dJ}^4 - \sum_{i, j}\hin{\dif J\left(e_i\right)}{\dif J\left(e_j\right)}^2.
\]
From the translator equation, we get
\begin{equation*}\aligned
&\hin{\nabla^{\bot}_{\tilde{J}e_j}\mathbf{H}}{ J_\alpha e_j} = - \hin{ \nabla^{\bot}_{\tilde{J}e_j} V_{0}^{\bot}}{J_\alpha e_j}
= -\hin{\overline{\nabla}_{\tilde{J}e_j} V_0^{\bot}}{J_\alpha e_j}-  \hin{ \mathbf{A}^{V_0^{\bot}}(\tilde{J}e_j)}{ J_\alpha e_j } \\
=& - \hin{\overline{\nabla}_{\tilde{J}e_j} V_0}{J_\alpha e_j} +  \hin{ \overline{\nabla}_{\tilde{J}e_j} V_0^{\top}}{ J_\alpha e_j} -  \lambda_\alpha \hin{\mathbf{H}}{ V_0^{\bot}}
=  \hin{\overline{\nabla}_{\tilde{J}e_j} V_0^{\top}}{ J_\alpha e_j} -  \lambda_\alpha \hin{\mathbf{H}}{ V_0}.
\endaligned
\end{equation*}
Since 
\begin{equation*}\aligned
 \hin{\overline{\nabla}_{\tilde{J}e_j} V_0^{\top}}{ J_\alpha e_j} = &  \hin{V_0}{ \mathbf{B}(\tilde{J}e_j, e_k)} \hin{ e_k}{ J_\alpha e_j}+ \hin{V_0}{e_k}\hin{\mathbf{ B}(\tilde{J}e_j, e_k)}{ J_\alpha e_j}  \\
=&   \lambda_\alpha \hin{V_0}{\mathbf{H}} + \hin{\mathbf{A}^{(J_{\alpha}e_j)^\bot}(\tilde{J}e_j)}{V_{0}^{\top}}.
\endaligned
\end{equation*}
Therefore
\begin{align*}
    (\tau(J))^\alpha = \hin{\nabla^{\bot}_{\tilde{J}e_j}\mathbf{H}}{ J_\alpha e_j} = \hin{\mathbf{A}^{(J_{\alpha}e_j)^\bot}(\tilde{J}e_j)}{V_{0}^{\top}}= \hin{\nabla \lambda_\alpha}{ V_0^{\top}}= \hin{\nabla \lambda_\alpha}{ V}.
\end{align*}
Namely, 
\[
\tau(J) + dJ(-V) =0.
\]
Thus $J$ is a $-V$-harmonic map, then the Bochner formula (see  \cite[Lemma 1]{CheJosQiu12}) gives us
\begin{align*}
\frac{1}{2}\Delta_{-V} |dJ|^2 =& \abs{\nabla dJ}^2 + \sum_i \hin{ dJ({\rm Ric}_{-V}(e_i))}{dJ(e_i)} - \sum_{i, j}R^{\mathbb{S}^2}(dJ(e_i), dJ(e_j), dJ(e_i), dJ(e_j)) \\
= &\abs{\nabla dJ}^2 - \sum_{i, j, \alpha, k} h^\alpha_{ik} h^\alpha_{jk} \hin{dJ(e_i)}{ dJ(e_j) } -(|dJ|^4 - \sum_{i, j}\hin{ dJ(e_i)}{dJ(e_j)}^2 \\
\geq &\abs{\nabla dJ}^2-\abs{dJ}^4.
 \end{align*}
 Let $\rho $ be the distance function on $\mathbb{S}^2$, and $h$ the Riemannian metric of $\mathbb{S}^2$. Define $\psi=1-\cos\rho$,  then ${\rm Hess}(\psi) =( \cos\rho)h$. 

Since for any $X=(x_1, ..., x_{4}) \in \mathbb{R}^{4}$, let $r=|X|$, then we have 
\begin{align*}
\nabla r^2 =&  2X^{\top},  \quad |\nabla r| \leq 1 \\
\Delta r^2 = & 4 + 2\hin{\mathbf{H}}{ X } \leq 4+2r.
\end{align*}

Since $J(\Sigma) \subset B_R(q)\subset \mathbb{S}^2$, note that $R<\frac{\pi}{2}$, so we can choose a constant $b$, such that $\psi(R)< b < 1.$ Let $B_a(o)$ be the ball centered at $o$ with radius $a $ in $\mathbb{R}^{4}$. Define $f: \Sigma\cap B_a(o) \To \mathbb{R}$ by
\[
f=\frac{(a^2-r^2)^2\abs{dJ}^2}{(b-\psi \circ J)^2}.
\]
Then by a similar proof of  \cite[Theorem 2]{CheJosQiu12}, we conclude that
\begin{equation*}
\abs{dJ}^2 \leq \max\left\{  \frac{64 r^2}{C_{4}^2(a^2-r^2)^2(b-\psi \circ J)^2}, \quad \frac{32 r^2}{C_4(a^2-r^2)^2} +\frac{8(r+2)}{C_4(a^2-r^2)} \right\},
\end{equation*}
where $C_4$ is a positive constant.
From this we can obtain the upper bound of $f$. Hence at every point of $\Sigma\cap B_{\frac{a}{2}}(o)$, we have
\begin{equation}\label{2.5}
\abs{dJ}^2\leq \frac{C_5}{a^2}.
\end{equation}
Here $C_5$ is a positive constant depending only on $R$.  For any fixed $x$ and letting $a \rightarrow \infty$ in (\ref{2.5}), we then derive that $dJ=0$, namely, $J$ must be constant. It follows that $\mathbf{H} \equiv 0$. Then by Proposition 3.2 in \cite{HanLi09}, $\mathbf{B} \equiv 0$. Hence $\Sigma $ is a plane.
\end{proof}

\begin{rem}
\begin{itemize}
    \item[(1)] Let $\alpha$ be the K\"ahler angle of the translator, \autoref{thm:soliton} implies that  the complete symplectic translating soliton surface with flat  normal bundle and $\cos\alpha$ has a positvie lower bound has to be a plane. Han-Sun  \cite{HanSun10} showed that if $\cos\alpha$ has a positive lower bound, then complete symplectic translating soliton surfaces with bounded second fundamental form and nonpositive normal curvature must be a plane, which indicated that when the normal bundle is flat, such translator is a plane (see \cite[Main Theorem 1]{HanSun10}). In this case, we could remove the condition on the boundedness of the second fundamental form. 
    \item[(2)] The restriction on the image of the complex phase map in \autoref{thm:soliton} is necessary. For example, the ``grim reaper" $(x, y, -\ln\cos x,0), \abs{x}<\pi/2, y\in\mathbb{R}$ is a translating soliton to the symplectic MCF which translates in the direction of the constant vector $(0,0,1,0)$, and $J=(\cos x,0,-\sin x), \abs{x}<\pi/2$ can not contained in any regular ball of $\mathbb{S}^2$. One can check that $\abs{\mathbf{B}}^2=\abs{\mathbf{H}}^2=\abs{\dif J}^2=\cos^2x$. In particular, both the tangent bundle and the normal bundle are flat. 
\end{itemize}

\end{rem}

Now we are at a position to give a proof of \autoref{thm:sing}.

\begin{proof}[Proof of \autoref{thm:sing}]

Firstly for the compact subset $K_1\coloneqq J\left(\Sigma_0\right)\subset \mathbb{V}$, there is a positive and strictly convex smooth function $\rho$ on  $K_1$ (cf. \cite{JosXinYan12}). Choose a domain $U\Subset \mathbb{V}$ such that $K_1\subset U$ and $\rho$ is a strictly convex on $\bar U$. Put $c\coloneqq\max_{K_1}\rho$ and consider the function $u\coloneqq \rho\circ J$. Then $u$ is well defined in $\Sigma\times[0,t_0]$ for small $t_0>0$. 
According to \autoref{thm: harmonic-heat}, along the mean curvature flow,  the complex map satisfies
\begin{align*}
\dfrac{\partial J}{\partial t}=\tau\left(J\right).
\end{align*}
Thus,
\begin{align*}
\dfrac{\partial u}{\partial t}-\Delta u\leq0.
\end{align*}
As a consequence, $u\leq c$ in $\Sigma\times[0,t_0]$.

Let $U_{\varepsilon}$ be a $\varepsilon$-neighborhood of $U$. We claim that 
\begin{claim}
There is a $\varepsilon_0>0$ depending only on $U$ such that for $0<\varepsilon<\varepsilon_0$, we have
\begin{align*}
\set{\mathbf{x}\in \bar U:\rho\left(\mathbf{x}\right)\leq c}=\set{\mathbf{x}\in \overline{ U_{\varepsilon}}:\rho\left(\mathbf{x}\right)\leq c}.
\end{align*}
\end{claim}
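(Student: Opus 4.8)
The plan is to reduce the set equality to a pointwise sign statement on the thin shell $\overline{U_\varepsilon}\setminus U$, and then to a uniform gap estimate for $\rho$ along $\partial U$. Since $\bar U\subseteq\overline{U_\varepsilon}$, the inclusion $\set{\mathbf x\in\bar U:\rho(\mathbf x)\le c}\subseteq\set{\mathbf x\in\overline{U_\varepsilon}:\rho(\mathbf x)\le c}$ is automatic, so only the reverse inclusion requires proof. The reverse inclusion is equivalent to the assertion that enlarging the domain creates no new admissible point, i.e. that $\rho>c$ everywhere on the closed shell $\overline{U_\varepsilon}\setminus U$. Thus the entire difficulty is concentrated at the boundary: I must show that $\rho$ stays strictly above the threshold $c$ on $\partial U$ and, in a quantitative form, on a uniform collar around it.

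First I would pin down the key boundary gap, and this is the step where the freedom in choosing $U$ must be used. Since $K_1$ is compact with $c=\max_{K_1}\rho$ and $K_1\Subset U$, one may fix $c_1$ with $c<c_1$ and $\set{\rho\le c_1}\Subset\mathbb V$, and make the choice of $U$ explicit by taking it to be the open sublevel set $\set{\rho<c_1}$. This still contains $K_1$, because $K_1\subseteq\set{\rho\le c}\subseteq\set{\rho<c_1}$, and $\rho$ remains strictly convex on $\bar U$. With this normalization $\partial U\subseteq\set{\rho=c_1}$, so that
\[
m:=\min_{\partial U}\rho=c_1>c,
\]
strict convexity guaranteeing that the sublevel set is a genuine domain with smooth compact boundary.

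Next I would upgrade the pointwise gap $m>c$ to a uniform statement on a collar. As $\bar U\Subset\mathbb V$ its closure is compact and $\rho$ extends smoothly to a fixed neighborhood of $\bar U$, hence is uniformly continuous there with some modulus $\omega$. Choose $\varepsilon_0$ so small that $\overline{U_{\varepsilon_0}}$ lies in this neighborhood and $\omega(\varepsilon_0)<m-c=c_1-c$. For any $0<\varepsilon<\varepsilon_0$ and any $\mathbf x\in\overline{U_\varepsilon}\setminus U$ there is a point $\mathbf p\in\partial U$ with $\mathrm{dist}(\mathbf x,\mathbf p)\le\varepsilon$, whence
\[
\rho(\mathbf x)\ge\rho(\mathbf p)-\omega(\varepsilon)\ge m-\omega(\varepsilon)>c.
\]
Therefore $\rho>c$ on the whole shell, the reverse inclusion follows, and the resulting $\varepsilon_0$ depends only on $U$ through the gap $c_1-c$ and the modulus $\omega$, exactly as claimed.

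The main obstacle I anticipate is precisely the boundary gap $\min_{\partial U}\rho>c$: it fails for an arbitrary domain containing $K_1$ (any boundary point with $\rho\le c$ would immediately break the claimed equality), so one genuinely needs to exploit that $U$ may be taken as a sublevel set of the strictly convex function $\rho$. Once that normalization is in place the rest is a routine compactness-plus-uniform-continuity argument; the only care required is to ensure $\rho$ is defined and strictly convex on a neighborhood large enough to contain $\overline{U_{\varepsilon_0}}$, which is harmless since $\bar U\Subset\mathbb V$.
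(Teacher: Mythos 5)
Your reduction is right---the whole content of the Claim is that $\rho>c$ on $\overline{U_{\varepsilon}}\setminus\bar U$---and your closing uniform-continuity argument is fine; you are also right that the statement fails for an arbitrary admissible $U$. But the step on which your entire proof rests, ``one may fix $c_1$ with $c<c_1$ and $\set{\rho\le c_1}\Subset\mathbb{V}$, and take $U=\set{\rho<c_1}$,'' is a genuine gap: nothing stated in the paper produces such a $c_1$. What is invoked from \cite{JosXinYan12} is that $\mathbb{V}$ is \emph{convex supporting}, i.e.\ it supplies a function $\rho$ that is positive, smooth and strictly convex on a neighborhood of the given compact set $K_1$; it does not supply a globally defined, globally strictly convex function whose sublevel sets are compactly contained in $\mathbb{V}$. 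With only the stated properties, your $c_1$ can fail to exist. For example, let $p^*\in\mathbb{V}$ lie at distance $0.3$ from $\overline{\mathbb{S}}^{1}_{+}$, set $\rho(\mathbf{x})=1-\hin{\mathbf{x}}{p^*}$ (smooth, positive away from $p^*$, strictly convex exactly on the open hemisphere centered at $p^*$), and let $K_1=\set{q}$ with $q$ at distance $0.4$ from $p^*$ along the geodesic pointing away from $\overline{\mathbb{S}}^{1}_{+}$. Then $\rho$ is positive and strictly convex on $\bar U$ for every small domain $U$ containing $q$, yet for every $c_1>c=\rho(q)$ the set $\set{\rho\le c_1}$ is a spherical cap about $p^*$ of radius $\arccos(1-c_1)>0.4>0.3$, so it meets $\overline{\mathbb{S}}^{1}_{+}$ and is never $\Subset\mathbb{V}$. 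In fact, for this $\rho$ no admissible $U$ at all can satisfy the Claim: by connectedness, the Claim would force the cap $\set{\rho\le c}$, which contains $q$ and meets $\overline{\mathbb{S}}^{1}_{+}$, to lie inside $\bar U\subset\mathbb{V}$. So the existence of a ``barrier'' sublevel set of $\rho$ enclosing $K_1$ inside the region of strict convexity---precisely what you assume---is the whole difficulty, and it is a property of the particular construction in \cite{JosXinYan12} that must be extracted and verified, not a consequence of $\rho$ being some positive strictly convex function near $K_1$.

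For comparison, the paper's proof keeps the original $U$, shrinks $\varepsilon_0$ so that $\rho$ remains strictly convex on $\overline{U_{\varepsilon_0}}\subset\mathbb{V}$, and argues by contradiction along geodesics: assuming $\rho(\mathbf{y})\le c$ for some $\mathbf{y}\in\overline{U_{\varepsilon}}\setminus\bar U$, it selects $\mathbf{x}\in\partial U$ with $\rho(\mathbf{x})=c$ and uses strict convexity of $f=\rho\circ\gamma$ along the shortest geodesic $\gamma$ from $\mathbf{x}$ to $\mathbf{y}$, plus the assertion $f'(0)>0$, to conclude. That mechanism is genuinely different from your continuity estimate, but it too presupposes that $U$ is positioned correctly relative to the level sets of $\rho$: in the example above a bad $\mathbf{y}$ really exists, and every admissible $\mathbf{x}$ has $f'(0)\le f(1)-f(0)\le 0$ by convexity, so the unproved inequality $f'(0)>0$ is exactly where that argument would break. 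In short, the difficulty you isolated is real and is glossed over in the paper as well; your instinct to re-choose $U$ as a sublevel set is the right repair, but as written your proposal trades the missing argument for an unproved global assumption on $\rho$. A complete proof along your lines would have to go back to the explicit function constructed in \cite{JosXinYan12} and check that it can be chosen so that some sublevel set above the level $c$ is compactly contained in its region of strict convexity and in $\mathbb{V}$.
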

Indeed, for every $\mathbf{x}\in\partial U$, define $r\left(\mathbf{x}\right)<\pi/2$ to be the largest number of $r$ such that $\mathrm{B}^{\mathbb{S}^2}_{r}\left(\mathbf{x}\right)\subset\mathbb{V}$. Since $\rho$ is strictly convex on $\bar U$, we can take $0<\varepsilon_0\leq\min_{\partial U}\set{r}$ such that $\rho$ is strictly convex on $\overline{ U_{\varepsilon_0}}$.  If for some $\left(\mathbf{y}\right)\in \overline{ U_{\varepsilon}}\setminus\bar U$, we also have $\rho\left(\mathbf{y}\right)\leq c$. Applying the  maximum principle, we can choose $\mathbf{x}\in\partial U$ with $\rho\left(\mathbf{x}\right)=c$. Let $\gamma:[0,1]\To U_{\epsilon_0}$ be the shortest geodesic from $\mathbf{x}$ to $\mathbf{y}$.  Since $\rho$ is strictly convex, we know that $f\coloneqq\rho\circ\gamma$ is also a strictly convex function on $[0,1]$. Moreover $f'(0)>0$ which is impossible by the maximum principle. Thus the Claim holds.

 Let $\tau\in(0,T]$ be the maximum time such that 
\begin{align*}
    J\left(\Sigma_{t}\right)\subset\set{\mathbf{x}\in \bar U:\rho\left(\mathbf{x}\right)\leq c},\quad\forall 0\leq t<\tau.
\end{align*}
If $\tau<T$, then $J\left(\Sigma_{\tau}\right)\subset\set{\mathbf{x}\in \bar U:\rho\left(\mathbf{x}\right)\leq c}$. Applying the maximum principle  and the above claim, we can extend $\tau$ to some $\tau'>\tau$ which is a contradiction. Thus we obtain that 
\begin{align*}
J\left(\Sigma_{t}\right)\subset\set{\mathbf{x}\in \bar U:\rho\left(\mathbf{x}\right)\leq c},\quad\forall 0\leq t<T.
\end{align*}
Denote $K\coloneqq \set{\mathbf{x}\in \bar U:\rho\left(\mathbf{x}\right)\leq c}$. Clearly, $K_1\subset K$ and $K$ is compact.

Suppose that the mean curvature flow has a Type  \Rmn{1} singularity at $T$. 
Assume 
\begin{align*}
\varepsilon_k=\abs{\mathbf{B}}\left(x_k,t_k\right)=\max_{t\leq t_k}\abs{\mathbf{B}},
\end{align*}
and $x_k\to p\in\Sigma, t_k\to T, F(x_k,t_k)\to q\in M $ as $k\to\infty$.   Set 
\begin{align*}
F_k\left(x,t\right)\coloneqq\varepsilon_k\left(F\left(x,\varepsilon_k^{-2}t+t_k\right)-q\right).
\end{align*}
Denote by $\Sigma_t^{k}\coloneqq F_k\left(\cdot,t\right)\left(\Sigma\right)$, then
\[
g^k_{ij}= \varepsilon^2_{k} g_{ij}, \quad (g^k)^{ij} = \varepsilon^{-2}_k g^{ij}.
\]
Direct computation gives us 
\begin{align*}
\dfrac{\partial F_k}{ \partial t} = \varepsilon^{-1}_k \dfrac{\partial F}{ \partial t} = \mathbf{H}_k , \quad \Delta_{g^k} F_k =  \varepsilon^{-1}_k \Delta F,\quad 
\abs{\mathbf{B}_k}^2 = \varepsilon^{-2}_k \abs{\mathbf{B}}^2.
\end{align*}
Thus
\begin{align*}
    \abs{\mathbf{B}_k}\leq1,\quad\abs{\mathbf{\mathbf{B}}_k\left(x_k,0\right)}=1.
\end{align*}
 Therefore there exists a subsequence of $F_k$, we still denote it by $F_k$, such that $F_k \to F_\infty$ as $k \to \infty$ in any ball $B_R(0) \subset \mathbb{R}^4$, and $F_\infty$ satisfies 
 \begin{align}\label{eqn-limit}
 \frac{\partial F_\infty}{\partial t} = \mathbf{H}_\infty, \quad \abs{\mathbf{B}_{\infty}}\leq 1,   \quad \text{and}\quad \abs{\mathbf{B}_\infty(p, 0)}=1.
 \end{align}
Using the   blow up analysis of the mean curvature flow  (cf. \cite{AreSun13, CheLi01}),  the blow up limit $\tilde{\Sigma}$ is a self-shrinker and complete. By the monotonicity formula, it is easy to see that $\tilde{\Sigma}$ has polynomial volume growth (see \cite[Lemma 2.9 and Corollary 2.13]{ColMin12}),  then by \cite[Theorem 4.1]{CheZho13}, $\tilde{\Sigma}$ is proper.

As the complex phase map is rescaling invariant, we conculde that $J\left(\Sigma_t^k\right)\subset K$ since $J(\Sigma_t) \subset K$. Then by an elementary topology argument, we get $J(\tilde{\Sigma})\subset K$.  Hence  we obtain a complete proper self-shrinker in $\mathbb{R}^4$ with the image of the complex phase map contained in $K$. Applying  \autoref{thm:rigid}, $\tilde{\Sigma}$ must be a plane, which contradicts with $|\mathbf{B}_\infty(p, 0)| > 0$ in  (\ref{eqn-limit}). Thus we complete the proof.
\end{proof}

\begin{rem}

By a similar method of the proof in \autoref{thm:sing}, we can also demonstrate that the following holds:

\begin{quote}
    Let $\Sigma_0$ be a closed hypersurface immersed in Eucildean space $\mathbb{R}^{n+1}$, and  $\Sigma_t \subset \mathbb{R}^{n+1} (t\in [0, T) $ for some $T>0)$ a family of hypersurfaces given by the mean curvature flow. Suppose that the  image of $\Sigma_0$ under the Gauss map is contained in $\mathbb{S}^n\backslash \overline{\mathbb{S}}^{n-1}_{+}$, then the mean curvature flow does not develop any Type \Rmn{1} singularity. 
\end{quote}
 
\end{rem}

\vskip12pt

The following example shows that the restriction on the image of the complex phase map is sharp in \autoref{thm:sing}.

\begin{eg}\label{eg:calabi-tori}
Let $\gamma:\mathbb{S}^1\To\mathbb{C}$ be an immersed curve with $\mathbf{0}\notin\gamma$ and define
\begin{align*}
F:\mathbb{T}^2\To\mathbb{C}^2,\quad(x,y)\to\left(\gamma(x)\cos(y), \gamma(x)\sin(y)\right).
\end{align*}
Then $F$ is a Lagrangian immersion. Since the initial surface is closed, we know that the mean curvature flow always blows up at a finite time (cf. \cite[Proposition 3.10]{Smo12}). 

Denote $\Sigma\coloneqq F\left(\mathbb{T}^2\right)$. Fisrtly, we have the following fact: the Maslov index  of the Lagrangian immersion $\Sigma\To\mathbb{C}^2$ is zero (i.e., $\Sigma$ is of zero-Maslov class) iff 
\begin{align*}
    \mathrm{Ind}_{\gamma}\left(\mathbf{0}\right)=\dfrac{1}{2\pi}\int_{\gamma}\kappa,
\end{align*}
where $\kappa$ is the curvature of the curve $
\gamma$ in $\mathbb{C}$. We will give some more details as follows. The induced metric $g$ on $\Sigma$ is 
\begin{align*}
g=\abs{\gamma'(x)}^2\dif x^2+\abs{\gamma(x)}^2\dif y^2.
\end{align*}
Choose an orientation on $\Sigma$ as following:
\begin{align*}
\dif\mu_{\Sigma}=\abs{\gamma(x)}\abs{\gamma'(x)}\dif x\wedge\dif y.
\end{align*}
Then the pullback of the holomorphic symplectic $2$-form $\Omega$ is
\begin{align}\label{eq:maslov0}
\Omega\vert_{\Sigma}=\gamma(x)\gamma'(x)\dif x\wedge\dif y=\dfrac{\gamma(x)\gamma'(x)}{\abs{\gamma(x)}\abs{\gamma'(x)}}\dif\mu_{\Sigma}=\dfrac{\left(\gamma^2(x)\right)^{'}}{\abs{\left(\gamma^2(x)\right)^{'}}}\dif\mu_{\Sigma}.
\end{align}
Thus, $\Sigma$ is of Maslov class iff the winding number of the curve $x\mapsto \gamma(x)\gamma'(x)$ around  $\mathbf{0}$ is zero iff the degree of the map $x\mapsto \frac{\left(\gamma^2(x)\right)^{'}}{\abs{\left(\gamma^2(x)\right)^{'}}}$ is zero.

For the immersed curve $\gamma$ in $\mathbb{C}$, the unit tangent vector is
\begin{align*}
\vec{e}=\dfrac{\gamma'}{\abs{\gamma'}},
\end{align*}
and the unit outward normal vector is
\begin{align*}
\vec{n}=-\sqrt{-1}\vec{e}=\dfrac{-\sqrt{-1}\gamma'}{\abs{\gamma'}}.
\end{align*}
Thus, the curvature vector is
\begin{align*}
\vec{\kappa}=\bar\nabla_{\vec{e}}\vec{e}=\kappa\vec{n}
\end{align*}
where
\begin{align*}
\kappa=\dfrac{\sqrt{-1}\left(\gamma''\bar\gamma'-\bar\gamma''\gamma'\right)}{2\abs{\gamma'}^2}=- {\rm Im}\left(\ln\gamma'\right)'.
\end{align*}

Notice that the winding number of the curve $\gamma$ around $\mathbf{0}$ is 
\begin{align*}
\mathrm{Ind}_{\gamma}\left(\mathbf{0}\right)=\dfrac{1}{2\pi\sqrt{-1}}\int_{\gamma}\dfrac{\dif\gamma}{\gamma}.
\end{align*}
An immediately consequence is that the degree of the Gauss map $\vec{n}:\mathbb{S}^1\To\mathbb{S}^1$ is
\begin{align*}
\deg{\vec{n}}=-\dfrac{1}{2\pi}\int_{\gamma}\kappa=\dfrac{1}{2\pi\sqrt{-1}}\int_{\gamma}\dfrac{\dif\gamma'}{\gamma'}=\mathrm{Ind}_{\gamma'}\left(\mathbf{0}\right).
\end{align*}
Therefore, according to \eqref{eq:maslov0}, $\Sigma$ is of zero-Maslov class iff 
\begin{align*}
\mathrm{Ind}_{\gamma}\left(\mathbf{0}\right)-\dfrac{1}{2\pi}\int_{\gamma}\kappa\left(\gamma\right)=-\dfrac{1}{2\pi}\int_{\gamma^2}\kappa\left(\gamma^2\right)=0.
\end{align*}

The mean curvature vector $\mathbf{H}$ of $\Sigma$ in $\mathbb{C}^2$ is 
\begin{align*}
\mathbf{H}=&\dfrac{1}{\abs{\gamma(x)}\abs{\gamma'(x)}}\left(\left(\abs{\gamma(x)}\abs{\gamma'(x)}^{-1}\gamma'(x)\right)'\cos(y),\left(\abs{\gamma(x)}\abs{\gamma'(x)}^{-1}\gamma'(x)\right)'\sin(y)\right)\\
&-\dfrac{1}{\abs{\gamma(x)}\abs{\gamma'(x)}}\left(\abs{\gamma(x)}^{-1}\abs{\gamma'(x)}\gamma(x)\cos(y),\abs{\gamma(x)}^{-1}\abs{\gamma'(x)}\gamma(x)\sin(y)\right).
\end{align*}
We compute
\begin{align*}
\dfrac{1}{\abs{\gamma}\abs{\gamma'}}\left(\abs{\gamma}\abs{\gamma'}^{-1}\gamma'\right)'-\dfrac{\gamma}{\abs{\gamma}^2}=&\dfrac{\gamma'}{\abs{\gamma'}^2}\left(\dfrac{\gamma'}{2\gamma}+\dfrac{\bar\gamma'}{2\bar\gamma}+\dfrac{\gamma''}{2\gamma'}-\dfrac{\bar\gamma''}{2\bar\gamma'}\right)-\dfrac{\gamma}{\abs{\gamma}^2}\\
=&\kappa(\gamma)\sqrt{-1}\dfrac{\gamma'}{\abs{\gamma'}}+\dfrac{\gamma'}{\abs{\gamma'}^2}\left(\dfrac{\gamma'}{2\gamma}-\dfrac{\bar\gamma'}{2\bar\gamma}\right)\\
=&\vec{\kappa}(\gamma)-\dfrac{\gamma^{\bot}}{\abs{\gamma}^2}.
\end{align*}
Thus,
\begin{align*}
\mathbf{H}=&\left(\left[\vec{\kappa}(\gamma(x))-\dfrac{\gamma(x)^{\bot}}{\abs{\gamma(x)}^2}\right]\cos(y),\left[\vec{\kappa}(\gamma(x))-\dfrac{\gamma(x)^{\bot}}{\abs{\gamma(x)}^2}\right]\sin(y)\right).
\end{align*}

Therefore the MCF \eqref{eqn-MCF} is reduced to (cf. \cite{Nev07})
\begin{align}\label{eq:csf}
\begin{cases}
\dfrac{\partial\gamma}{\partial t}=\vec{\kappa}(\gamma)-\dfrac{\gamma^{\bot}}{\abs{\gamma}^2},&\mathbb{S}^1\times[0,T);\\
\gamma(\cdot,0)=\gamma_0\left(\cdot\right),&\mathbb{S}^1.
\end{cases}
\end{align}
When $\gamma_0:\mathbb{S}^1\To\mathbb{S}^1, x\mapsto x$, then the  solution of \eqref{eq:csf} is
\begin{align*}
    \gamma(x,t)=2\sqrt{T-t}\gamma_0(x),\quad 0\leq t<T.
\end{align*}
One can check that the Maslov index of $\Sigma_0$ is not zero since 
\begin{align*}
    \mathrm{Ind}_{\gamma_0}\left(\mathbf{0}\right)-\dfrac{1}{2\pi}\int_{\gamma_0}\kappa(\gamma_0)=2.
\end{align*}
In particular, the image of the complex phase map is a great circle. 
Moreover, along the mean curvature flow 
\begin{align*}
    \abs{\mathbf{B}}^2=\dfrac{1}{2(T-t)},\quad\mathbf{H}=-\dfrac{1}{2(T-t)}F,\quad 0\leq t<T.
\end{align*}

\end{eg}

\vspace{2ex}




\end{document}